\theoremstyle{plain}
\newtheorem{theorem}{Theorem}[section]
\newtheorem{lemma}[theorem]{Lemma}
\theoremstyle{definition}
\newtheorem{definition}[theorem]{Definition}
\newtheorem{remark}[theorem]{Remark}
\numberwithin{equation}{section}
\renewcommand{\d}{\textnormal{d}}
\newcommand{\N}{\mathbb{N}}
\newcommand{\R}{\mathbb{R}}
\DeclareMathOperator{\supp}{supp}
\title[]{Local regularity for nonlocal equations with variable exponents}
\author{Jamil Chaker}
\address{Fakult\"at f\"ur Mathematik, Universit\"at Bielefeld, 33615 Bielefeld, Germany}
\email{jchaker@math.uni-bielefeld.de}
\author{Minhyun Kim}
\address{Fakult\"at f\"ur Mathematik, Universit\"at Bielefeld, 33615 Bielefeld, Germany}
\email{minhyun.kim@uni-bielefeld.de}
\subjclass[2020]{35B65, 47G20, 35D30, 35B45, 46E35}
\keywords{local boundedness, H\"older regularity, weak solution, nonlocal equation, fractional Sobolev space, variable exponent, Caccioppoli estimate, De Giorgi iteration}
\thanks{Jamil Chaker gratefully acknowledges funding by the Deutsche Forschungsgemeinschaft (SFB 1283/2 2021 – 317210226). Minhyun Kim gratefully acknowledges funding by the Deutsche Forschungsgemeinschaft (GRK 2235/2 2021 - 282638148).}
\begin{document}

\begin{abstract}
In this paper, we study local regularity properties of minimizers of nonlocal variational functionals with variable exponents and weak solutions to the corresponding Euler--Lagrange equations. We show that weak solutions are locally bounded when the variable exponent $p$ is only assumed to be continuous and bounded. Furthermore, we prove that bounded weak solutions are locally H\"older continuous under some additional assumptions on $p$. On the one hand, the class of admissible exponents is assumed to satisfy a log-H\"older-type condition inside the domain, which is essential even in the case of local equations. On the other hand, since we are concerned with nonlocal problems, we need an additional assumption on $p$ outside the domain.
\end{abstract}

\maketitle


\section{Introduction} \label{sec:introduction}

The aim of this paper is to study the regularity theory for minimizers of the nonlocal variational functional
\begin{equation} \label{eq:F}
\mathcal{F}(u) = \int_{\mathbb{R}^n}\int_{\mathbb{R}^n} \frac{|u(x)-u(y)|^{p(x,y)}}{p(x,y) |x-y|^{n+sp(x,y)}} \,\mathrm{d}y\,\mathrm{d}x
\end{equation}
and for weak solutions to the corresponding Euler--Lagrange equation, where $n \in \mathbb{N}$, $s \in (0,1)$, and $p:\mathbb{R}^n \times \mathbb{R}^n \to \mathbb{R}$ is a continuous function such that
\begin{equation} \label{eq:symmetry}
p(x,y) = p(y,x)
\end{equation}
and
\begin{equation} \label{eq:bounds}
1 < \inf_{x, y \in \mathbb{R}^n} p(x,y) \leq \sup_{x, y \in \mathbb{R}^n} p(x,y) < +\infty.
\end{equation}
This functional is a nonlocal analog of a local variational functional
\begin{equation} \label{eq:F_loc}
\mathcal{F}_{\mathrm{loc}}(u) = \int_{\Omega} \frac{1}{p(x)} |Du(x)|^{p(x)} \,\mathrm{d}x,
\end{equation}
where $\Omega$ is a bounded domain in $\mathbb{R}^n$ and $p: \Omega \to \mathbb{R}$ is a measurable function such that $1 < \inf_{x \in \Omega} p(x) \leq \sup_{x \in \Omega} p(x) < +\infty$. The functional in \eqref{eq:F_loc} was first considered by Zhikov \cite{Zhi86,Zhi93}. The regularity properties for minimizers of \eqref{eq:F_loc} or more general local variational functionals have been established in several works. See for instance \cite{Mar89,Mar91,AF94,Zhi95,Alk97,CPC97,Zhi97,CM99,FZ99,AM01,HKLMP08,HHLN10,Bre12,Ok18,DZZ20} and the references therein.

A function $u \in W^{s,p(\cdot,\cdot)}(\mathbb{R}^n)$ is said to be a {\it minimizer of $\mathcal{F}$ in $\Omega$} if
\begin{equation*}
\mathcal{F}(u) \leq \mathcal{F}(u+\varphi)
\end{equation*}
for any measurable function $\varphi:\mathbb{R}^n \to \mathbb{R}$ supported inside $\Omega$. See \Cref{sec:Sobolev} for the definition of the function space $W^{s,p(\cdot,\cdot)}(\mathbb{R}^n)$. It is standard to show that minimizers of $\mathcal{F}$ in $\Omega$ are weak solutions to the Euler--Lagrange equation
\begin{equation} \label{eq:EL}
(-\Delta)^s_{p(\cdot,\cdot)}u = 0
\end{equation}
in $\Omega$, where $(-\Delta)_{p(\cdot,\cdot)}^{s}$ is the fractional $p(\cdot,\cdot)$-Laplacian defined by
\begin{equation*}
(-\Delta)_{p(\cdot,\cdot)}^{s}u(x) = \mathrm{P.V.} \int_{\mathbb{R}^n} \frac{|u(x)-u(y)|^{p(x,y)-2}(u(x)-u(y))}{|x-y|^{n+sp(x,y)}} \,\mathrm{d}y, \quad x \in \mathbb{R}^n.
\end{equation*}
See \Cref{sec:Caccioppoli} for the precise definition of weak solution.

Before we formulate the assumptions on $p$ and the main results of this paper, let us recall the regularity results for local variational functionals and the corresponding local operators. It is known \cite{FZ99} that minimizers of \eqref{eq:F_loc} in $\Omega$ and weak solutions to the corresponding Euler--Lagrange equation $-\Delta_{p(\cdot)}u=0$ in $\Omega$ are locally bounded in $\Omega$, provided that $p: \Omega \to \mathbb{R}$ is continuous on $\Omega$. Moreover, if the modulus of continuity $\omega$ of $p$ satisfies 
\begin{equation} \label{eq:log-Holder}
\limsup_{R \to 0} \omega(R) \log \left( \frac{1}{R} \right) < +\infty,
\end{equation}
then the minimizers and weak solutions are locally H\"older continuous. The log-H\"older continuity \eqref{eq:log-Holder} is sharp in the sense that regularity properties such as H\"older continuity and even higher integrability fail to hold if the condition \eqref{eq:log-Holder} is violated (see \cite{Zhi97}). Moreover, it is proved \cite{Zhi97} that the functional \eqref{eq:F_loc} exhibits the Lavrentiev phenomenon if and only if the condition \eqref{eq:log-Holder} is dropped. Furthermore, the singular part of the measure representation of relaxed integrals with variable exponent disappears if and only if \eqref{eq:log-Holder} holds (see \cite{ABF03}).

The log-H\"older-type condition \eqref{eq:log-Holder} is equivalent to the condition that there exists a constant $L > 0$ such that
\begin{equation*}
R^{p_-(B_R(x_0)) - p_+(B_R(x_0))} \leq L \quad \text{for all}~ \overline{B_R(x_0)} \subset \Omega,
\end{equation*}
where
\begin{equation*}
p_+(E) := \sup_{x \in E} p(x) \quad\text{and}\quad p_-(E) := \inf_{x \in E} p(x),
\end{equation*}
see \cite{Die04}. It is natural to expect that a similar condition on $p$ is required to obtain H\"older regularity results for the nonlocal variational functional \eqref{eq:F} and the nonlocal equation \eqref{eq:EL}. We introduce the following condition on $p$.

\begin{definition}
We say that a function $p: \mathbb{R}^n \times \mathbb{R}^n \to \mathbb{R}$ satisfies the condition \eqref{eq:P1} in $\Omega$ if there exists a constant $L > 0$ such that
\begin{equation} \label{eq:P1} \tag{P1}
R^{p_-(B \times B) - p_+(B \times B)} \leq L \quad \text{for all}~ \overline{B} = \overline{B_R(x_0)} \subset \Omega,
\end{equation}
where
\begin{equation*}
p_+(E \times F) = \sup_{x \in E, y \in F} p(x,y) \quad\text{and}\quad p_-(E \times F) = \inf_{x \in E, y \in F} p(x,y).
\end{equation*}
\end{definition}

Since we are concerned with nonlocal problems, we also need the information of $p$ outside the domain.

\begin{definition}
We say that a function $p: \mathbb{R}^n \times \mathbb{R}^n \to \mathbb{R}$ satisfies the condition \eqref{eq:P2} in $\Omega$ if
\begin{equation} \label{eq:P2} \tag{P2}
p_+(B \times B^c) \leq p_+(B\times B) \quad\text{and}\quad p_-(B \times B^c) \leq p_-(B\times B) \quad\text{for all}~ \overline{B} = \overline{B_R(x_0)} \subset \Omega.
\end{equation}
\end{definition}

Let us make some comments on the conditions \eqref{eq:P1} and \eqref{eq:P2}.

\begin{remark}{\,  }
\begin{enumerate}[(i)]
\item
Note that the condition \eqref{eq:P1} does not imply that $p$ is log-H\"older continuous as a $2n$-variable function, since $B \times B$ in \eqref{eq:P1} is not a ball with respect to the Euclidean metric in $\mathbb{R}^{2n}$. The condition \eqref{eq:P1} is actually weaker than the log-H\"older continuity of $p$. Let us first prove that the log-H\"older continuity of $p$ implies \eqref{eq:P1}. If $p$ is log-H\"older continuous, that is,
\begin{equation*}
|p(x_1, y_1)-p(x_2, y_2)| \leq \frac{C}{-\log \sqrt{|x_1-x_2|^2 + |y_1-y_2|^2}}
\end{equation*}
for all $(x_1, y_1), (x_2, y_2) \in \Omega \times \Omega$ with $\sqrt{|x_1-x_2|^2 + |y_1-y_2|^2} \leq 1/2$, then
\begin{equation*}
|p_-(B \times B) - p_+(B \times B)| \leq \frac{C}{-\log (2\sqrt{2}R)}
\end{equation*}
for any $\overline{B} = \overline{B_R(x_0)} \subset \Omega$ with $R \leq 1/8$. Thus, 
\begin{equation*}
R^{p_-(B\times B) - p_+(B\times B)} \leq R^{C/\log(2\sqrt{2}R)} = \exp\left( C\frac{\log R}{\log(2\sqrt{2}R)} \right) \leq e^{2C}.
\end{equation*}
If $R > 1/8$, then $R^{p_-(B\times B) - p_+(B\times B)} \leq 8^{p_+(B\times B) - p_-(B\times B)} \leq 8^{\|p\|_\infty}$. Therefore, \eqref{eq:P1} is proved for any $R > 0$.

Let us next provide an example of $p$ that is not log-H\"older continuous, but satisfies the condition \eqref{eq:P1}. The example will be given in $\mathbb{R} \times \mathbb{R}$, but it can be easily extended to $\mathbb{R}^n \times \mathbb{R}^n$. Let $\omega$ be a modulus of continuity that is smooth, bounded, concave, increasing, and satisfies
\begin{equation} \label{eq:mc}
\lim_{R \to 0} \frac{1}{-\log R}\frac{1}{\omega(R)} = 0.
\end{equation}
Define $p(x,y) = |x| \omega(|y|)$, then $p$ is clearly not log-H\"older continuous by \eqref{eq:mc}. To show that $p$ satisfies \eqref{eq:P1} in $(-1,1)$, let $x, y \in B:= (x_0-R, x_0+R)$ with $R < 1$. Then,
\begin{equation*}
\begin{split}
&p_+:= p_+(B \times B) = (|x_0|+R) \omega(|x_0|+R) \quad\text{and} \\
&p_-:= p_-(B \times B) = 
\begin{cases}
0 &\text{if}~ |x_0| < R, \\
(x_0-R) \omega(x_0-R) &\text{if}~ x_0 \geq R, \\
(x_0+R) \omega(x_0+R) &\text{if}~ x_0 \leq - R.
\end{cases}
\end{split}
\end{equation*}
When $|x_0|<R$, we have
\begin{equation*}
R^{p_--p_+} = R^{-(|x_0|+R)\omega(|x_0|+R)} \leq R^{-2R\omega(2R)} \leq R^{-2\|\omega\|_\infty R} \leq 2\|\omega\|_{\infty}.
\end{equation*}
If $x_0 \geq R$, then by the mean value theorem,
\begin{equation*}
p_+ - p_- = 2R f'(x_\ast)
\end{equation*}
for some $x_\ast \in B$, where $f(t) = t \omega(t)$. Since $f$ is concave and bounded, we have $f'(t) = \omega(t) + t\omega'(t) \leq 2\omega(t) \leq 2\|\omega\|_{\infty}$. Thus, 
\begin{equation*}
R^{p_--p_+} \leq R^{-2Rf'(x_\ast)} \leq R^{-4\|\omega\|_{\infty} R} \leq 4\|\omega\|_{\infty}.
\end{equation*}
The case $x_0 \leq -R$ can be treated in the same way. Therefore, $p$ satisfies \eqref{eq:P1} in $(-1,1)$.

For an explicit example of such $p$, one can consider a modulus of continuity $\omega$ that behaves like $1/\log(-\log R)$ or $\log(\log(1/ R))/(-\log R)$ near zero.

\item
Let us provide a nontrivial example of a function $p: \mathbb{R}^n \times \mathbb{R}^n \to \mathbb{R}$ satisfying \eqref{eq:P1} and \eqref{eq:P2} in $B_1$. Let $\omega$ be given by
\begin{equation*}
\omega(r) = 
\begin{cases}
3-\left( -\dfrac{1}{\log r} \land 1 \right) &\text{if}~ r < \frac{1}{e}, \\
\omega_0(r) &\text{if}~ r \geq \frac{1}{e},
\end{cases}
\end{equation*}
where $\omega_0$ is any non-increasing function such that $\omega_0(1/e)=2$ and $\lim_{r\to\infty}\omega_0(r) > 1$, and define $p: \mathbb{R}^n \times \mathbb{R}^n \to \mathbb{R}$ by $p(x,y)= \omega(|x-y|)$ (see Figure 1). Then, $p$ satisfies \eqref{eq:P1} because $p$ is log-H\"older continuous as a $2n$-variable function. Moreover, for any $\overline{B}=\overline{B_R(x_0)} \subset B_1$, we have
\begin{equation*}
\begin{split}
p_+(B \times B^c) = 3 = p_+(B \times B) \quad\text{and}\quad p_-(B \times B^c) = 2 \leq \omega(2R) = p_-(B \times B).
\end{split}
\end{equation*}
Therefore, $p$ also satisfies \eqref{eq:P2}.

\begin{figure}[H]
\centering
\includegraphics[width=.5\linewidth]{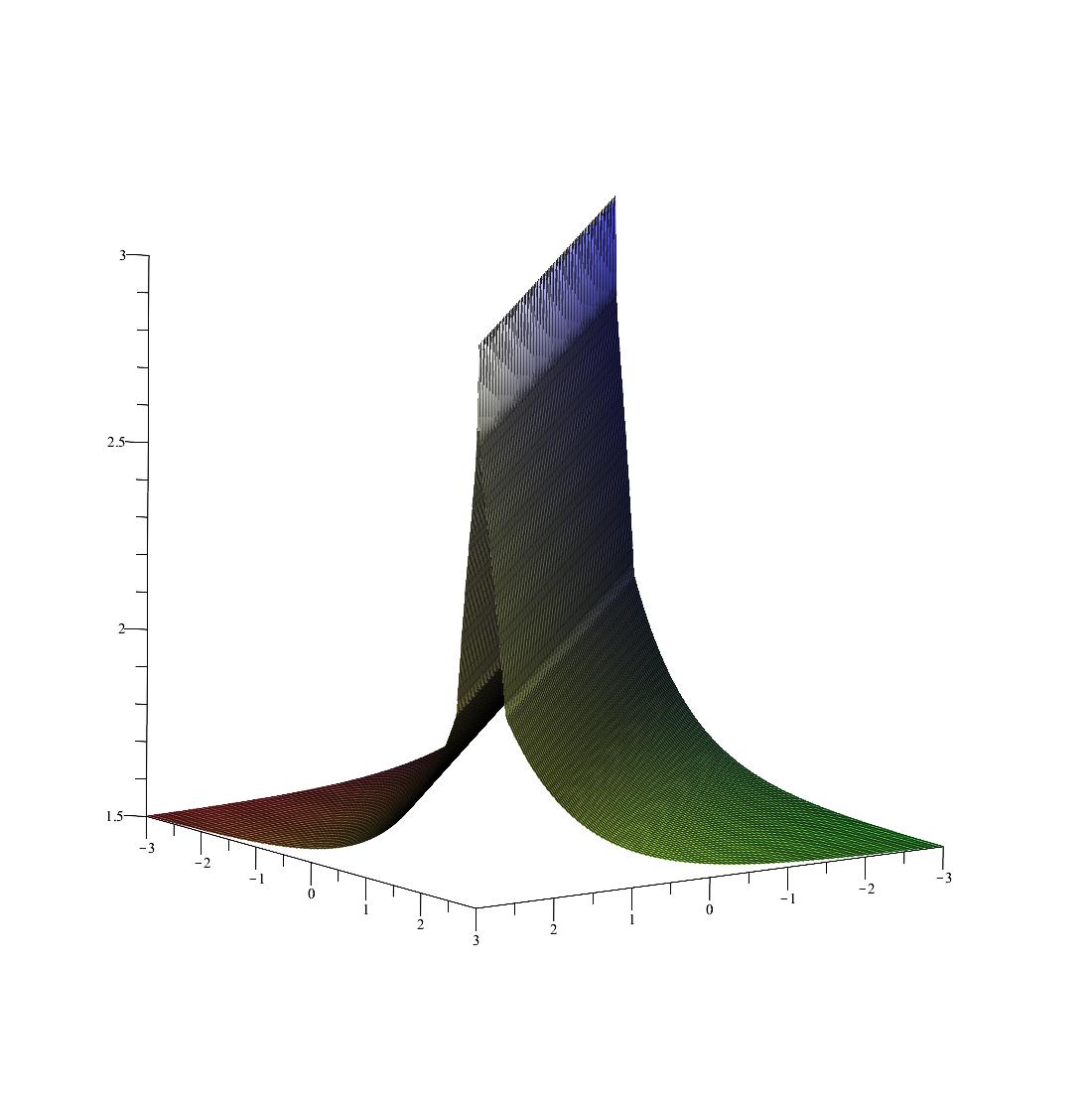}
\caption{Visualization of an example.}
\end{figure}

\item
The conditions \eqref{eq:P1} and \eqref{eq:P2} do not restrict $p$ on $\Omega^c \times \Omega^c$. In fact, for the local regularity results we do not need any information about $p$ on $\Omega^c \times \Omega^c$ except for the global bound \eqref{eq:bounds}. This is because the double integral over $\Omega^c \times \Omega^c$ vanishes whenever we use a cutoff function.
\end{enumerate}
\end{remark}

Let us now present the main results of this paper. The first result is the local boundedness of weak solutions to \eqref{eq:EL}. Throughout the paper, we always assume that $p: \mathbb{R}^n \times \mathbb{R}^n \to \mathbb{R}$ is a continuous function satisfying \eqref{eq:symmetry} and \eqref{eq:bounds}. Note that the following theorem does not require the conditions \eqref{eq:P1} and \eqref{eq:P2}.

\begin{theorem} \label{thm:local_boundedness}
Let $\Omega$ be a bounded domain in $\mathbb{R}^n$. If $u \in W^{s, p(\cdot,\cdot)}(\mathbb{R}^n)$ satisfies
\begin{equation} \label{eq:tail}
\sup_{x\in \Omega} \int_{\mathbb{R}^n} \frac{u_+(y)^{p(x,y)-1}}{1+|y|^{n+sp(x,y)}} \,\mathrm{d}y < +\infty \quad \left( \sup_{x\in \Omega} \int_{\mathbb{R}^n} \frac{u_-(y)^{p(x,y)-1}}{1+|y|^{n+sp(x,y)}} \,\mathrm{d}y < +\infty,~ \text{respectively} \right)
\end{equation}
and is a weak subsolution (supersolution, respectively) to \eqref{eq:EL} in $\Omega$, then $u$ is locally bounded from above (below, respectively) in $\Omega$. If $u \in W^{s, p(\cdot,\cdot)}(\mathbb{R}^n)$ is a weak solution to \eqref{eq:EL} in $\Omega$ satisfying \eqref{eq:tail} with $u_+$ replaced by $|u|$, then $u \in L^\infty_{\mathrm{loc}}(\Omega)$.
\end{theorem}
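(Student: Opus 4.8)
The plan is to run a De~Giorgi iteration. It suffices to treat a weak subsolution $u$ satisfying \eqref{eq:tail} with $u_+$ and to prove that $u$ is bounded above on $B_\rho(x_0)$ whenever $\overline{B_{2\rho}(x_0)}\subset\Omega$ with $\rho$ small; the supersolution case then follows by applying this to $-u$ (for which $(-u)_+=u_-$, so that \eqref{eq:tail} becomes the condition in parentheses), and the statement for weak solutions follows by combining the two one-sided bounds. All constants below are allowed to depend on $n$, $s$, $\rho$, $x_0$ and on $p_\pm:=p_\pm\big(B_{2\rho}(x_0)\times B_{2\rho}(x_0)\big)$; no relation between $\rho$ and $p_+-p_-$ is needed, which is precisely why \eqref{eq:P1} does not enter the hypotheses. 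A preliminary reduction supplies the integrability needed to start the iteration: splitting the Gagliardo double integral of $u$ according to whether $|u(x)-u(y)|$ is larger or smaller than $|x-y|^{s}$ shows $W^{s,p(\cdot,\cdot)}(\R^n)\hookrightarrow W^{s',p_-}(B_{2\rho})$ for every $s'\in(0,s)$, and the classical fractional Sobolev embedding then yields $u\in L^{p_+}(B_{2\rho})$ provided $\rho$ is small enough that $p_+$ is strictly below the Sobolev conjugate of $p_-$ at some order $s'<s$ --- which holds for $\rho$ small by continuity of $p$ (the regime $sp_-\ge n$ being trivial). So we may assume $u_+\in L^{p_+}(B_{2\rho})$.

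Fix $k>0$, to be chosen large at the end, and for $j\ge0$ put $r_j=\rho(1+2^{-j})$, $B_j=B_{r_j}(x_0)$, $k_j=k(1-2^{-j-1})$, $w_j=(u-k_j)_+$, and choose $\tau_j\in C_c^\infty(B_j)$ with $\tau_j\equiv1$ on $B_{j+1}$, $0\le\tau_j\le1$, $|\nabla\tau_j|\lesssim 2^j/\rho$. Applying the Caccioppoli estimate of \Cref{sec:Caccioppoli} to $w_j$ with cutoff $\tau_j$ bounds a Caccioppoli-type energy of $w_j\tau_j$ on $B_j$ by a geometric term, essentially
\begin{equation*}
\iint_{B_j\times B_j}\frac{\max\{w_j(x),w_j(y)\}^{p(x,y)}\,|\tau_j(x)-\tau_j(y)|^{p(x,y)}}{|x-y|^{n+sp(x,y)}}\,\d x\,\d y,
\end{equation*}
plus a nonlocal term of the form $\big(\sup_{x\in\supp\tau_j}\int_{\R^n\setminus B_j}(u(y)-k_j)_+^{p(x,y)-1}|x-y|^{-n-sp(x,y)}\,\d y\big)\int_{B_j}w_j\tau_j^{p(x,\cdot)}\,\d x$. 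For the geometric term I would use $|\tau_j(x)-\tau_j(y)|^{p(x,y)}\le\min\{(2^j|x-y|/\rho)^{p(x,y)},\,2^{p(x,y)}\}$, split at $|x-y|=\rho2^{-j}$ (the radial integrals being finite since $p(x,y)(1-s)>0$), and use $t^{p(x,y)}\le t^{p_-}+t^{p_+}$, reaching $C(2^j/\rho)^{p_+}\int_{B_j}(w_j^{p_-}+w_j^{p_+})\,\d x$. For the nonlocal term, since $\supp\tau_j\subset B_{2\rho}$ I would split $y\in B_{2\rho}\setminus B_j$, where $|x-y|$ is bounded below and the integrand is $\lesssim1+u_+(y)^{p_+}\in L^1(B_{2\rho})$, from $y\notin B_{2\rho}$, where $|x-y|\asymp1+|y|$ and $(u(y)-k_j)_+\le u_+(y)$, so that part is a $\rho$-dependent multiple of the finite quantity in \eqref{eq:tail}; hence the supremum is finite, uniformly in $j$ and in $k\ge0$, while $\int_{B_j}w_j\tau_j^{p(x,\cdot)}\,\d x\le\int_{B_j}w_j\,\d x$, which by $t\le1+t^{p_-}$ is $\lesssim\int_{B_j}w_j^{p_-}\,\d x+|\{u>k_j\}\cap B_j|$.

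Then I would pass to a fixed exponent: the energy above dominates, up to $\rho$-dependent constants and modulo the same kind of domain-splitting in the kernel, the fractional $W^{s,q}(\R^n)$-seminorm of $w_j\tau_j$ to the power $q$, for a fixed $q\in(1,p_-]$ with $sq<n$ --- take $q=p_-$ when $sp_-<n$, so that the Sobolev conjugate $q^\ast=nq/(n-sq)$ exceeds $p_+$ by the choice of $\rho$, and any $q$ with $q^\ast>p_+$ when $sp_-\ge n$ --- whence the classical fractional Sobolev inequality gives $\big(\int_{B_{j+1}}(w_j\tau_j)^{q^\ast}\big)^{q/q^\ast}\lesssim[w_j\tau_j]_{W^{s,q}(\R^n)}^q$. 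Setting $Y_j:=\int_{B_j}w_j^{p_+}\,\d x+|\{u>k_j\}\cap B_j|$ and using $w_j\ge k2^{-j-2}$ on $\{u>k_{j+1}\}$, the elementary bound $t^{p_-}\le1+t^{p_+}$, and Hölder's inequality (licit because $q^\ast>p_+$), one arrives at a recursion
\begin{equation*}
Y_{j+1}\le C\,b^{\,j}\,k^{-\gamma}\,Y_j^{1+\varepsilon},
\end{equation*}
with $C,b>1$ and $\gamma,\varepsilon>0$ depending only on $n,s,\rho,p_\pm$. By the standard fast-geometric-convergence lemma, $Y_j\to0$ once $Y_0=\int_{B_{2\rho}}(u-k/2)_+^{p_+}\,\d x+|\{u>k/2\}\cap B_{2\rho}|$ lies below an explicit threshold; since $u_+\in L^{p_+}(B_{2\rho})$, dominated convergence gives $Y_0\to0$ as $k\to\infty$, so fixing $k$ large enough forces $w_\infty\equiv0$ on $B_\rho(x_0)$, i.e.\ $u\le k$ a.e.\ on $B_\rho(x_0)$.

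The main obstacle is the bookkeeping forced by the variable exponent. Each occurrence of a power $t^{p(x,y)}$ must be split into the two frozen powers $t^{p_-}$ and $t^{p_+}$, and the largest exponent so produced --- $p_+$, which defines $Y_j$ --- must stay strictly below the Sobolev-gained exponent $q^\ast$ for the recursion to have $\varepsilon>0$; arranging $q^\ast>p_+$ is exactly what makes $\rho$ small and is the decisive use of the continuity of $p$. Likewise, both the passage from the $W^{s,p(\cdot,\cdot)}$-energy to a constant-exponent Sobolev estimate and the preliminary fact $u_+\in L^{p_+}(B_{2\rho})$ rest on comparing the kernels $|x-y|^{-n-sp(x,y)}$ and $|x-y|^{-n-sq}$ on a bounded set, which costs only $\rho$-dependent constants but must be carried out by splitting the region of integration rather than by a pointwise inequality. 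None of this uses \eqref{eq:P1} or \eqref{eq:P2}; the sole role of \eqref{eq:tail} is to keep the nonlocal Caccioppoli term finite, uniformly in the truncation levels $k_j$.
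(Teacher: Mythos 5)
Your proposal is correct and follows the same overall blueprint as the paper (Caccioppoli plus De Giorgi iteration, splitting off the trivial regime $sp>n$ via Morrey, using continuity of $p$ to arrange $p_+$ below a Sobolev exponent for small radius), but the technical implementation of the variable-exponent bookkeeping is genuinely different from the paper's, and the comparison is instructive.

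The paper proves an auxiliary embedding (its Lemma~4.3) by applying the variable-exponent H\"older inequality in $L^{p(\cdot,\cdot)/q}\times L^{p(\cdot,\cdot)/(p(\cdot,\cdot)-q)}$ to factor the integrand $\frac{|u(x)-u(y)|^q}{|x-y|^{n+\sigma q}}$. This bounds $[u]_{W^{\sigma,q}}$ by the $W^{s,p(\cdot,\cdot)}$-\emph{seminorm}, and converting seminorm to modular via Lemma~2.3 produces a max of two powers $\tilde\varrho^{q/p_\pm}$; consequently the iteration gives $Y_{j+1}\le C\tilde k^{p_+-q}b^j\max\{Y_j^{1+\beta_1},Y_j^{1+\beta_2}\}$ and needs the generalized fast-convergence Lemma~4.4 with several exponents. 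You instead control the constant-exponent Gagliardo seminorm directly from the \emph{modular} by the pointwise split $|w(x)-w(y)|\gtrless|x-y|^s$: on the super-$|x-y|^s$ region the monotonicity $t\ge1\Rightarrow t^q\le t^{p(x,y)}$ gives pointwise domination by the $p(\cdot,\cdot)$-integrand, and on the complement the kernel is integrable, costing only a level-set volume. This yields $[w_j\tau_j]_{W^{\sigma,q}}^q\lesssim\varrho_{W^{s,p(\cdot,\cdot)}}(w_j\tau_j)+|A_j^+|$, i.e.\ a bound linear in the modular rather than in a fractional power of it; combined with your choice of $Y_j$ including the level-set term, this produces a one-power recursion and the plain single-exponent iteration lemma suffices. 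Another difference: the paper obtains a quantitative sup-bound (its Theorem~4.1, with $\tilde k$ chosen in terms of $Y_0$), which it later reuses in the H\"older argument, whereas your dominated-convergence step ($Y_0\to0$ as $k\to\infty$, the threshold $\sim k^{\gamma/\varepsilon}$ meanwhile growing) gives only the qualitative statement of Theorem~3.1 --- perfectly adequate for this theorem, but not the stronger estimate the paper records en route.

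Two small things to tighten if you write this out: (i) in the Sobolev step you wrote $W^{s,q}$ and $q^\ast=nq/(n-sq)$, but the pointwise split only controls the seminorm at a \emph{lower} order $\sigma<s$, so the gained exponent is $q^\ast=nq/(n-\sigma q)$; you are aware of this in the preliminary reduction (you write $s'<s$ there), and the condition $p_+<np_-/(n-\sigma p_-)$ is still obtainable for small $\rho$, so this is only a notational slip --- but it is exactly the place where the exponent budget is spent, so it must be stated at order $\sigma$. (ii) For the second summand $|A_{k_{j+1},r_{j+1}}^+|$ in $Y_{j+1}$, a bare Chebyshev against $\int w_j^{p_+}$ gives only $Y_{j+1}\lesssim (2^j/k)^{p_+}Y_j$ (linear, no superlinear gain); you must instead Chebyshev against the Sobolev-improved integral $\int(w_j\tau_j)^{q^\ast}$, which yields $|A_{j+1}|\lesssim(2^j/k)^{q^\ast}Y_j^{q^\ast/q}$ and carries the superlinear exponent $q^\ast/q>1$ through. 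With that supplied, the claimed $Y_{j+1}\le Cb^jk^{-\gamma}Y_j^{1+\varepsilon}$ is correct with $\gamma=q^\ast-p_+$ and $\varepsilon=q^\ast/q-1$.
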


As a consequence of \Cref{thm:local_boundedness}, we know that every minimizer of \eqref{eq:F} in $\Omega$ is locally bounded in $\Omega$ since it is a weak solution to \eqref{eq:EL} in $\Omega$.

Indeed, we prove a stronger result than \Cref{thm:local_boundedness}, which provides a quantitative estimate of the supremum $u$, see \Cref{thm:local_boundedness2}. The strategy for the proof of \Cref{thm:local_boundedness} or \Cref{thm:local_boundedness2} is to develop the De Giorgi theory for the nonlocal functional $\mathcal{F}$ with variable exponent. This approach for nonlocal functionals with constant exponent has been studied extensively in the last few years. See for instance \cite{Kas09,CCV11,FK13,KS14,DK20} for the case $p=2$, and \cite{DCKP14,DCKP16,BP16,KMS15,Coz17} for $p > 1$. For a deeper discussion on fractional De Giorgi classes and their applications for the regularity of nonlocal problems, we refer the reader to \cite{Coz19} and the references therein. Analogously, we obtain the Caccioppoli-type estimate that contains terms with variable exponents, and then use the De Giorgi iteration technique to establish \Cref{thm:local_boundedness}. Due to the variable exponent in the Caccioppoli-type estimate, an additional difficulty arises in the De Giorgi iteration that does not occur in the case of the constant exponent. That is, different exponents involving $p_+$ and $p_-$ come into play in the iteration. Thus, the supremum of $u$ is controlled by a maximum of two $L^{p_+}$-norms of $u$ with different powers, and the nonlocal tail term having the variable exponent (see \Cref{thm:local_boundedness2}).

Let us mention that local boundedness of weak solutions to more general problems involving subcritical nonlinearity has been recently settled by Ho and Kim \cite{HK19}. However, their result requires an additional log-H\"older-type assumption on $p$ to cover the subcritical nonlinearity with variable exponent. For the equation \eqref{eq:EL}, this type of additional regularity on $p$ is not necessary.

The second main result is the H\"older continuity of bounded weak solutions to \eqref{eq:EL}.

\begin{theorem} \label{thm:Holder}
Let $\Omega$ be a bounded domain in $\mathbb{R}^n$. Assume that $p$ satisfies \eqref{eq:P1} and \eqref{eq:P2} in $\Omega$. If $u \in W^{s, p(\cdot,\cdot)}(\mathbb{R}^n)$ is a weak solution to \eqref{eq:EL} in $\Omega$ satisfying \eqref{eq:tail}, then $u$ is locally H\"older continuous in $\Omega$.
\end{theorem}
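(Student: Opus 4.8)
The plan is to follow the classical De Giorgi strategy for Hölder continuity, adapted to the nonlocal, variable-exponent setting, by establishing an oscillation decay estimate on a sequence of shrinking balls. Fix a ball $\overline{B_{R_0}(x_0)} \subset \Omega$ and set $B_j = B_{R_j}(x_0)$ with $R_j = \sigma^j R_0$ for a fixed $\sigma \in (0,1)$ to be chosen. Writing $\omega(R_j) = \osc_{B_j} u$ (or rather a quantity that also incorporates the nonlocal tail, as is standard for fractional problems), the goal is to prove $\omega(R_{j+1}) \leq \lambda \, \omega(R_j)$ for some $\lambda \in (0,1)$ uniform in $j$; iterating this yields the decay $\omega(R) \lesssim (R/R_0)^\alpha$ for some $\alpha>0$, which is local Hölder continuity. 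Here \eqref{eq:P1} is what keeps the iteration uniform: when we rescale to a unit ball the constants in the Caccioppoli-type estimate pick up factors like $R^{p_-(B\times B) - p_+(B\times B)}$, and \eqref{eq:P1} bounds these by $L$ independently of the scale. The condition \eqref{eq:P2} ensures that the tail contributions coming from $B \times B^c$ are governed by the exponents on $B \times B$, so that the rescaled tail terms are also controlled.

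The heart of the argument is the measure-to-pointwise step, i.e. a nonlocal version of De Giorgi's lemma: if $u$ is a weak supersolution in $B_j$ with $u \geq 0$ in $B_j$ (after subtracting a constant and possibly replacing $u$ by $\pm u$), and if $u$ is bounded below away from zero on a fixed proportion of $B_{j}$ — say $|\{ u \geq M/2 \} \cap B_{j/2}| \geq \tfrac12 |B_{j/2}|$ where $M$ is of the order of $\omega(R_j)$ — then $u \geq \delta M$ on $B_{j+1}$ for some $\delta>0$. This is proved by a De Giorgi iteration on the truncations $(l_k - u)_+$ with levels $l_k \downarrow \delta M$, using the Caccioppoli estimate from \Cref{sec:Caccioppoli} together with the fractional Sobolev inequality on the variable-exponent space from \Cref{sec:Sobolev}. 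Because the truncated functions are supported (as far as the "bad" set is concerned) inside $B_j$, the double integral over $B_j^c \times B_j^c$ drops out, and the remaining cross terms over $B_j \times B_j^c$ are handled via the tail assumption \eqref{eq:tail} and \eqref{eq:P2}. One then combines this with a standard dichotomy: at scale $j$, either $\{u \geq m_j + \omega(R_j)/2\}$ or $\{u \leq m_j + \omega(R_j)/2\}$ occupies at least half of $B_{j/2}$ (where $m_j = \inf_{B_j} u$); applying the lemma to whichever of $u - m_j$ or $m_j + \omega(R_j) - u$ is large on half the ball improves either the infimum or the supremum on $B_{j+1}$, giving the oscillation reduction $\omega(R_{j+1}) \leq (1-\delta)\,\omega(R_j) + (\text{tail at scale } j)$.

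A recurring technical point, and the main obstacle, is the presence of two competing exponents $p_-$ and $p_+$ throughout: the Caccioppoli estimate produces terms with powers $p_-$ and $p_+$ (exactly the feature already observed for local boundedness in \Cref{thm:local_boundedness2}), and the Sobolev embedding involves the variable Sobolev conjugate. To run a clean geometric iteration one needs the iterative inequality $Y_{k+1} \leq C b^k Y_k^{1+\beta}$ with a single $\beta>0$; here one instead gets something like $Y_{k+1} \leq C b^k (Y_k^{1+\beta_-} + Y_k^{1+\beta_+})$, and one must argue — using that $u$ is already known to be bounded (so $Y_k \leq 1$ after normalization) and that \eqref{eq:P1} makes $\beta_+ - \beta_-$ small relative to fixed thresholds on small balls — that the iteration still converges to zero, at the cost of possibly shrinking $\sigma$ and adjusting the exponent $\alpha$. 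Managing these two-exponent bookkeeping issues uniformly across all scales $R_j$, and checking that the nonlocal tail terms genuinely decay like a positive power of $\sigma^j$ rather than polluting the oscillation decay, is where the bulk of the careful work lies; the assumptions \eqref{eq:P1} and \eqref{eq:P2} are precisely what is needed to push it through.
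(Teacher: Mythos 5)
Your proposal follows the same broad strategy as the paper — Caccioppoli estimate, a measure-to-pointwise growth lemma, a dichotomy at each scale, and then iteration on shrinking balls — and you correctly identify the roles played by \eqref{eq:P1} and \eqref{eq:P2}. However, there is a genuine gap in your description of the growth lemma, and it is precisely the step where the ``measure-to-pointwise'' part actually happens.

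You claim that the growth lemma (nonnegative supersolution with $|\{u \geq H\}\cap B_{R/2}| \geq \gamma|B_{R/2}|$ implies $u\geq \delta H$ on $B_{R/4}$) is ``proved by a De Giorgi iteration on the truncations $(l_k-u)_+$''. That iteration alone is not enough. A De Giorgi iteration of the form $Y_{k+1}\leq C b^k \max_i Y_k^{1+\beta_i}$ converges only if the \emph{initial} level-set measure $Y_0$ is sufficiently small, and the hypothesis of the growth lemma only tells you that the good set occupies a fixed fraction $\gamma$ of the ball --- i.e.\ the bad set has measure at most $(1-\gamma)|B_{R/2}|$, which is not small. The paper's proof of \Cref{lem:growth} is therefore a two-step argument: (i) it first assumes the \emph{additional} smallness condition $|B_{R/2}\cap\{u<2\delta H\}| \leq \tau|B_{R/2}|$ (equation \eqref{eq:vol}) and runs the iteration under that assumption; and (ii) it then \emph{proves} \eqref{eq:vol} from the $\gamma$-measure hypothesis. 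Step (ii) is the crux and requires a separate mechanism: for $s$ bounded away from $0$ it invokes Cozzi's fractional isoperimetric-type inequality (\cite[Proposition 5.1]{Coz17}), summing over a logarithmic chain of levels $k_i = 2^{-i}H$ to force $|B_{R/2}\cap\{u<2\delta H\}|$ down; for small $s$ it instead exploits the cross term $\int w_+\int w_-^{p(\cdot,\cdot)-1}/|x-y|^{n+sp(\cdot,\cdot)}$ in the improved Caccioppoli estimate directly. Your sketch does not contain any analogue of this measure-shrinking step, so as written the growth lemma would not follow.

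Two smaller points. First, you propose to use the ``fractional Sobolev inequality on the variable-exponent space from \Cref{sec:Sobolev}''; the paper deliberately avoids this, because that embedding would require $p_+(B_R\times B_R)<n/s$, which is stronger than what is available ($p(x_0,x_0)\leq n/s$). Instead it uses \Cref{lem:embedding} to pass to a \emph{constant}-exponent space $W^{\sigma,q}$ with $\sigma<s$ and $q<p_-$, and then applies the classical constant-exponent Sobolev inequality; this is what makes the assumption $p_+<p_-^{\ast}=\frac{np_-}{n-\sigma p_-}$ the right smallness condition on the oscillation of $p$. Second, the iteration that eventually appears actually produces \emph{three} competing exponents (from \Cref{lemma:ellest}, $|A|$, $|A|^{1+q/p_--q/p_+}$, and $|A|^{1+q/p_+-q/p_-}$), and the dangerous one is the last: positivity of the resulting $\beta_3$ is equivalent to $p_+<p_-^{\ast}$, not to \eqref{eq:P1}. \eqref{eq:P1} is used for a different purpose, namely to absorb the stray factors $R^{s(p_--p_+)}$, $\ell^{p_--p_+}$, etc., that arise when the $p_+$- and $p_-$-powers are compared. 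Your high-level description of the two-exponent bookkeeping is in the right spirit, but the specific role you assign to \eqref{eq:P1} (``makes $\beta_+-\beta_-$ small'') is not how it is used; it rescues the \emph{constants}, while the structural condition $p_+<p_-^{\ast}$ (achievable by shrinking $R$ by continuity of $p$) rescues the \emph{exponents}.
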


\Cref{thm:Holder} follows from a growth lemma. In order to prove the growth lemma, we need two ingredients: an improved Caccioppoli-type estimate and a fractional De Giorgi isoperimetric-type inequality. The Caccioppoli-type estimate we establish to prove \Cref{thm:local_boundedness} is actually an improved version, which was first introduced by Cozzi \cite{Coz17} for the fractional $p$-Laplacian with $p$ constant (see also \cite{CCV11}). Our Caccioppoli-type estimate not only makes it possible to use the fractional De Giorgi isoperimetric-type inequality as in \cite{Coz17}, but also takes variable exponents into account. 

The proof of \Cref{thm:Holder} is significantly different in the De Giorgi iteration from the one for the case of the constant exponent. As in the proof of \Cref{thm:local_boundedness}, we also encounter different exponents involving $p_+$ and $p_-$ in the De Giorgi iteration. However, this mismatch of exponents causes a more serious problem when we investigate the modulus of continuity of weak solutions. We will see that the assumption \eqref{eq:P1} on $p$ solves this problem. 

Another difference is that the variable exponent in the nonlocal tail term affects the iteration as well. This difficulty does not exist in the local variational problems with variable exponent as well as the nonlocal problem with constant exponent. The variable exponent in mixed regions, which appears in the nonlocal tail, interacts with the variable exponent in local terms. With this regard, the assumption \eqref{eq:P2} on $p$ is required.

Similar results to ours have recently been studied by Jihoon Ok using a different approach, see \cite{Ok21}.

The authors wish to thank Moritz Kassmann from Bielefeld University for stimulating discussions.

\subsection*{Outline}

The article is organized as follows. In \Cref{sec:preliminaries}, we recall the variable exponents Lebesgue spaces, fractional Sobolev spaces with variable exponents, and fractional Sobolev embedding theorems. \Cref{sec:Caccioppoli} is devoted to the proof of the improved Caccioppoli-type estimate with variable exponent, which will be used in the proofs of local boundedness and H\"older regularity for weak solutions. In \Cref{sec:local_boundedness}, we actually prove a stronger assertion \Cref{thm:local_boundedness2} than \Cref{thm:local_boundedness}, which provides a quantitative local estimate on the supremum of weak subsolutions. Finally, we prove \Cref{thm:Holder} in \Cref{sec:Holder} by establishing a growth lemma. This is proved by using the improved Caccioppoli-type estimate and the isoperimetric-type inequality.

\section{Preliminaries} \label{sec:preliminaries}

In this section, we briefly review the variable exponent Lebesgue spaces and fractional Sobolev spaces with variable exponents. Furthermore, we recall the fractional Sobolev embedding theorems for the constant exponent case.

\subsection{Variable exponents Lebesgue spaces} \label{sec:Lebesgue}

Let $\Omega \subset \mathbb{R}^n$ be an open set and let $p: \Omega \to \mathbb{R}$ be a measurable function satisfying
\begin{equation*}
1 < \inf_{x\in\Omega} p(x) \leq \sup_{x\in\Omega} p(x) < +\infty.
\end{equation*}
We define the variable exponent Lebesgue spaces
\begin{equation*}
L^{p(\cdot)}(\Omega) = \left\lbrace u: \Omega \to \mathbb{R} ~\text{measurable}: \varrho_{L^{p(\cdot)}(\Omega)}(u/\lambda) < +\infty ~\text{for some}~ \lambda > 0 \right\rbrace
\end{equation*}
endowed with the norm 
\begin{equation*}
\|u\|_{L^{p(\cdot)}(\Omega)} = \inf \left\lbrace \lambda > 0: \varrho_{L^{p(\cdot)}(\Omega)}(u/\lambda) \leq 1 \right\rbrace,
\end{equation*}
where
\begin{equation*}
\varrho_{L^{p(\cdot)}(\Omega)}(u) = \int_{\Omega} |u(x)|^{p(x)}\,\mathrm{d}x.
\end{equation*}
It is well known that $L^{p(\cdot)}(\Omega)$ is a Banach space, see \cite{KR91,FZ01,DHHR11} for instance. Let us collect some useful inequalities for later use.

\begin{lemma} \label{lem:modular_Lp}
\cite[Theorem 1.3]{FZ01} Let $u \in L^{p(\cdot)}(\Omega)$ and $p_\pm = p_\pm(\Omega)$, then
\begin{enumerate}[(i)]
\item
$\|u\|_{L^{p(\cdot)}(\Omega)} > 1~ (=1; <1)$ if and only if $\varrho_{L^{p(\cdot)}(\Omega)} > 1~ (=1; <1)$;
\item
if $\|u\|_{L^{p(\cdot)}(\Omega)}\geq 1$, then $\|u\|_{L^{p(\cdot)}(\Omega)}^{p_-} \leq \varrho_{L^{p(\cdot)}(\Omega)}(u) \leq \|u\|_{L^{p(\cdot)}(\Omega)}^{p_+}$;
\item
if $\|u\|_{L^{p(\cdot)}(\Omega)} \leq 1$, then $\|u\|_{L^{p(\cdot)}(\Omega)}^{p_+} \leq \varrho_{L^{p(\cdot)}(\Omega)}(u) \leq \|u\|_{L^{p(\cdot)}(\Omega)}^{p_-}$.
\end{enumerate}
\end{lemma}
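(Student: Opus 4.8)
The plan is to reduce everything to one elementary scaling estimate for the modular. Write $\varrho$ for $\varrho_{L^{p(\cdot)}(\Omega)}$ and $\|\cdot\|$ for $\|\cdot\|_{L^{p(\cdot)}(\Omega)}$. Since $\varrho(\lambda v) = \int_{\Omega} \lambda^{p(x)} |v(x)|^{p(x)} \,\d x$ and $\lambda^{p_-} \le \lambda^{p(x)} \le \lambda^{p_+}$ for $\lambda \ge 1$ (with the chain reversed for $0 < \lambda \le 1$), one gets
\[
\lambda^{p_-} \varrho(v) \le \varrho(\lambda v) \le \lambda^{p_+} \varrho(v) \quad (\lambda \ge 1), \qquad \lambda^{p_+} \varrho(v) \le \varrho(\lambda v) \le \lambda^{p_-} \varrho(v) \quad (0 < \lambda \le 1).
\]
This single inequality, applied to suitable dilates of $u$, drives all three parts.

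First I would prove (i). If $\|u\| < 1$, choose $\lambda \in (\|u\|, 1)$ with $\varrho(u/\lambda) \le 1$ and apply the estimate to $u = \lambda (u/\lambda)$ to get $\varrho(u) \le \lambda^{p_-} \varrho(u/\lambda) \le \lambda^{p_-} < 1$. Conversely, if $\varrho(u) < 1$, choose $\lambda \in (\varrho(u)^{1/p_+}, 1)$ and apply the estimate to $u/\lambda = \lambda^{-1} u$ to get $\varrho(u/\lambda) \le \lambda^{-p_+} \varrho(u) \le 1$, so that $\|u\| \le \lambda < 1$. Moreover $\varrho(u) = 1$ forces $\|u\| \le 1$ directly from the definition of the norm (take $\lambda = 1$), while $\|u\| = 1$ forces $\varrho(u) \le 1$ upon letting $\lambda \downarrow 1$ in $\varrho(u/\lambda) \le 1$ and using Fatou's lemma. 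These four implications give all three equivalences in (i) by trichotomy.

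Next I would read off (ii) and (iii) from (i). By absolute homogeneity of the Luxemburg norm, $\bigl\| u/\|u\| \bigr\| = 1$ whenever $0 < \|u\| < \infty$, hence $\varrho(u/\|u\|) = 1$ by part (i). Writing $u = \|u\| \cdot (u/\|u\|)$ and applying the scaling estimate with $\lambda = \|u\|$ — the first branch when $\|u\| \ge 1$, the second when $\|u\| \le 1$ — yields at once $\|u\|^{p_-} \le \varrho(u) \le \|u\|^{p_+}$ in case (ii) and $\|u\|^{p_+} \le \varrho(u) \le \|u\|^{p_-}$ in case (iii). The degenerate case $u = 0$ is trivial since both sides vanish.

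The only genuinely delicate point is the boundary implication $\|u\| = 1 \Rightarrow \varrho(u) \le 1$ in (i): this is the sole place where a limiting argument is required. Fatou's lemma handles it cleanly; alternatively one can invoke continuity of $\lambda \mapsto \varrho(\lambda u)$ on the set $\{\lambda : \varrho(\lambda u) < \infty\}$, which is nonempty by the very definition of $L^{p(\cdot)}(\Omega)$ and on which dominated convergence applies because the integrands $\lambda^{p(x)} |u(x)|^{p(x)}$ are monotone in $\lambda$. Everything else is a direct consequence of the scaling estimate and the homogeneity of the norm.
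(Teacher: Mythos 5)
Your proof is correct. The paper does not prove this lemma itself but quotes it from Fan--Zhao \cite[Theorem 1.3]{FZ01}, and your argument --- the scaling estimate $\lambda^{p_-}\varrho(v)\le\varrho(\lambda v)\le\lambda^{p_+}\varrho(v)$ for $\lambda\ge 1$ (reversed for $\lambda\le 1$), combined with $\varrho\bigl(u/\|u\|\bigr)=1$ and a limiting argument at $\|u\|=1$ --- is precisely the standard proof given in that reference.
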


\begin{lemma} \label{lem:Holder}
\cite[Theorem 2.1]{KR91} For every $u \in L^{p(\cdot)}(\Omega)$ and $v \in L^{p'(\cdot)}(\Omega)$, it holds that
\begin{equation*}
\int_{\Omega} |u(x)v(x)| \,\mathrm{d}x \leq 2 \|u\|_{L^{p(\cdot)}(\Omega)} \|v\|_{L^{p'(\cdot)}(\Omega)},
\end{equation*}
where $1/p(x) + 1/p'(x) = 1$.
\end{lemma}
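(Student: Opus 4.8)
The plan is to derive the inequality from the pointwise Young inequality together with the defining property of the Luxemburg norm, via the standard modular argument. First I would record that $p'(x) = p(x)/(p(x)-1)$ inherits from \eqref{eq:bounds} two-sided bounds strictly between $1$ and $+\infty$, so that $L^{p'(\cdot)}(\Omega)$ and its modular are well defined, and dispose of the degenerate cases $\|u\|_{L^{p(\cdot)}(\Omega)} = 0$ or $\|v\|_{L^{p'(\cdot)}(\Omega)} = 0$. If, say, $\|u\|_{L^{p(\cdot)}(\Omega)} = 0$, then since $\varrho_{L^{p(\cdot)}(\Omega)}(u/\lambda)$ is non-increasing in $\lambda$, one gets $\varrho_{L^{p(\cdot)}(\Omega)}(u/\lambda) \le 1$ for every $\lambda > 0$, and letting $\lambda \to 0^+$ along a sequence, monotone convergence forces $|\{u \ne 0\}| = 0$; hence both sides of the asserted inequality vanish.

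The crucial preliminary step is that a nonzero function of finite norm, normalized to unit norm, has modular at most $1$: given $w \in L^{q(\cdot)}(\Omega)$ with $0 < \|w\|_{L^{q(\cdot)}(\Omega)} < +\infty$, I would choose $\lambda_k \downarrow \|w\|_{L^{q(\cdot)}(\Omega)}$ with $\varrho_{L^{q(\cdot)}(\Omega)}(w/\lambda_k) \le 1$, note that $|w(x)/\lambda_k|^{q(x)}$ increases pointwise to $|w(x)/\|w\|_{L^{q(\cdot)}(\Omega)}|^{q(x)}$, and conclude $\varrho_{L^{q(\cdot)}(\Omega)}\!\left(w/\|w\|_{L^{q(\cdot)}(\Omega)}\right) \le 1$ by monotone convergence. (This is essentially the content of \Cref{lem:modular_Lp}(i), but it is cleaner to argue it directly.)

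With this in hand, put $\tilde u = u/\|u\|_{L^{p(\cdot)}(\Omega)}$ and $\tilde v = v/\|v\|_{L^{p'(\cdot)}(\Omega)}$. Young's inequality yields, for a.e. $x \in \Omega$,
\[
|\tilde u(x)\,\tilde v(x)| \le \frac{|\tilde u(x)|^{p(x)}}{p(x)} + \frac{|\tilde v(x)|^{p'(x)}}{p'(x)} \le |\tilde u(x)|^{p(x)} + |\tilde v(x)|^{p'(x)},
\]
the last inequality because $1/p(x), 1/p'(x) \le 1$; in particular $\tilde u\,\tilde v$ is dominated by an integrable function. Integrating over $\Omega$ and using the preliminary step,
\[
\int_\Omega |\tilde u(x)\,\tilde v(x)|\,\mathrm{d}x \le \varrho_{L^{p(\cdot)}(\Omega)}(\tilde u) + \varrho_{L^{p'(\cdot)}(\Omega)}(\tilde v) \le 2,
\]
and multiplying through by $\|u\|_{L^{p(\cdot)}(\Omega)}\|v\|_{L^{p'(\cdot)}(\Omega)}$ gives the claim.

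The only genuinely delicate point I expect is the preliminary step: because the Luxemburg norm is defined as an infimum, one cannot simply assert that the modular evaluated at the normalizing constant is $\le 1$, so the monotone convergence argument above is needed. Everything else is the pointwise Young inequality followed by an integration, with the constant $2$ (rather than the classical $1$) being exactly the price of bounding $ab$ by the sum of the two modular densities instead of their $p$- and $p'$-weighted average.
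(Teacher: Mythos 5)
The paper does not prove this lemma at all; it states it as a quotation of \cite[Theorem 2.1]{KR91}, so there is no in-paper argument to compare against. Your derivation is the standard modular-normalization proof and it is correct. The genuinely delicate point, which you correctly isolate and handle, is that the Luxemburg norm is defined as an infimum, so one cannot simply assert $\varrho_{L^{q(\cdot)}(\Omega)}(w/\|w\|_{L^{q(\cdot)}(\Omega)})\leq 1$; your monotone-convergence argument along a sequence $\lambda_k \downarrow \|w\|_{L^{q(\cdot)}(\Omega)}$ is exactly the right fix (this is also the usual way to obtain Lemma~\ref{lem:modular_Lp}(i)). The treatment of the degenerate case $\|u\|_{L^{p(\cdot)}(\Omega)}=0$, forcing $u=0$ a.e.\ via monotone convergence, is likewise sound, and dropping the factors $1/p(x),1/p'(x)\leq 1$ in the pointwise Young bound is precisely what produces the constant $2$ (the cited reference records the slightly sharper constant $1/p_-+1/(p')_-$, but the lemma as stated asserts $2$, which your argument delivers). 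No gaps.
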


See \cite{KR91,FZ01,DHHR11,CUF13} for more properties of the variable exponent Lebesgue spaces.

\subsection{Fractional Sobolev spaces with variable exponents} \label{sec:Sobolev}

The fractional Sobolev spaces with variable exponents were first introduced recently by Kaufmann, Rossi, and Vidal \cite{KRV17}, and have been studied in different contexts. See \cite{DPR17,Bah18,BR18,AHKC19,ABS19,BB19,HK19,ABS20,BRW20,BA20,CGA20,ZZ20,ASSA21,ABSS21,BT21,HK21,ZAF21} and references therein. Note that the Triebel--Lizorkin spaces with variable smoothness and integrability have been introduced in \cite{DHR09}, which are isomorphic to $W^{k, p(\cdot)}(\mathbb{R}^n)$ if $k \in \mathbb{N} \cup \lbrace 0 \rbrace$ respectively, the variable exponent Bessel potential space $\mathcal{L}^{\alpha, p(\cdot)}(\mathbb{R}^n)$ for $\alpha > 0$ under suitable assumptions on $p$. In the scope of this paper, we will focus on the fractional Sobolev spaces with variable exponents introduce in \cite{KRV17}.

In this section, let $\Omega$ be a bounded Lipschitz domain in $\mathbb{R}^n$ or $\Omega = \mathbb{R}^n$. Let $p \in C(\overline{\Omega} \times \overline{\Omega})$ be such that $p(x,y) = p(y,x)$ and
\begin{equation*}
1 < p_-(\Omega \times \Omega) \leq p_+(\Omega \times \Omega) < +\infty,
\end{equation*}
and define $\bar{p}(x) = p(x,x)$. For $s \in (0,1)$, the fractional Sobolev space with variable exponents is defined as
\begin{equation*}
W^{s, p(\cdot,\cdot)}(\Omega) := \left\lbrace u \in L^{\bar{p}(\cdot)}(\Omega): \varrho_{W^{s,p(\cdot,\cdot)}(\Omega)}(u/\lambda) < +\infty ~\text{for some}~ \lambda > 0 \right\rbrace,
\end{equation*}
where
\begin{equation*}
\varrho_{W^{s,p(\cdot,\cdot)}(\Omega)}(u) = \int_{\Omega}\int_{\Omega} \frac{|u(x)-u(y)|^{p(x,y)}}{|x-y|^{n+sp(x,y)}} \,\mathrm{d}y\,\mathrm{d}x.
\end{equation*}
We define a seminorm
\begin{equation*}
[u]_{W^{s,p(\cdot,\cdot)}(\Omega)} = \inf \left\lbrace \lambda > 0: \varrho_{W^{s,p(\cdot,\cdot)}(\Omega)}(u/\lambda) \leq 1 \right\rbrace.
\end{equation*}
It is well known \cite{KRV17} that $W^{s, p(\cdot,\cdot)}(\Omega)$ is a Banach space with the norm
\begin{equation*}
\|u\|_{W^{s,p(\cdot,\cdot)}(\Omega)} = \|u\|_{L^{\bar{p}(\cdot)}(\Omega)} + [u]_{W^{s, p(\cdot, \cdot)}(\Omega)}.
\end{equation*}
Let us also define
\begin{equation*}
\tilde{\varrho}_{W^{s,p(\cdot,\cdot)}(\Omega)}(u) = \varrho_{L^{\bar{p}(\cdot)}(\Omega)}(u) + \varrho_{W^{s,p(\cdot,\cdot)}(\Omega)}(u)
\end{equation*}
and a norm
\begin{equation*}
|u|_{W^{s,p(\cdot,\cdot)}(\Omega)} = \inf \left\lbrace \lambda > 0: \tilde{\varrho}_{W^{s,p(\cdot,\cdot)}(\Omega)}(u/\lambda) \leq 1 \right\rbrace.
\end{equation*}
Then, it is clear that two norms $\|u\|_{W^{s,p(\cdot,\cdot)}(\Omega)}$ and $|u|_{W^{s,p(\cdot,\cdot)}(\Omega)}$ are comparable, see \cite{HK19}. It is also easy to obtain the following lemma from the definitions of the norms.

\begin{lemma} \label{lem:modular}
Let $u \in W^{s, p(\cdot,\cdot)}(\Omega)$ and $p_\pm = p_\pm(\Omega \times \Omega)$, then
\begin{enumerate}[(i)]
\item
if $[u]_{W^{s,p(\cdot,\cdot)}(\Omega)} \geq 1$, then $[u]_{W^{s,p(\cdot,\cdot)}(\Omega)}^{p_-} \leq \varrho_{W^{s,p(\cdot,\cdot)}(\Omega)}(u) \leq [u]_{W^{s,p(\cdot,\cdot)}(\Omega)}^{p_+}$;
\item
if $[u]_{W^{s,p(\cdot,\cdot)}(\Omega)} \leq 1$, then $[u]_{W^{s,p(\cdot,\cdot)}(\Omega)}^{p_+} \leq \varrho_{W^{s,p(\cdot,\cdot)}(\Omega)}(u) \leq [u]_{W^{s,p(\cdot,\cdot)}(\Omega)}^{p_-}$;
\item
if $|u|_{W^{s,p(\cdot,\cdot)}(\Omega)} \geq 1$, then $|u|_{W^{s,p(\cdot,\cdot)}(\Omega)}^{p_-} \leq \tilde{\varrho}_{W^{s,p(\cdot,\cdot)}(\Omega)}(u) \leq |u|_{W^{s,p(\cdot,\cdot)}(\Omega)}^{p_+}$;
\item
if $|u|_{W^{s,p(\cdot,\cdot)}(\Omega)} \leq 1$, then $|u|_{W^{s,p(\cdot,\cdot)}(\Omega)}^{p_+} \leq \tilde{\varrho}_{W^{s,p(\cdot,\cdot)}(\Omega)}(u) \leq |u|_{W^{s,p(\cdot,\cdot)}(\Omega)}^{p_-}$.
\end{enumerate}
\end{lemma}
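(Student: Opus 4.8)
The plan is to follow the same pattern as the proof of the corresponding statement for variable exponent Lebesgue spaces (\Cref{lem:modular_Lp}); the only extra ingredient is that both modulars $\varrho_{W^{s,p(\cdot,\cdot)}(\Omega)}$ and $\tilde\varrho_{W^{s,p(\cdot,\cdot)}(\Omega)}$ obey elementary scaling bounds with exponents $p_\pm = p_\pm(\Omega\times\Omega)$.

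First I would record these scaling inequalities. For $v\in W^{s,p(\cdot,\cdot)}(\Omega)$ and $\lambda\ge 1$, the pointwise bound $\lambda^{p_-}\le\lambda^{p(x,y)}\le\lambda^{p_+}$ yields $\lambda^{p_-}\varrho_{W^{s,p(\cdot,\cdot)}(\Omega)}(v)\le\varrho_{W^{s,p(\cdot,\cdot)}(\Omega)}(\lambda v)\le\lambda^{p_+}\varrho_{W^{s,p(\cdot,\cdot)}(\Omega)}(v)$, while for $0<\lambda\le 1$ the inequalities reverse. Since $\bar p(x)=p(x,x)\in[p_-(\Omega\times\Omega),p_+(\Omega\times\Omega)]$, the same bounds hold for $\varrho_{L^{\bar p(\cdot)}(\Omega)}$, hence also for their sum $\tilde\varrho_{W^{s,p(\cdot,\cdot)}(\Omega)}$.

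Next I would use that $\lambda\mapsto\varrho_{W^{s,p(\cdot,\cdot)}(\Omega)}(u/\lambda)$ is non-increasing on $(0,\infty)$, so the defining set $\{\lambda>0:\varrho_{W^{s,p(\cdot,\cdot)}(\Omega)}(u/\lambda)\le 1\}$ is a half-line with left endpoint $[u]_{W^{s,p(\cdot,\cdot)}(\Omega)}$. Thus $\varrho_{W^{s,p(\cdot,\cdot)}(\Omega)}(u/\lambda)\le 1$ for every $\lambda>[u]_{W^{s,p(\cdot,\cdot)}(\Omega)}$, and $\varrho_{W^{s,p(\cdot,\cdot)}(\Omega)}(u/\lambda)>1$ for every $0<\lambda<[u]_{W^{s,p(\cdot,\cdot)}(\Omega)}$ (otherwise such a $\lambda$ would lie in the defining set, contradicting the infimum). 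Now combine: assuming $[u]_{W^{s,p(\cdot,\cdot)}(\Omega)}\ge 1$ and finite and positive, for $\lambda>[u]_{W^{s,p(\cdot,\cdot)}(\Omega)}\ge 1$ the scaling bound with factor $\lambda\ge 1$ gives $\varrho_{W^{s,p(\cdot,\cdot)}(\Omega)}(u)=\varrho_{W^{s,p(\cdot,\cdot)}(\Omega)}(\lambda(u/\lambda))\le\lambda^{p_+}\varrho_{W^{s,p(\cdot,\cdot)}(\Omega)}(u/\lambda)\le\lambda^{p_+}$, and letting $\lambda\downarrow[u]_{W^{s,p(\cdot,\cdot)}(\Omega)}$ yields the upper bound in (i); for $1\le\lambda<[u]_{W^{s,p(\cdot,\cdot)}(\Omega)}$ one gets $1<\varrho_{W^{s,p(\cdot,\cdot)}(\Omega)}(\lambda^{-1}u)\le\lambda^{-p_-}\varrho_{W^{s,p(\cdot,\cdot)}(\Omega)}(u)$, so $\varrho_{W^{s,p(\cdot,\cdot)}(\Omega)}(u)>\lambda^{p_-}$, and $\lambda\uparrow[u]_{W^{s,p(\cdot,\cdot)}(\Omega)}$ gives the lower bound. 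The case $[u]_{W^{s,p(\cdot,\cdot)}(\Omega)}\le 1$ in (ii) is identical, now choosing $\lambda\le 1$ and invoking the reversed scaling inequalities. The degenerate case where the seminorm vanishes forces $\varrho_{W^{s,p(\cdot,\cdot)}(\Omega)}(u)=0$ (as $\varrho_{W^{s,p(\cdot,\cdot)}(\Omega)}(u/\lambda)\ge\lambda^{-p_-}\varrho_{W^{s,p(\cdot,\cdot)}(\Omega)}(u)\to\infty$ as $\lambda\to0^+$ otherwise), so all the inequalities hold trivially. Parts (iii) and (iv) follow verbatim with $\varrho_{W^{s,p(\cdot,\cdot)}(\Omega)}$, $[\,\cdot\,]_{W^{s,p(\cdot,\cdot)}(\Omega)}$ replaced by $\tilde\varrho_{W^{s,p(\cdot,\cdot)}(\Omega)}$, $|\,\cdot\,|_{W^{s,p(\cdot,\cdot)}(\Omega)}$, using the scaling property of $\tilde\varrho_{W^{s,p(\cdot,\cdot)}(\Omega)}$ noted above.

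I do not expect a genuine obstacle: the statement is a routine consequence of the homogeneity-type behaviour of the modulars and the definition of the Luxemburg-type (semi)norm as an infimum. The only points demanding care are keeping track of the direction of the scaling inequalities according to whether the dilation factor is $\ge 1$ or $\le 1$, and separating off the trivial case in which the (semi)norm is zero.
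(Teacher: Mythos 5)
Your argument is correct and is exactly the standard derivation the paper has in mind: the paper omits the proof, remarking only that the lemma follows easily from the definitions in analogy with \Cref{lem:modular_Lp}, and your scaling inequalities for the modulars combined with the Luxemburg-infimum characterization are that argument. The only point worth noting is the boundary case $[u]_{W^{s,p(\cdot,\cdot)}(\Omega)}=1$, where your range $1\le\lambda<[u]_{W^{s,p(\cdot,\cdot)}(\Omega)}$ is empty; the lower bound there follows from the same reversed-scaling step you already use in part (ii), so nothing essential is missing.
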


Recently, the fractional Sobolev embeddings with variable exponents have been studied in \cite{KRV17,HK19,HK21}. However, the fractional Sobolev embeddings with constant exponents are sufficient for the local regularity theory with variable exponents. Let us recall the following embedding theorems for constant exponent fractional Sobolev spaces.

\begin{theorem} \label{thm:sobolev} \cite[Theorem 6.7]{DNPV12}
Let $\Omega \subset \mathbb{R}^n$ be a bounded Lipschitz domain. Let $s \in (0,1)$ and $p \in [1, n/s)$. Then there exists a constant $C = C(n, s, p, \Omega) > 0$ such that, for any $u \in W^{s, p}(\Omega)$, we have
\begin{equation*}
\|u\|_{L^q(\Omega)} \leq C \|u\|_{W^{s, p}(\Omega)}
\end{equation*}
for any $q \in [1, np/(n-sp)]$.
\end{theorem}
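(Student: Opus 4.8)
The statement is quoted from \cite[Theorem 6.7]{DNPV12}; here we only indicate the strategy one would follow. Set $p^* = np/(n-sp)$, which is finite and satisfies $p^* > p$ because $sp < n$. The plan is threefold. First, establish the homogeneous fractional Sobolev inequality on the whole space, namely $\|v\|_{L^{p^*}(\R^n)} \le C(n,s,p)\,[v]_{W^{s,p}(\R^n)}$ for $v \in C_c^\infty(\R^n)$. Then transfer it to the bounded Lipschitz domain $\Omega$ by means of a $W^{s,p}$-extension operator. Finally, deduce the subcritical exponents $q < p^*$ from H\"older's inequality on the bounded set $\Omega$.

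For the first and main step one would argue by a ball-averaging and balancing argument. Fix $v \in C_c^\infty(\R^n)$ and $x \in \R^n$, and for every $R > 0$ write $|v(x)| \le |v(x) - v_{B_R(x)}| + |v_{B_R(x)}|$, where $v_{B_R(x)}$ denotes the average of $v$ over $B_R(x)$. Estimating the first term by the mean of $|v(x)-v(y)|$ over $B_R(x)$ and applying H\"older's inequality in $y$ against the kernel $|x-y|^{-(n+sp)}$ gives $|v(x)-v_{B_R(x)}| \le C\,R^s\,E(x)^{1/p}$, where $E(x) := \int_{\R^n}|v(x)-v(y)|^p|x-y|^{-(n+sp)}\,\mathrm{d}y$ is finite for almost every $x$ since $\int_{\R^n}E(x)\,\mathrm{d}x = [v]_{W^{s,p}(\R^n)}^p$; applying H\"older's inequality in $y$ with exponent $p^*$ to the second term gives $|v_{B_R(x)}| \le C\,R^{-n/p^*}\|v\|_{L^{p^*}(\R^n)}$. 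Choosing $R$ so as to balance these two quantities and using the elementary identity $s + n/p^* = n/p$, one obtains the pointwise bound $|v(x)| \le C\,\|v\|_{L^{p^*}(\R^n)}^{sp/n}\,E(x)^{(n-sp)/(np)}$. Raising this to the power $p^*$ and integrating in $x$, the powers of $\|v\|_{L^{p^*}(\R^n)}$ on the two sides combine so that exactly $[v]_{W^{s,p}(\R^n)}^p$ survives on the right, and dividing by the finite nonzero factor $\|v\|_{L^{p^*}(\R^n)}^{p^*-p}$ yields the claimed inequality; the general $v \in W^{s,p}(\R^n)$ follows by truncation and density. This step is the one we expect to be the main obstacle, since one must keep careful track of the scaling exponents so that the critical exponent $p^*$ emerges, and must dispose of the degenerate cases $\|v\|_{L^{p^*}(\R^n)} = 0$ and $E(x) = 0$ separately (by letting $R \to \infty$).

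For the second step, since $\Omega$ is a bounded Lipschitz domain there is a bounded linear extension operator $E_\Omega : W^{s,p}(\Omega) \to W^{s,p}(\R^n)$ with $\|E_\Omega u\|_{W^{s,p}(\R^n)} \le C(n,s,p,\Omega)\|u\|_{W^{s,p}(\Omega)}$, see \cite[Theorem 5.4]{DNPV12}. Fixing a cutoff $\eta \in C_c^\infty(\R^n)$ with $\eta \equiv 1$ on $\Omega$ and setting $w := \eta\,E_\Omega u$, a routine product estimate for Gagliardo seminorms gives $[w]_{W^{s,p}(\R^n)} \le C\,\|E_\Omega u\|_{W^{s,p}(\R^n)}$, while $w$ has compact support and coincides with $u$ on $\Omega$. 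Applying the first step to $w$ and restricting to $\Omega$ gives $\|u\|_{L^{p^*}(\Omega)} \le \|w\|_{L^{p^*}(\R^n)} \le C\,[w]_{W^{s,p}(\R^n)} \le C(n,s,p,\Omega)\,\|u\|_{W^{s,p}(\Omega)}$. Finally, for any $q \in [1,p^*]$, H\"older's inequality on the bounded set $\Omega$ yields $\|u\|_{L^q(\Omega)} \le |\Omega|^{1/q-1/p^*}\|u\|_{L^{p^*}(\Omega)}$, and combining this with the previous estimate completes the argument.
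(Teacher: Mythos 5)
The paper does not give a proof of this statement: it is cited verbatim from \cite[Theorem 6.7]{DNPV12}, so there is no ``paper's own proof'' to compare against. Your sketch is nevertheless a correct outline of the result, and its details check out. In the main step you use a Hedberg-type potential argument: bound $|v(x)|$ by the oscillation term $R^s E(x)^{1/p}$ plus the averaged term $R^{-n/p^*}\|v\|_{L^{p^*}}$, balance in $R$ (the exponent identity $s+n/p^*=n/p$ is exactly what makes the balancing close), raise to the power $p^*$, and absorb the factor $\|v\|_{L^{p^*}}^{p^*-p}$; this matches the scaling $sp\,p^*/n = p^*-p$ and $p^*(n-sp)/(np)=1$, so the argument is self-consistent. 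You also correctly flag the degenerate cases $E(x)=0$ and $\|v\|_{L^{p^*}}=0$. The subsequent extension and H\"older steps are the standard reduction used in \cite{DNPV12} as well. Worth noting is that the route you take through the homogeneous inequality on $\R^n$ is genuinely different from the one in \cite{DNPV12}: their proof of the whole-space inequality (Theorem 6.5 there) proceeds by a combinatorial decomposition via superlevel sets and an elementary lower bound on $\int_{A^c}|x-y|^{-(n+sp)}\,\d y$, avoiding pointwise potential estimates altogether. The two approaches buy roughly the same thing here; the level-set argument in \cite{DNPV12} is more elementary and avoids maximal-function-style manipulations, while your balancing argument is shorter to write and more transparent about where the critical exponent $p^*$ comes from. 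One small caution: your cutoff step requires the product estimate $[\eta\,E_\Omega u]_{W^{s,p}(\R^n)} \le C\|E_\Omega u\|_{W^{s,p}(\R^n)}$ (not just the seminorm on the right), since the term $\int|v(y)|^p\int|\eta(x)-\eta(y)|^p|x-y|^{-n-sp}\,\d x\,\d y$ produces an $L^p$ contribution; you write the right-hand side with the full norm a sentence later, so this is consistent, but the intermediate display should also show the full $W^{s,p}$ norm rather than the seminorm alone.
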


\begin{theorem} \label{thm:homo_sobolev} \cite[Corollary 4.9]{Coz17}
Let $s \in (0,1)$, $p \in [1, n/s)$, and $R > 0$. Let $u \in W^{s, p}_0(B_R)$ and suppose that $u=0$ on a set $\Omega_0 \subset B_R$ with $|\Omega_0| \geq \gamma |B_R|$ for some $\gamma \in (0,1]$. Then,
\begin{equation*}
\|u\|_{L^{\frac{np}{n-sp}}(B_R)} \leq C [u]_{W^{s, p}(B_R)}
\end{equation*}
for some $C = C(n, s, p, \gamma) > 0$.
\end{theorem}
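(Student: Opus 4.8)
The plan is to derive this homogeneous Sobolev inequality from the inhomogeneous embedding \Cref{thm:sobolev} by absorbing the lower-order term $\|u\|_{L^p(B_R)}$ through a fractional Poincaré inequality that exploits the vanishing of $u$ on $\Omega_0$, and to remove the $R$-dependence of the constant by a scaling argument. First I would reduce to the case $R=1$ (a translation also being harmless): setting $v(x):=u(Rx)$ one has $v\in W^{s,p}_0(B_1)$ with $v=0$ on $R^{-1}\Omega_0\subset B_1$, a set still of relative measure at least $\gamma$, and the substitution $y=Rx$ gives $\|v\|_{L^{p^\ast}(B_1)}=R^{-n/p^\ast}\|u\|_{L^{p^\ast}(B_R)}$ and $[v]_{W^{s,p}(B_1)}=R^{\,s-n/p}[u]_{W^{s,p}(B_R)}$, where $p^\ast:=np/(n-sp)$. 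Since $n/p^\ast=n/p-s$, these two powers of $R$ coincide, so the inequality on $B_R$ with a constant depending only on $n,s,p,\gamma$ is equivalent to the same inequality on $B_1$.

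The key auxiliary estimate is the Poincaré-type bound $\|u\|_{L^p(B_1)}\le C(n,s,p,\gamma)\,[u]_{W^{s,p}(B_1)}$. To prove it, fix $x\in B_1$; since $u(z)=0$ for $z\in\Omega_0$ we have $|u(x)|^p=|u(x)-u(z)|^p$ for every such $z$, so averaging over $\Omega_0$ and using $|\Omega_0|\ge\gamma|B_1|$ together with $|x-z|\le 2$ for $x,z\in B_1$ gives
\[
|u(x)|^p=\frac{1}{|\Omega_0|}\int_{\Omega_0}|u(x)-u(z)|^p\,\d z\le\frac{2^{n+sp}}{\gamma|B_1|}\int_{B_1}\frac{|u(x)-u(z)|^p}{|x-z|^{n+sp}}\,\d z .
\]
Integrating in $x$ over $B_1$ and recognizing the Gagliardo seminorm on the right-hand side yields the claimed Poincaré inequality.

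To conclude, I would note that $p\ge 1$ and $sp<n$ force $p^\ast=np/(n-sp)>p\ge 1$, so \Cref{thm:sobolev} applies to the bounded Lipschitz domain $B_1$ with exponent $q=p^\ast$, giving
\[
\|u\|_{L^{p^\ast}(B_1)}\le C(n,s,p)\bigl(\|u\|_{L^p(B_1)}+[u]_{W^{s,p}(B_1)}\bigr).
\]
Substituting the Poincaré inequality of the previous step into the right-hand side bounds it by $C(n,s,p,\gamma)[u]_{W^{s,p}(B_1)}$, and undoing the rescaling of the first step produces the asserted estimate on $B_R$ with the same constant.

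As for the main obstacle: there is no genuinely hard step. The only ingredient not already contained in \Cref{thm:sobolev} is the Poincaré inequality for functions vanishing on a set of positive proportion, and that is elementary; the sole point requiring care is the bookkeeping of the scaling identities in the first step, which is precisely what guarantees that the final constant is independent of $R$ (and of the center of the ball).
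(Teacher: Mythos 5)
Your proof is correct; since the paper merely cites this result from Cozzi without reproducing an argument, there is no in-paper proof to compare against, but your three-step route (rescale to $B_1$, prove the fractional Poincar\'e inequality $\|u\|_{L^p(B_1)}\leq C(n,s,p,\gamma)[u]_{W^{s,p}(B_1)}$ by averaging over the vanishing set $\Omega_0$ and using $|x-z|\leq 2$, then feed this into \Cref{thm:sobolev}) is precisely the standard way this corollary is obtained, and it matches the structure of the cited source. The scaling bookkeeping is right: $n/p^\ast=n/p-s$ makes the powers of $R$ on the two sides cancel, and after passing to $B_1$ the constant from \Cref{thm:sobolev} depends only on $n,s,p$, so the final constant is $C(n,s,p,\gamma)$ as claimed.
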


\begin{theorem} \label{thm:morrey} \cite[Theorem 8.2]{DNPV12}
Let $\Omega \subset \mathbb{R}^n$ be a bounded Lipschitz domain. Let $s \in (0,1)$ and $p > n/s$. Then there exists a constant $C = C(n, s, p, \Omega) > 0$ such that, for any $u \in L^p(\Omega)$, we have
\begin{equation*}
\|u\|_{C^{\alpha}(\overline{\Omega})} \leq C \|u\|_{W^{s, p}(\Omega)},
\end{equation*}
where $\alpha = (sp-n)/p$.
\end{theorem}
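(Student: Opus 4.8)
The plan is to run the classical Morrey--Campanato argument in the fractional setting: show that the mean oscillation of $u$ over balls decays like $r^{\alpha}$ with $\alpha = s - n/p = (sp-n)/p$, and then deduce Hölder continuity from this decay. We may assume $u \in W^{s,p}(\Omega)$, otherwise there is nothing to prove. Since $\Omega$ is a bounded Lipschitz domain, I would first extend $u$ to a function $Eu \in W^{s,p}(\mathbb{R}^n)$ with $\|Eu\|_{W^{s,p}(\mathbb{R}^n)} \le C(n,s,p,\Omega)\|u\|_{W^{s,p}(\Omega)}$, using that a bounded fractional extension operator exists for Lipschitz domains; it then suffices to estimate $Eu$ on balls in $\mathbb{R}^n$ and restrict at the end. (Alternatively one argues directly on $\Omega$, using only the measure density bound $|B_r(x)\cap\Omega|\ge c\,r^n$ for $x\in\overline{\Omega}$ and small $r$, which is exactly what the Campanato step needs.)

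For the oscillation estimate, write $(u)_B = |B|^{-1}\int_B u$. For a ball $B = B_r(x_0)$, a standard manipulation (convexity of $|\cdot|$ together with Fubini) gives $|B|^{-1}\int_B |u-(u)_B| \le |B|^{-2}\int_B\int_B |u(x)-u(y)|\,\d x\,\d y$. I would insert the weight $|x-y|^{(n+sp)/p}$ and apply Hölder's inequality with exponents $p$ and $p/(p-1)$ to the double integral:
\begin{equation*}
\int_B\int_B |u(x)-u(y)|\,\d x\,\d y \le [u]_{W^{s,p}(B)}\left(\int_B\int_B |x-y|^{\frac{n+sp}{p-1}}\,\d x\,\d y\right)^{\frac{p-1}{p}}.
\end{equation*}
Since $|x-y|\le 2r$ on $B\times B$, the last integral is at most $|B|^2(2r)^{(n+sp)/(p-1)}$; collecting the powers of $r$ (with $|B|\simeq r^n$ and $[u]_{W^{s,p}(B)}\le[u]_{W^{s,p}(\mathbb{R}^n)}$) yields $|B|^{-1}\int_B|u-(u)_B| \le C(n,s,p)\,[u]_{W^{s,p}(\mathbb{R}^n)}\,r^{(sp-n)/p}$. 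Here $\alpha = (sp-n)/p\in(0,1)$, since $sp>n$ forces $\alpha>0$ and $s<1$ forces $\alpha<1$.

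From this decay I would extract the Hölder representative in the usual way: for fixed $x_0$, the dyadic telescoping sum shows $(u)_{B_{2^{-k}r}(x_0)}$ is Cauchy with $|(u)_{B_{2^{-k}r}(x_0)} - (u)_{B_{2^{-k-1}r}(x_0)}| \lesssim [u]_{W^{s,p}}(2^{-k}r)^{\alpha}$, so its limit $\tilde u(x_0)$ defines a representative of $u$; comparing the averages of two points over a common ball of radius $2|x_0-y_0|$ gives $|\tilde u(x_0)-\tilde u(y_0)| \le C[u]_{W^{s,p}}|x_0-y_0|^{\alpha}$, i.e.\ $[\tilde u]_{C^{\alpha}}\le C[u]_{W^{s,p}}$. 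Finally, since $\Omega$ is bounded, $|\tilde u(x)| \le |\tilde u(x)-(u)_\Omega| + |(u)_\Omega| \le [\tilde u]_{C^{\alpha}(\overline{\Omega})}(\operatorname{diam}\Omega)^{\alpha} + |\Omega|^{-1/p}\|u\|_{L^p(\Omega)}$ by Hölder's inequality, and adding this to the seminorm bound yields $\|u\|_{C^{\alpha}(\overline{\Omega})}\le C(n,s,p,\Omega)\|u\|_{W^{s,p}(\Omega)}$.

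\textbf{Expected main obstacle.} The oscillation computation is elementary; the real content lies in the two infrastructure pieces. First, constructing (or citing) the bounded extension operator $W^{s,p}(\Omega)\to W^{s,p}(\mathbb{R}^n)$ for a Lipschitz domain — or, if one avoids extension, carrying the Campanato argument uniformly up to $\partial\Omega$ via the measure density of $\Omega$, where one must check that $B_r(x)\cap\Omega$ behaves like a full ball with constants independent of $x\in\overline{\Omega}$. Second, the Campanato lemma converting $r^{\alpha}$-decay of mean oscillation into a genuinely Hölder-continuous representative: the telescoping estimate must be arranged so that the limit is well defined everywhere and the constant is tracked. These are precisely the places where the Lipschitz hypothesis and the restriction $\alpha\in(0,1)$ are used.
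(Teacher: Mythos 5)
The paper does not prove this statement — it is quoted verbatim from \cite[Theorem 8.2]{DNPV12} as part of the preliminaries. Your Morrey--Campanato argument (mean-oscillation decay of order $r^{\alpha}$ via H\"older applied to the Gagliardo kernel, followed by the telescoping step to produce a H\"older representative, with the Lipschitz hypothesis entering either through an extension operator or through measure density of $\Omega$) is correct and is essentially the same argument carried out in that reference, so there is nothing to reconcile.
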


\section{Caccioppoli-type estimate} \label{sec:Caccioppoli}

This section is devoted to the Caccioppoli-type estimate for weak subsolutions and supersolutions to \eqref{eq:EL}. Let us first provide the definitions of weak subsolutions and supersolutions.

\begin{definition}
A function $u \in W^{s, p(\cdot,\cdot)}(\mathbb{R}^n)$ is a weak subsolution (weak supersolution, respectively) to \eqref{eq:EL} in $\Omega$ if
\begin{equation*}
\int_{\mathbb{R}^n} \int_{\mathbb{R}^n} \frac{|u(x)-u(y)|^{p(x,y)-2}(u(x)-u(y))(\varphi(x)-\varphi(y))}{|x-y|^{n+sp(x, y)}} \,\mathrm{d}y \,\mathrm{d}x \leq 0 \quad (\geq 0, ~\text{respectively})
\end{equation*}
for every nonnegative $\varphi \in W^{s, p(\cdot,\cdot)}(\mathbb{R}^n)$ such that $\varphi = 0 ~\mathrm{a.e.}$ outside $\Omega$. A function $u \in W^{s,p(\cdot, \cdot)}(\mathbb{R}^n)$ is a weak solution to \eqref{eq:EL} in $\Omega$ if it is a weak subsolution and supersolution.
\end{definition}

The Caccioppoli-type estimate is a key ingredient for the local regularity results. This type of estimate has been established by many authors (see for instance \cite{DCKP16,KMS15,BP16,Coz17}) for the case of the fractional $p$-Laplacian with a constant $p > 1$. The main difference between Caccioppoli-type estimates for the local and nonlocal operators is that the estimate for the nonlocal operator involves a nonlocal tail term. Moreover, in \cite{Coz17}, Cozzi improved the estimate to take an isoperimetric-type inequality into account. In this section, we generalize Cozzi's estimate to the fractional $p(\cdot,\cdot)$-Laplacian.

\begin{theorem} \label{thm:Caccioppoli}
Let $\Omega \subset \R^n$ be a bounded domain. Let $u \in W^{s,p(\cdot, \cdot)}(\mathbb{R}^n)$ be a weak subsolution to \eqref{eq:EL} in $\Omega$. Then, for any $B_r(x_0) \Subset B_R(x_0) \subset \Omega$ and any $k \in \mathbb{R}$,
\begin{equation} \label{eq:Caccioppoli}
\begin{split}
&\varrho_{W^{s,p(\cdot,\cdot)}(B_r(x_0))}(w_+) + \int_{B_r(x_0)} w_+(x) \int_{B_R(x_0)} \frac{w_-(y)^{p(x,y)-1}}{|x-y|^{n+sp(x,y)}} \,\mathrm{d}y \,\mathrm{d}x \\
&\leq C \int_{B_R(x_0)} \int_{B_R(x_0)} \left| \frac{w_+(x)}{R-r} \right|^{p(x,y)} \frac{\mathrm{d}y \,\mathrm{d}x}{|x-y|^{n-(1-s)p(x,y)}} \\
&\quad + C \left( \sup_{x \in B_{\frac{R+r}{2}}(x_0)} \int_{\R^n \setminus B_R(x_0)} \frac{w_+(y)^{p(x,y)-1}}{|y-x_0|^{n+sp(x,y)}} \left( \frac{2R}{R-r} \right)^{n+sp(x,y)} \, \d y \right) \int_{B_R(x_0)} w_+(x) \,\mathrm{d}x,
\end{split}
\end{equation}
where $w_\pm := (u-k)_\pm$. The constant $C$ depends only on $p_\pm(B_R(x_0) \times B_R(x_0))$.
\end{theorem}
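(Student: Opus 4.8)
The plan is to test the weak subsolution inequality with the admissible function $\varphi = \eta^{p_+} w_+$, where $w_+ = (u-k)_+$, $p_+ = p_+(B_R(x_0) \times B_R(x_0))$, and $\eta \in C_c^\infty(B_{(R+r)/2}(x_0))$ is a standard cutoff with $\eta \equiv 1$ on $B_r(x_0)$, $0 \leq \eta \leq 1$, and $|\nabla \eta| \lesssim (R-r)^{-1}$. Plugging this into the definition and splitting the domain of integration as $(B_R \times B_R) \cup (B_R \times B_R^c) \cup (B_R^c \times B_R) \cup (B_R^c \times B_R^c)$, the last piece vanishes since $\varphi$ is supported in $B_{(R+r)/2}$, and by symmetry of the kernel and of $p$ the two mixed regions contribute equally. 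This produces an identity of the form $\mathcal{I}_{\mathrm{loc}} + 2\mathcal{I}_{\mathrm{mix}} \leq 0$.

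First I would treat the local term $\mathcal{I}_{\mathrm{loc}}$, which is the double integral over $B_R \times B_R$ of $|u(x)-u(y)|^{p(x,y)-2}(u(x)-u(y))(\eta(x)^{p_+}w_+(x) - \eta(y)^{p_+}w_+(y)) |x-y|^{-n-sp(x,y)}$. The standard algebraic approach splits into the cases where both $x,y$ lie in $\{u>k\}$, where one of them does, and where both lie in $\{u \le k\}$; in each case one seeks a pointwise lower bound of the integrand by a constant multiple of $\eta$-weighted powers of $|w_+(x)-w_+(y)|$ (controlling the "good" term from below) minus a term of the form $(\max\{w_+(x),w_+(y)\})^{p(x,y)} |\eta(x)-\eta(y)|^{p(x,y)}$ (the "error" term). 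The key algebraic lemma is the elementary inequality, valid for $a,b \ge 0$ and $\sigma,\tau \in [0,1]$, that $|a-b|^{p-2}(a-b)(\sigma^{p_+} a - \tau^{p_+} b) \ge c\,|\sigma a - \tau b|^p - C(\max\{a,b\})^p |\sigma - \tau|^p$ with constants depending only on $p_\pm$; here the presence of the $p_+$-th power in $\varphi$ (rather than the $p$-th power) is exactly Cozzi's device that upgrades the gradient term on the left to $|\eta(x)w_+(x) - \eta(y)w_+(y)|^{p(x,y)}$, which in turn controls $\varrho_{W^{s,p(\cdot,\cdot)}(B_r)}(w_+)$ from below after discarding $\eta$ where it equals $1$. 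The error term is bounded using $|\eta(x)-\eta(y)| \le \|\nabla\eta\|_\infty |x-y| \lesssim |x-y|/(R-r)$ together with $|\eta(x)-\eta(y)| \le 1$, giving $|\eta(x)-\eta(y)|^{p(x,y)} \le (|x-y|/(R-r))^{p(x,y)}$ when $|x-y| \le R-r$ and $\le 1 \le (2R/(R-r))^{p(x,y)}(|x-y|/(2R))^{p(x,y)}$ otherwise; combined with $(\max\{w_+(x),w_+(y)\})^{p(x,y)} \le w_+(x)^{p(x,y)} + w_+(y)^{p(x,y)}$ and symmetrization, this yields the first term on the right-hand side of \eqref{eq:Caccioppoli}.

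Next I would handle the mixed term $\mathcal{I}_{\mathrm{mix}}$, the integral over $x \in B_R$, $y \in B_R^c$ of $|u(x)-u(y)|^{p(x,y)-2}(u(x)-u(y))\eta(x)^{p_+}w_+(x)|x-y|^{-n-sp(x,y)}$ (the $\varphi(y)$ term is absent since $y \notin \operatorname{supp}\eta$). On the set where $u(x) \le k$ the integrand vanishes because $w_+(x) = 0$, so assume $u(x) > k$. Writing $u(x)-u(y) = w_+(x) - (u(y)-k)$ and using monotonicity of $t \mapsto |t|^{p-2}t$, one bounds the integrand below: when $u(y) \le k$ one gets a nonnegative contribution comparable to $\eta(x)^{p_+} w_+(x) w_-(y)^{p(x,y)-1}|x-y|^{-n-sp(x,y)}$, which after restricting $y$ to $B_R$ (shrinking the domain only helps the lower bound, and this is where the $\int_{B_R} w_+ w_-^{p-1}$ term on the left of \eqref{eq:Caccioppoli} comes from), and when $u(y) > k$ one uses the crude bound $|u(x)-u(y)|^{p-1} \le C(w_+(x)^{p-1} + w_+(y)^{p-1})$; the $w_+(x)$ piece is absorbed into the local term via $w_+(x)^{p(x,y)}\eta(x)^{p_+}\int_{B_R^c} |x-y|^{-n-sp(x,y)}dy$, and the $w_+(y)$ piece becomes the tail term after estimating $|x - y| \ge c\,(R-r)|y-x_0|/(2R)$ for $x \in B_{(R+r)/2}(x_0)$ and $y \in B_R^c(x_0)$, which produces precisely the factor $(2R/(R-r))^{n+sp(x,y)}$ and the supremum over $x \in B_{(R+r)/2}$, multiplied by $\int_{B_R} \eta^{p_+} w_+ \le \int_{B_R} w_+$.

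The main obstacle is bookkeeping the variable exponent throughout: unlike the constant-$p$ case, the algebraic inequalities must be applied pointwise in $(x,y)$ with exponent $p(x,y)$, and one must ensure at each step that the constants depend only on $p_\pm(B_R(x_0) \times B_R(x_0))$ and never on $p(x,y)$ itself or on finer moduli of continuity of $p$. In particular one must be careful that the elementary inequalities hold uniformly for all $p$ in the relevant range, that splitting $(\max\{a,b\})^{p(x,y)} \le a^{p(x,y)} + b^{p(x,y)}$ and the like introduce only dimensional or $p_\pm$-dependent constants, and that the threshold $|x-y| = R-r$ used to split the error term is handled consistently on both local and nonlocal pieces. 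Apart from this uniform-constant vigilance, the argument is a careful but routine adaptation of the computations in \cite{Coz17,DCKP16}.
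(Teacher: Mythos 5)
Your overall strategy matches the paper's: test with $\varphi = \eta^{p_+} w_+$, split the bilinear form into the integral over $B_R \times B_R$ and a mixed integral over $B_R \times B_R^c$ (the $B_R^c \times B_R^c$ part vanishes), lower-bound the local part with a discrete algebraic inequality of Cozzi type, and estimate the mixed part geometrically via $|y-x_0|/|x-y| \leq 2R/(R-r)$ to produce the tail. So far so good.

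However, there is a concrete gap in how you account for the second term on the left-hand side of \eqref{eq:Caccioppoli}, namely
\[
\int_{B_r} w_+(x)\int_{B_R}\frac{w_-(y)^{p(x,y)-1}}{|x-y|^{n+sp(x,y)}}\,\d y\,\d x.
\]
Both variables here range inside $B_R$, so this term must be extracted from the \emph{local} integral $\mathcal{I}_{\mathrm{loc}}$ over $B_R\times B_R$. In the paper's proof it comes from the subregion $A_{k,R}^+\times(B_R\setminus A_{k,R}^+)$, where the integrand admits (cf.\ \cite[Prop.\ 8.5]{Coz17}) the pointwise lower bound
\[
|u(x)-u(y)|^{p(x,y)-2}(u(x)-u(y))(\varphi(x)-\varphi(y)) \geq c\bigl((w_+(x)-w_+(y))^{p(x,y)} + w_+(x)\,w_-(y)^{p(x,y)-1}\bigr)\eta(x)^{p_+}.
\]
In your write-up, the account of $\mathcal{I}_{\mathrm{loc}}$ only mentions lower bounds in terms of powers of $|w_+(x)-w_+(y)|$ minus the cutoff error, so this good term is lost. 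You then try to recover it from the mixed integral $\mathcal{I}_{\mathrm{mix}}$ (where $y\in B_R^c$) and speak of ``restricting $y$ to $B_R$''; but that restriction empties the domain of integration in the mixed term, so the argument does not produce the claimed quantity. You need to go back to the local part and refine the case where exactly one of $u(x),u(y)$ exceeds $k$.

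Two smaller remarks. Your algebraic lemma, which keeps $|\sigma a - \tau b|^p$ on the left, differs from the one the paper proves (which keeps $|a-b|^p(\max\{\sigma,\tau\})^{p_+}$); both suffice here because one only evaluates the lower bound on $B_r\times B_r$ where $\eta\equiv 1$, but if you go with the former you should verify it uniformly for variable $p(x,y)\in[p_-,p_+]$. Also, your treatment of $\mathcal{I}_{\mathrm{mix}}$ (splitting into $u(y)\leq k$ vs.\ $u(y)>k$ and then ``absorbing'' a $w_+(x)^{p-1}$ piece into the local term) is more complicated than necessary and leaves the absorption unjustified; the paper simply uses the one-sided bound $|u(x)-u(y)|^{p-2}(u(x)-u(y))w_+(x)\geq -w_+(y)^{p(x,y)-1}w_+(x)$ for all $y\in B_R^c$, which goes directly to the tail term.
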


\begin{remark}
If $u \in W^{s,p(\cdot, \cdot)}(\mathbb{R}^n)$ is a weak supersolution to \eqref{eq:EL} in $\Omega$, then \eqref{eq:Caccioppoli} holds with $w_+$ and $w_-$ replaced by $w_-$ and $w_+$, respectively.
\end{remark}

In order to prove \Cref{thm:Caccioppoli}, we need an algebraic inequality. Recall that, in the case of $p(\cdot)$-Laplacian with $1 < p_- \leq p(x) \leq p_+ < \infty$, the inequalities
\begin{equation} \label{eq:local_ineq}
\begin{split}
|Dw|^{p(x)-2} Dw \cdot D(w\eta^{p_+})
&\geq |Dw|^{p(x)} \eta^{p_+} - p_+ |Dw|^{p(x)-1} \eta^{p_+ -1} w |D\eta| \\
&\geq |Dw|^{p(x)} \eta^{p_+} - \frac{1}{2} |Dw|^{p(x)} \eta^{(p_+-1)\frac{p(x)}{p(x)-1}} - C w^{p(x)} |D\eta|^{p(x)} \\
&\geq \frac{1}{2} |Dw|^{p(x)} \eta^{p_+} - C w^{p(x)} |D\eta|^{p(x)}
\end{split}
\end{equation}
for some $C > 0$, play a crucial role for establishing Caccioppoli-type estimates (see, e.g., \cite{FZ99}). The following lemma is a discrete version of \eqref{eq:local_ineq}.

\begin{lemma} \label{lem:nonlocal_ineq}
Let $a, b \geq 0$, $\tau_1, \tau_2 \in [0,1]$, and $1 < p_- \leq p(x,y) \leq p_+ < \infty$. Then,
\begin{equation} \label{eq:nonlocal_ineq}
\begin{split}
&|a-b|^{p(x,y)-2}(a-b) \left( a\tau_1^{p_+} - b\tau_2^{p_+} \right) \\
&\geq \frac{1}{2}|a-b|^{p(x,y)} (\max\lbrace \tau_1,\tau_2 \rbrace)^{p_+} - C(\max\lbrace a,b \rbrace)^{p(x,y)} |\tau_1-\tau_2|^{p(x,y)},
\end{split}
\end{equation}
for some $C = C(p_+, p_-) > 0$.
\end{lemma}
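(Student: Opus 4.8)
The plan is to prove \eqref{eq:nonlocal_ineq} by first reducing to the case $a \ge b$, then splitting according to the sign of $\tau_1-\tau_2$, and in each case imitating the chain of estimates in \eqref{eq:local_ineq} with $Dw$ replaced by the difference $a-b$ and the cutoff $\eta$ by the $\tau_i$'s. Throughout, write $p = p(x,y) \in [p_-, p_+]$. First I would note that the substitution $(a,b,\tau_1,\tau_2) \mapsto (b,a,\tau_2,\tau_1)$ leaves both sides of \eqref{eq:nonlocal_ineq} unchanged, so it suffices to treat $a \ge b \ge 0$; then $|a-b|^{p-2}(a-b) = (a-b)^{p-1} \ge 0$.

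\textbf{The easy case $\tau_1 \ge \tau_2$.} Since $b \ge 0$ and $\tau_2^{p_+} \le \tau_1^{p_+}$, one has $a\tau_1^{p_+} - b\tau_2^{p_+} \ge (a-b)\tau_1^{p_+} \ge 0$, so the left-hand side of \eqref{eq:nonlocal_ineq} is at least $(a-b)^p\tau_1^{p_+} = |a-b|^p(\max\{\tau_1,\tau_2\})^{p_+}$, which already dominates the right-hand side; no error term is needed.

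\textbf{The main case $\tau_1 < \tau_2$.} Here $\max\{\tau_1,\tau_2\} = \tau_2$ and $\max\{a,b\} = a$. Starting from the elementary identity $a\tau_1^{p_+} - b\tau_2^{p_+} = (a-b)\tau_2^{p_+} - a(\tau_2^{p_+}-\tau_1^{p_+})$, one obtains
\[
|a-b|^{p-2}(a-b)(a\tau_1^{p_+} - b\tau_2^{p_+}) = |a-b|^p\tau_2^{p_+} - (a-b)^{p-1}a(\tau_2^{p_+}-\tau_1^{p_+}),
\]
so everything reduces to bounding $(a-b)^{p-1}a(\tau_2^{p_+}-\tau_1^{p_+})$ by $\tfrac12|a-b|^p\tau_2^{p_+} + Ca^p(\tau_2-\tau_1)^p$. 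For this I would use, in order: convexity of $t \mapsto t^{p_+}$ to get $\tau_2^{p_+}-\tau_1^{p_+} \le p_+\tau_2^{p_+-1}(\tau_2-\tau_1)$; the fact that $\tau_2 \in [0,1]$ together with $p_+ \ge p$ to write $\tau_2^{p_+-1} \le \tau_2^{p_+(p-1)/p}$; and finally Young's inequality with conjugate exponents $p/(p-1)$ and $p$ applied to $\bigl(|a-b|\,\tau_2^{p_+/p}\bigr)^{p-1}\cdot a(\tau_2-\tau_1)$, which produces $\varepsilon|a-b|^p\tau_2^{p_+} + C_\varepsilon a^p(\tau_2-\tau_1)^p$ for any $\varepsilon > 0$. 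Choosing $\varepsilon = 1/(2p_+)$ then finishes the estimate.

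The one genuinely delicate point — and the reason why the fixed exponent $p_+$ appears on $\tau_1,\tau_2$ rather than $p(x,y)$ — is the exponent bookkeeping in the main case: because $0 \le \tau_2 \le 1$, one cannot simply replace the power $p$ by $p_+$ (the inequality would go the wrong way), so the factor $\tau_2^{p_+-1}$ must be split as $\tau_2^{p_+(p-1)/p}\cdot\tau_2^{p_+/p-1}$ and the harmless second factor (which is $\le 1$) discarded \emph{before} Young's inequality is invoked, so that exactly the power $\tau_2^{p_+}$ is recovered in the absorbable term. Since the Young constant $C_\varepsilon = C_\varepsilon(p)$ depends continuously on $p$ over the compact interval $[p_-,p_+]$, it is bounded by a constant depending only on $p_\pm$, which yields the stated dependence of $C$.
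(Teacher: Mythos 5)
Your argument is correct and follows essentially the same path as the paper's: expand the product algebraically to isolate a good term $|a-b|^p(\max\{\tau_1,\tau_2\})^{p_+}$ and an error, bound $\tau_2^{p_+}-\tau_1^{p_+}$ via convexity of $t\mapsto t^{p_+}$, exploit $\tau\in[0,1]$ and $p\le p_+$ to trade the exponent $p_+-1$ for $p_+(p-1)/p$, and absorb by Young's inequality with conjugate exponents $p/(p-1)$ and $p$. The only cosmetic difference is that you reduce to $a\ge b$ and split on the sign of $\tau_1-\tau_2$, whereas the paper avoids the case split by proving the symmetric two-sided algebraic bound with $\max\{\tau_1,\tau_2\}$ and $\max\{a,b\}$ directly; both choices of $\varepsilon$ (your $1/(2p_+)$ versus the paper's $(1/(2p_+))^{(p_+-1)/p_+}$) absorb the good term with coefficient $\ge 1/2$ and give a constant $C$ depending only on $p_\pm$.
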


\begin{proof}
Since
\begin{align*}
(a-b)\left( a\tau_1^{p_+}-b\tau_2^{p_+} \right) &\geq (a-b)^2\tau_1^{p_+} - b |a-b| |\tau_1^{p_+}-\tau_2^{p_+}| \quad\text{and} \\
(a-b)\left( a\tau_1^{p_+}-b\tau_2^{p_+} \right) &\geq (a-b)^2\tau_2^{p_+} - a |a-b| |\tau_1^{p_+}-\tau_2^{p_+}|,
\end{align*}
we have
\begin{equation} \label{eq:alg_ineq}
(a-b)\left( a\tau_1^{p_+}-b\tau_2^{p_+} \right) \geq (a-b)^2 (\max\lbrace \tau_1,\tau_2 \rbrace)^{p_+} - \max\lbrace a, b \rbrace |a-b| |\tau_1^{p_+}-\tau_2^{p_+}|.
\end{equation}
By convexity of the function $f(\tau) = \tau^{p_+}$, 
\begin{equation} \label{eq:convexity}
\begin{split}
|\tau_1^{p_+} - \tau_2^{p_+}|
&\leq \max\lbrace f'(\tau_1), f'(\tau_2) \rbrace |\tau_1-\tau_2| \\
&\leq {p_+} |\tau_1-\tau_2| (\max\lbrace \tau_1,\tau_2 \rbrace)^{{p_+}-1}.
\end{split}
\end{equation}
Thus, it follows from \eqref{eq:alg_ineq}, \eqref{eq:convexity}, and Young's inequality that
\begin{align*}
&|a-b|^{p(x,y)-2}(a-b) \left( a\tau_1^{p_+} - b\tau_2^{p_+} \right) \\
&\geq |a-b|^{p(x,y)} (\max\lbrace \tau_1,\tau_2 \rbrace)^{p_+} - p_+ \frac{p(x,y)-1}{p(x,y)} \varepsilon^{\frac{p(x,y)}{p(x,y)-1}} |a-b|^{p(x,y)} (\max\lbrace \tau_1,\tau_2 \rbrace)^{(p_+ -1)\frac{p(x,y)}{p(x,y)-1}} \\
&\quad - \frac{p_+}{p(x,y)} \varepsilon^{-p(x,y)} (\max\lbrace a, b \rbrace)^{p(x,y)} |\tau_1-\tau_2|^{p(x,y)} \\
&\geq \left( 1 - p_+ \varepsilon^{p_+/(p_+-1)} \right) |a-b|^{p(x,y)} (\max\lbrace \tau_1,\tau_2 \rbrace)^{p_+} - \frac{p_+}{p_-}\varepsilon^{-p_+} (\max\lbrace a, b \rbrace)^{p(x,y)} |\tau_1-\tau_2|^{p(x,y)}.
\end{align*}
Taking $\varepsilon = (1/(2p_+))^{(p_+-1)/p_+}$, we obtain \eqref{eq:nonlocal_ineq} with $C = \frac{p_+}{p_-} (2p_+)^{p_+-1}$.
\end{proof}

\begin{proof} [Proof of \Cref{thm:Caccioppoli}]
In this proof, every ball is centered at $x_0$. Let $\eta$ be a cut-off function satisfying $\eta \in [0,1]$, $\supp \eta \subset B_{\frac{R+r}{2}} \subset B_R$, $\eta \equiv 1$ in $B_r$, and $|D\eta| \leq 4/(R-r)$. Let $p_\pm = p_\pm(B_R \times B_R)$. We first assume that $u \in L^{\infty}(B_{2R})$, then $\varphi(x) = w_+(x)\eta(x)^{p_+} \in W^{s,p(\cdot, \cdot)}(\mathbb{R}^n)$ by \cite[Lemma 4.1]{Ok21}. Applying the definition of weak subsolutions with the test function $\varphi$, we have
\begin{equation} \label{eq:I_1+I_2}
\begin{split}
0
&\geq \int_{B_R} \int_{B_R} \frac{|u(x)-u(y)|^{p(x,y)-2}(u(x)-u(y))(\varphi(x) - \varphi(y))}{|x-y|^{n+sp(x,y)}} \,\d y \,\d x \\
&\quad + 2 \int_{B_R} \int_{\mathbb{R}^n \setminus B_R} \frac{|u(x)-u(y)|^{p(x,y)-2}(u(x)-u(y)) w_+(x)\eta(x)^{p_+}}{|x-y|^{n+sp(x,y)}} \,\d y \, \d x =: I_1 + I_2.
\end{split}
\end{equation}
It is easy to see that
\begin{equation} \label{eq:I_1_1}
\begin{split}
&|u(x)-u(y)|^{p(x,y)-2}(u(x)-u(y))(\varphi(x) - \varphi(y)) \\
&\geq |w_+(x)-w_+(y)|^{p(x,y)-2}(w_+(x)-w_+(y))(w_+(x)\eta(x)^{p_+} - w_+(y) \eta(y)^{p_+}),
\end{split}
\end{equation}
as in the proof of \cite[Lemma 1.4]{DCKP16}. Moreover, by \Cref{lem:nonlocal_ineq} with $a=w_+(x)$, $b=w_+(y)$, $\tau_1=\eta(x)$, and $\tau_2=\eta(y)$, we obtain
\begin{equation} \label{eq:I_1_2}
\begin{split}
&|w_+(x)-w_+(y)|^{p(x,y)-2}(w_+(x)-w_+(y))(w_+(x)\eta(x)^{p_+} - w_+(y) \eta(y)^{p_+}) \\
&\geq \frac{1}{2}|w_+(x)-w_+(y)|^{p(x,y)} (\max\lbrace \eta(x), \eta(y) \rbrace)^{p_+} \\
&\quad - C(\max\lbrace w_+(x), w_+(y) \rbrace)^{p(x,y)} |\eta(x)-\eta(y)|^{p(x,y)}
\end{split}
\end{equation}
for all $x, y \in B_R$, where $C = C(p_+, p_-)>0$. On the other hand, it is obvious that
\begin{equation} \label{eq:I_1_3}
|u(x)-u(y)|^{p(x,y)-2}(u(x)-u(y))(\varphi(x) - \varphi(y)) = 0
\end{equation}
when $u(x), u(y) \leq k$. Furthermore, if $u(x) > k$ and $
u(y) \leq k$, then
\begin{equation} \label{eq:I_1_4}
\begin{split}
&|u(x)-u(y)|^{p(x,y)-2}(u(x)-u(y))(\varphi(x) - \varphi(y)) \\
&\geq c \left((w_+(x)-w_+(y))^{p(x,y)} + w_+(x) w_-(y)^{p(x,y)-1} \right) \eta^{p_+}(x)
\end{split}
\end{equation}
by a similar argument as in \cite[Proposition 8.5]{Coz17}, where $c = 2^{p_--2} \land 1$. Therefore, combining \eqref{eq:I_1_1}–\eqref{eq:I_1_4} and using the symmetry of $p(x,y)$, we estimate $I_1$ by
\begin{equation} \label{eq:I_1_Caccioppoli}
\begin{split}
I_1
&= \left( \int_{A_{k,R}^+} \int_{A_{k,R}^+} + 2\int_{A_{k,R}^+} \int_{B_R \setminus A_{k,R}^+} \right) \frac{|u(x)-u(y)|^{p(x,y)-2}(u(x)-u(y))(\varphi(x) - \varphi(y))}{|x-y|^{n+sp(x,y)}} \,\d y \,\d x \\ 
&\geq c \int_{B_r} \int_{B_r} \frac{|w_+(x)-w_+(y)|^{p(x,y)}}{|x-y|^{n+sp(x,y)}} \,\d y \,\d x + 2c \int_{B_r} w_+(x) \int_{B_R} \frac{w_-(y)^{p(x,y)-1}}{|x-y|^{n+sp(x,y)}} \,\mathrm{d}y \,\mathrm{d}x \\
&\quad - C \int_{B_R} \int_{B_R} \frac{w_+(x)^{p(x,y)} |\eta(x)-\eta(y)|^{p(x,y)}}{|x-y|^{n+sp(x,y)}} \,\d y \,\d x,
\end{split}
\end{equation}
where $A_{k,R}^+ = B_R \cap \lbrace u > k \rbrace$.

For $I_2$, we use the inequalities
\begin{equation*}
\begin{split}
|u(x)-u(y)|^{p(x,y)-2}(u(x)-u(y)) w_+(x)
&\geq -(u(y)-u(x))_+^{p(x,y)-1}w_+(x) \\
&\geq -w_+(y)^{p(x,y)-1} w_+(x)
\end{split}
\end{equation*}
and
\begin{equation*}
\frac{|y-x_0|}{|x-y|} \leq 1 + \frac{|x-x_0|}{|x-y|} \leq 1+\frac{R+r}{R-r} = \frac{2R}{R-r}, \quad x \in B_{\frac{R+r}{2}}, y \in \mathbb{R}^n \setminus B_R,
\end{equation*}
to obtain
\begin{equation} \label{eq:I_2_Caccioppoli}
\begin{split}
I_2
&\geq - 2 \int_{B_R} \int_{\mathbb{R}^n \setminus B_R} \frac{w_+(y)^{p(x,y)-1} w_+(x) \eta(x)^{p_+}}{|x-y|^{n+sp(x,y)}} \,\d y \,\d x \\
&\geq - 2\left( \sup_{x \in \mathrm{supp}\, \eta} \int_{\mathbb{R}^n \setminus B_R} \frac{w_+(y)^{p(x,y)-1}}{|x-y|^{n+sp(x,y)}} \,\d y \right) \int_{B_R} w_+(x) \eta(x)^{p_+} \,\mathrm{d}x \\
&\geq - 2 \left( \sup_{x \in B_{\frac{R+r}{2}}} \int_{\R^n \setminus B_R} \frac{w_+(y)^{p(x,y)-1}}{|y-x_0|^{n+sp(x,y)}} \left( \frac{2R}{R-r} \right)^{n+sp(x,y)} \, \d y \right) \int_{B_R} w_+(x) \,\mathrm{d}x.
\end{split}
\end{equation}
Therefore, \eqref{eq:Caccioppoli} follows from \eqref{eq:I_1+I_2}, \eqref{eq:I_1_Caccioppoli}, \eqref{eq:I_2_Caccioppoli}, and $|D\eta| \leq 4/(R-r)$.

The general case $u \in W^{s, p(\cdot,\cdot)}(\mathbb{R}^n)$ follows in the standard way by using truncated functions and the monotone convergence theorem.
\end{proof}

\section{Local boundedness} \label{sec:local_boundedness}

In this section, we prove \Cref{thm:local_boundedness}. The idea of the proof of the local boundedness is to fix a point $x_0 \in \Omega$ and find a small ball $B_{R/2}(x_0) \subset \Omega$ on which $u$ is bounded. We begin with \Cref{thm:local_boundedness2}, where the case $p(x_0, x_0) \leq n/s$ is covered. \Cref{thm:local_boundedness2} not only proves the local boundedness of weak subsolutions in $B_{R/2}(x_0)$ but also provides a quantitative estimate of their supremum. The proof of \Cref{thm:local_boundedness2} is based on the De Giorgi iteration technique. In the end of this section, we will prove \Cref{thm:local_boundedness} by combining \Cref{thm:local_boundedness2} and \Cref{thm:morrey}.

\begin{theorem} \label{thm:local_boundedness2}
Let $\Omega$ be a bounded domain in $\mathbb{R}^n$ and let $0<\sigma<s<1$. Let $u \in W^{s, p(\cdot,\cdot)}(\mathbb{R}^n)$ be a weak subsolution to \eqref{eq:EL} in $\Omega$ satisfying \eqref{eq:tail}. For each $x_0 \in \Omega$ with $p(x_0, x_0) \leq n/s$, there is a radius $R \in (0,1)$ such that $B_{R} = B_{R}(x_0) \subset \Omega$, $p_+ < p_-^{\ast} := \frac{np_-}{n-\sigma p_-}$, and
\begin{equation} \label{eq:bounded_above}
\begin{split}
\sup_{B_{R/2}} u 
&\leq C\max \left\lbrace \left( \fint_{B_{R}} u_+^{p_+}(x) \,\mathrm{d}x \right)^{\frac{1}{p_+}}, \left( \fint_{B_{R}} u_+^{p_+}(x) \,\mathrm{d}x \right)^{\frac{1}{p_-} \frac{q-p_-}{q-p_+}} \right\rbrace \\
&\quad + \left( \sup_{x \in B_{R}} \int_{\R^n \setminus B_{R/2}} \frac{u_+(y)^{p(x,y)-1}}{|y-x_0|^{n+sp(x,y)}} \, \d y \right)^{1/(p_+-1)} + 1
\end{split}
\end{equation}
for any $q \in (\max\lbrace p_+, \frac{n}{n-\sigma} \rbrace, p_-^{\ast})$, where $p_\pm = p_\pm(B_{R} \times B_{R})$. The constant $C$ depends on $n$, $s$, $\sigma$, $p_+(B_R \times \mathbb{R}^n)$, $p_-(B_R \times B_R)$, $q$, and $R$.
\end{theorem}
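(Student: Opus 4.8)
The plan is to run a De Giorgi iteration on a sequence of shrinking balls, using the improved Caccioppoli estimate from \Cref{thm:Caccioppoli} to set up the recursion. First I would fix $x_0 \in \Omega$ with $p(x_0,x_0) \le n/s$. Using the continuity of $p$ at $(x_0,x_0)$, I would pick a radius $R \in (0,1)$ small enough that $B_R = B_R(x_0) \subset \Omega$ and that $p_+ = p_+(B_R \times B_R)$ is so close to $p_- = p_-(B_R \times B_R)$ that the two key strict inequalities $p_+ < p_-^\ast = \tfrac{np_-}{n-\sigma p_-}$ and $p_+ < q$ hold for a suitable $q \in (\max\{p_+, \tfrac{n}{n-\sigma}\}, p_-^\ast)$ (this uses $\sigma < s$ and $p(x_0,x_0)\le n/s$ so that $p_-$ is subcritical for the exponent $\sigma$, hence $p_-^\ast$ is well-defined and strictly above $p_-$, leaving a gap to fill with $p_+$). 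Then I would set up the standard iteration: let $\rho_j = \tfrac{R}{2}(1+2^{-j})$, $B_j = B_{\rho_j}$, let $k_j = M(1-2^{-j})$ for a level $M>0$ to be chosen, and let $w_j = (u-k_j)_+$ and $A_j = B_j \cap \{u > k_j\}$.

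The heart of the argument is to estimate $a_j := \big(\fint_{B_j} w_j^{p_+}\big)^{1/p_+}$ (or a modular analogue) and show it satisfies a fast-geometric recursion $a_{j+1} \le C b^j a_j^{1+\delta}$ for some $b>1$, $\delta>0$, which then forces $a_j \to 0$ provided $a_0$ is small enough; smallness of $a_0$ is arranged by choosing $M$ comparable to the right-hand side of \eqref{eq:bounded_above}. To get the recursion: I would apply \Cref{thm:Caccioppoli} on $B_{j+1} \Subset B_j$ with $k = k_{j+1}$. The left-hand side controls $\varrho_{W^{s,p(\cdot,\cdot)}(B_{j+1})}(w_{j+1})$, which via \Cref{lem:modular} controls a fractional seminorm $[w_{j+1}]_{W^{\sigma,p_-}(B_{j+1})}$ after reducing the variable exponent to the constant $p_-$ (here one pays with factors of $R$ and uses $\sigma < s$ to absorb the kernel singularity difference, plus the inclusion $A_{j+1} \subset A_j$ to compare domains). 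Then \Cref{thm:homo_sobolev} (applicable since on $B_{j+1}$ the function $w_{j+1}$ vanishes on a set of proportional measure once $a_j$ is small, because $u \le k_{j+1}$ there forces $w_{j+1}=0$, and $|\{u>k_{j+1}\}\cap B_{j+1}|$ is small by Chebyshev relative to $a_j$) upgrades this to an $L^{p_-^\ast}$ bound on $w_{j+1}$. Combining with Hölder's inequality on $B_{j+1}$, the measure estimate $|A_{j+1} \cap B_{j+1}| \le (2^{j+2}/M)^{p_+} a_j^{p_+}|B_j|$, and the elementary pointwise bounds $w_j \ge M 2^{-j-1}$ on $A_{j+1}$ and $w_{j+1} \le w_j$, yields the desired superlinear recursion; the two different exponents $p_+$ and $p_-$ entering (one from the Caccioppoli right-hand side, one from the Sobolev target) are exactly what produces the two distinct powers appearing in the $\max$ in \eqref{eq:bounded_above}.

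The right-hand-side terms of the Caccioppoli estimate also need handling. The first term $\int_{B_j}\int_{B_j} |w_j/(\rho_j-\rho_{j+1})|^{p(x,y)} |x-y|^{-(n-(1-s)p(x,y))}\,\d y\,\d x$ I would bound by integrating out the $y$-variable (the kernel $|x-y|^{-n+(1-s)p}$ is locally integrable uniformly since $(1-s)p > 0$), producing essentially $(2^{j}/R)^{p_+}$ times a modular $\int_{B_j} w_j^{p(x,x)}\,\d x$, which in turn is dominated by a combination of $\int w_j^{p_+}$ and $\int w_j^{p_-}$ over $A_j$ plus lower-order contributions — and since $w_j \le \sup_{B_j} w_j$ is not available a priori, here one instead keeps both powers and interpolates, which again feeds the $p_+/p_-$ mismatch. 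The nonlocal tail term I would split as $(u-k_{j+1})_+ \le u_+$ on $\mathbb{R}^n \setminus B_{R/2}$ plus a finite piece on the annulus, bounding the annular part by $M$ times the measure of $A_j$ (which is small) and the far part by the tail quantity in \eqref{eq:tail}, so that after raising to the power $1/(p_+-1)$ it contributes the second line of \eqref{eq:bounded_above}; the factor $(2R/(R-r))^{n+sp} \le C(n,s,p_+) 2^{j(n+sp_+)}$ is absorbed into the geometric factor $b^j$.

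The main obstacle I expect is the bookkeeping forced by the variable exponent: at every step one must pass from modular quantities with exponent $p(x,y)$ to norm-type quantities with the constant exponents $p_-$ or $p_+$, and each such passage via \Cref{lem:modular}/\Cref{lem:modular_Lp} behaves differently according to whether the relevant norm exceeds $1$ or not. This case distinction is what forces the $\max\{\cdots\}$ structure and the peculiar exponent $\tfrac{1}{p_-}\tfrac{q-p_-}{q-p_+}$ in \eqref{eq:bounded_above}: one has to track, through the iteration, a quantity like $\max\{a_j, a_j^{\theta}\}$ with $\theta = \tfrac{q-p_-}{q-p_+} \cdot \tfrac{p_+}{p_-} < 1$, verify it still obeys a fast-geometric recursion (using $q > p_+$ to keep $\theta$ bounded away from the bad regime and $q < p_-^\ast$ to have genuine room in the Sobolev step), and only at the very end convert back. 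Getting the dependence of $C$ correct — in particular that it may depend on $p_+(B_R \times \mathbb{R}^n)$ through the tail but only on $p_\pm(B_R \times B_R)$ elsewhere — requires care that the cutoff kills all contributions from $\Omega^c \times \Omega^c$ and that the tail exponent $p(x,y)$ for $x \in B_R$, $y \notin B_R$ is controlled by $p_+(B_R \times \mathbb{R}^n)$.
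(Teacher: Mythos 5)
Your proposal follows essentially the same De Giorgi iteration as the paper: apply \Cref{thm:Caccioppoli} on shrinking concentric balls, reduce the variable-exponent modular to a constant-exponent fractional seminorm, run the Sobolev embedding, and close the recursion with \Cref{lem:iteration}, with the $\max$ in \eqref{eq:bounded_above} arising from the $\varrho^{1/p_-}$ versus $\varrho^{1/p_+}$ conversion in \Cref{lem:modular}. The one place you deviate is in choosing \Cref{thm:homo_sobolev} (which needs the proportional-vanishing hypothesis, hence an extra inductive smallness argument and some care since $w_{j+1}$ is not compactly supported) and aiming at $W^{\sigma,p_-}$ directly, whereas the paper avoids both issues by using \Cref{thm:sobolev} together with \Cref{lem:embedding} for an exponent $t$ \emph{strictly} below $p_-$ with $t^\ast=q<p_-^\ast$ — the latter is slightly cleaner since \Cref{lem:embedding} genuinely requires $q<p_-$.
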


\begin{remark}
If $u \in W^{s, p(\cdot,\cdot)}(\mathbb{R}^n)$ is a weak supersolution to \eqref{eq:EL} in $\Omega$ satisfying \eqref{eq:tail} (with $u_-$), then $-u$ is a weak subsolution to \eqref{eq:EL} in $\Omega$ satisfying \eqref{eq:tail} (with $u_+$), and hence \eqref{eq:bounded_above} holds with $u$ replaced by $-u$.
\end{remark}

The Caccioppoli-type inequality and the fractional Sobolev inequality are crucial tools for the De Giorgi iteration. One can make use of the fractional Sobolev inequality with variable exponent developed in \cite{HK19}, but it requires the assumption $p_+(B_{R}(x_0) \times B_{R}(x_0)) < n/s$, which is stronger than the assumption made in \Cref{thm:local_boundedness2}, namely $p(x_0, x_0) \leq n/s$. Thus, we will use the fractional Sobolev inequality with constant exponent (\Cref{thm:sobolev}).

For local variational problems, we have the continuous embedding $W^{1,p(\cdot)}(\Omega) \hookrightarrow W^{1,p_-}(\Omega)$ by a simple inequality $\int |Du|^{p_-} \,\mathrm{d}x \leq \int (|Du|^{p(x)}+1)\,\mathrm{d}x$. However, a similar continuous embedding $W^{s, p(\cdot, \cdot)}(\Omega) \hookrightarrow W^{s, p_-}(\Omega)$ is not available. Instead, we prove the following lemma, which shows a continuous embedding into a larger space with smaller orders of differentiability $\sigma < s$ and integrability $q < p_-$. This lemma is a generalization of \cite[Lemma 4.6]{Coz17}.

\begin{lemma} \label{lem:embedding}
Let $\Omega' \subset \Omega \subset \mathbb{R}^n$ be two bounded measurable sets with $d:= \mathrm{diam}(\Omega) \leq 1$. Let $1 \leq q < p_- \leq p(x,y) \leq p_+$ and $0<\sigma < s <1$, where $p_\pm = p_\pm(\Omega \times \Omega)$, then $W^{s, p(\cdot,\cdot)}(\Omega) \hookrightarrow W^{\sigma, q}(\Omega)$. In particular, for any $u \in W^{s, p(\cdot, \cdot)}(\Omega)$,
\begin{equation*}
\begin{split}
&\left( \int_{\Omega} \int_{\Omega'} \frac{|u(x)-u(y)|^q}{|x-y|^{n+\sigma q}} \,\mathrm{d}y \,\mathrm{d}x \right)^{1/q} \\
&\leq C \max \left\lbrace \left( |\Omega'| d^{(s-\sigma) \frac{p_+ q}{p_+-q}} \right)^{\frac{p_+-q}{p_+q}}, \left( |\Omega'| d^{(s-\sigma) \frac{p_+ q}{p_+-q}} \right)^{\frac{p_--q}{p_-q}} \right \rbrace [u]_{W^{s,p(\cdot, \cdot)}(\Omega)},
\end{split}
\end{equation*}
where $C = C(n, s, \sigma, p_+, p_-, q) > 0$.
\end{lemma}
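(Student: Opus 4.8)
The plan is to reduce everything to a pointwise (in $y$, or rather in the difference variable) comparison between the $q$-th power of the difference quotient at smoothness $\sigma$ and the $p(x,y)$-th power at smoothness $s$, and then apply Young's inequality in the form adapted to the gap between $q$ and the exponent $p(x,y)$. Fix $u \in W^{s,p(\cdot,\cdot)}(\Omega)$; by homogeneity of the claimed estimate we may assume $[u]_{W^{s,p(\cdot,\cdot)}(\Omega)} \leq 1$, so that by \Cref{lem:modular}(ii) we have $\varrho_{W^{s,p(\cdot,\cdot)}(\Omega)}(u) \leq [u]_{W^{s,p(\cdot,\cdot)}(\Omega)}^{p_-} \leq 1$. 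Set $t = |x-y| \leq d$. The elementary inequality to exploit is
\begin{equation*}
\frac{|u(x)-u(y)|^q}{|x-y|^{n+\sigma q}} = \left( \frac{|u(x)-u(y)|^{p(x,y)}}{|x-y|^{n+sp(x,y)}} \right)^{q/p(x,y)} \cdot \frac{|x-y|^{(n+sp(x,y)) q/p(x,y)}}{|x-y|^{n+\sigma q}},
\end{equation*}
where the leftover power of $|x-y|$ is a nonnegative power of $t$ (hence bounded by a power of $d$) precisely because $q < p(x,y)$ and $\sigma < s$: the exponent is $\frac{(n+sp(x,y))q}{p(x,y)} - (n+\sigma q) = n\left(\tfrac{q}{p(x,y)}-1\right) + (s-\sigma)q \geq -n\left(1-\tfrac{q}{p_-}\right) + \ldots$, which one checks is $\geq (s-\sigma)q - n\left(\tfrac{p_+-q}{p_+}\right)$ — so it need not be nonnegative, and this is the first subtlety to address carefully by splitting the $t$-dependence the right way.

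The cleaner route, and the one I would actually carry out, is Young's inequality with exponents $\frac{p(x,y)}{q}$ and $\frac{p(x,y)}{p(x,y)-q}$ applied to the product
\begin{equation*}
\frac{|u(x)-u(y)|^q}{|x-y|^{n+\sigma q}} = \underbrace{\frac{|u(x)-u(y)|^q}{|x-y|^{(n+sp(x,y))q/p(x,y)}}}_{\text{raise to }p(x,y)/q} \cdot \underbrace{|x-y|^{(n+sp(x,y))/p(x,y) - (n+\sigma q)/q}}_{\text{raise to }p(x,y)/(p(x,y)-q)},
\end{equation*}
which gives
\begin{equation*}
\frac{|u(x)-u(y)|^q}{|x-y|^{n+\sigma q}} \leq \frac{q}{p(x,y)}\frac{|u(x)-u(y)|^{p(x,y)}}{|x-y|^{n+sp(x,y)}} + \frac{p(x,y)-q}{p(x,y)}\,|x-y|^{\beta(x,y)},
\end{equation*}
where $\beta(x,y) = \left(\tfrac{n}{p(x,y)}+s - \tfrac{n}{q}-\sigma\right)\tfrac{p(x,y)}{p(x,y)-q}$. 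One then checks $\beta(x,y) \geq -n$, so that $|x-y|^{\beta(x,y)}$ is integrable over $\Omega' \times \Omega$ in the inner variable, with the bound $\int_{\Omega'} |x-y|^{\beta(x,y)}\,\d y \lesssim |\Omega'| \, d^{\,\beta(x,y)+\ldots}$ — here one uses $d \leq 1$ to replace the $(x,y)$-dependent exponent by the worst case, producing the maximum of the two powers of $|\Omega'| d^{(s-\sigma)p_+q/(p_+-q)}$ in the statement (the two cases corresponding to whether that quantity is $\geq 1$ or $< 1$, exactly as in \Cref{lem:modular}). Integrating the Young inequality over $\Omega' \times \Omega$, the first term is controlled by $\frac{q}{p_-}\varrho_{W^{s,p(\cdot,\cdot)}(\Omega)}(u) \leq 1 = [u]^{q \cdot 0}$... — more precisely one keeps track of the homogeneity and, after undoing the normalization $[u]\leq 1$, recovers the factor $[u]_{W^{s,p(\cdot,\cdot)}(\Omega)}$ (to the first power, since on the normalized scale both sides are comparable to powers of $[u]$ and the stated inequality is the one that survives rescaling). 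The continuous embedding $W^{s,p(\cdot,\cdot)}(\Omega)\hookrightarrow W^{\sigma,q}(\Omega)$ follows by taking $\Omega' = \Omega$ and combining with the $L^q(\Omega) \hookrightarrow$ part, which comes from $L^{\bar p(\cdot)}(\Omega) \hookrightarrow L^q(\Omega)$ (since $q < p_- \leq \bar p$ and $|\Omega|<\infty$, via \Cref{lem:modular_Lp}).

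The main obstacle is bookkeeping rather than conceptual: one must verify that $\beta(x,y) \geq -n$ uniformly (this is where $q < p_-$ is genuinely used — if $q$ were allowed to equal $p_-$ the exponent $n/p(x,y)-n/q$ could drop the total below $-n$ at points where $p(x,y)=p_-$), and one must handle the homogeneity carefully so that the final bound is linear in $[u]_{W^{s,p(\cdot,\cdot)}(\Omega)}$ with the stated $\max$ of two structural constants, which requires running the normalization argument in the two regimes $[u] \geq 1$ and $[u] \leq 1$ separately and invoking \Cref{lem:modular}. All of this parallels \cite[Lemma 4.6]{Coz17}, the only new feature being that the Young exponent $p(x,y)/q$ now varies with $(x,y)$, which is precisely why the final constant is a maximum of a $p_+$-power and a $p_-$-power rather than a single power.
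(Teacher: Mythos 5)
Your factorization of the integrand is exactly the one the paper uses, but the step where you replace H\"older's inequality by the additive Young inequality creates a genuine gap. Plain Young's inequality gives, after integration over $\Omega\times\Omega'$, a bound of the form
\begin{equation*}
\int_{\Omega}\int_{\Omega'}\frac{|u(x)-u(y)|^q}{|x-y|^{n+\sigma q}}\,\d y\,\d x \;\leq\; \frac{q}{p_-}\,\varrho_{W^{s,p(\cdot,\cdot)}(\Omega)}(u) \;+\; C\,K, \qquad K:=|\Omega'|\,d^{(s-\sigma)\frac{p_+q}{p_+-q}},
\end{equation*}
and this \emph{cannot} be upgraded to the stated multiplicative bound $C\max\{K^{(p_+-q)/p_+},K^{(p_--q)/p_-}\}\,[u]^q$. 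Indeed, after normalizing $[u]_{W^{s,p(\cdot,\cdot)}(\Omega)}=1$ the right-hand side of the lemma tends to $0$ as $K\to 0$ (e.g.\ as $|\Omega'|\to 0$), while your first Young term $\frac{q}{p_-}\varrho(u)$ carries no factor of $K$ and is not uniformly small over the unit sphere of the seminorm; the "undoing the normalization" step you invoke fails precisely because the second Young term contains no power of $u$ at all, so the two sides of your inequality have different homogeneities in $u$. The repair is either to apply the variable-exponent H\"older inequality (\Cref{lem:Holder}) with exponents $p(\cdot,\cdot)/q$ and $p(\cdot,\cdot)/(p(\cdot,\cdot)-q)$ on $\Omega\times\Omega'$ — which is what the paper does, estimating $\|V\|_{L^{p(\cdot,\cdot)/q}}=[u]^q$ exactly and converting $\|W\|_{L^{p(\cdot,\cdot)/(p(\cdot,\cdot)-q)}}$ into the max of two modular powers via \Cref{lem:modular_Lp} — or to insert a free parameter $\lambda>0$ into Young's inequality and optimize at the end, which amounts to re-deriving that H\"older inequality.

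Two smaller points. First, your displayed factorization does not multiply back to the left-hand side: the second factor should carry the exponent $\frac{(n+sp(x,y))q}{p(x,y)}-(n+\sigma q)$, i.e.\ $q$ times what you wrote; after raising to $p(x,y)/(p(x,y)-q)$ the correct exponent is $\beta(x,y)=-n+(s-\sigma)\frac{qp(x,y)}{p(x,y)-q}$, which is strictly greater than $-n$ whenever $s>\sigma$ and $p(x,y)>q$, so the integrability issue you flag as the "first subtlety" resolves itself and is not where $q<p_-$ is needed. Second, the role of $q<p_-$ is to keep the H\"older/Young conjugate exponent $p(\cdot,\cdot)/(p(\cdot,\cdot)-q)$ uniformly bounded (and the exponent $(s-\sigma)\frac{qp_+}{p_+-q}$ finite), not to save the lower bound $\beta\geq -n$.
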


\begin{proof}
We define
\begin{equation*}
U(x,y) := \frac{|u(x)-u(y)|^q}{|x-y|^{n+\sigma q}},
\end{equation*}
then
\begin{equation*}
U(x,y) = \frac{|u(x)-u(y)|^q}{|x-y|^{(n+sp(x,y))\frac{q}{p(x,y)}}} \frac{1}{|x-y|^{n\frac{p(x,y)-q}{p(x,y)}- (s-\sigma)q}} =: V(x,y) W(x,y).
\end{equation*}
Thus, by \Cref{lem:Holder} we obtain
\begin{equation*}
\begin{split}
\int_{\Omega} \int_{\Omega'} \frac{|u(x)-u(y)|^q}{|x-y|^{n+\sigma q}} \,\mathrm{d}y \,\mathrm{d}x 
&= \|U\|_{L^1(\Omega \times \Omega')} \\
&\leq 2\|V\|_{L^{\frac{p(\cdot,\cdot)}{q}}(\Omega \times \Omega')} \|W\|_{L^{\frac{p(\cdot,\cdot)}{p(\cdot,\cdot)-q}}(\Omega \times \Omega')} \\
&= 2[u]_{W^{s,p(\cdot,\cdot)}(\Omega)}^q \|W\|_{L^{\frac{p(\cdot,\cdot)}{p(\cdot,\cdot)-q}}(\Omega \times \Omega')}.
\end{split}
\end{equation*}
Using \Cref{lem:modular_Lp}, we have
\begin{equation*}
\|W\|_{L^{\frac{p(\cdot,\cdot)}{p(\cdot,\cdot)-q}}(\Omega \times \Omega')} \leq \max \left\lbrace \varrho_{L^{\frac{p(\cdot,\cdot)}{p(\cdot,\cdot)-q}}(\Omega \times \Omega')}(W)^{\frac{p_+-q}{p_+}}, \varrho_{L^{\frac{p(\cdot,\cdot)}{p(\cdot,\cdot)-q}}(\Omega \times \Omega')}(W)^{\frac{p_--q}{p_-}} \right\rbrace.
\end{equation*}
Since $d \leq 1$, 
\begin{equation*}
\begin{split}
\varrho_{L^{\frac{p(\cdot,\cdot)}{p(\cdot,\cdot)-q}}(\Omega \times \Omega')}(W)
&= \int_{\Omega'} \int_{\Omega} \frac{|x-y|^{(s-\sigma)\frac{p(x,y)q}{p(x,y)-q}}}{|x-y|^n} \,\mathrm{d}y \,\mathrm{d}x \\
&\leq \int_{\Omega'} \int_{\Omega} \frac{|x-y|^{(s-\sigma)\frac{p_+q}{p_+-q}}}{|x-y|^n} \,\mathrm{d}y \,\mathrm{d}x \\
&\leq \frac{p_+-q}{(s-\sigma)p_+q} |\mathbb{S}^{n-1}| |\Omega'| d^{(s-\sigma)\frac{p_+q}{p_+-q}}.
\end{split}
\end{equation*}
Therefore, combining the previous estimates finishes the proof.
\end{proof}

As mentioned before, we will prove \Cref{thm:local_boundedness2} by using the De Giorgi iteration technique. For this purpose, we need the following lemma.

\begin{lemma} \label{lem:iteration}
Suppose that a sequence $\lbrace Y_j \rbrace_{j=0}^{\infty}$ of nonnegative numbers satisfies the recursion relation
\begin{equation*}
Y_{j+1} \leq C b^{j} \max \left\lbrace Y_j^{1+\beta_1}, Y_j^{1+\beta_2}, \dots, Y_j^{1+\beta_N} \right\rbrace
\end{equation*}
for some constants $C \geq 1$, $b > 1$, $N \in \mathbb{N}$, and $\beta_1 \geq \beta_2 \geq \dots \geq \beta_N > 0$. If
\begin{equation} \label{eq:Y_0}
Y_0 \leq \min \left\lbrace C^{-\frac{1}{\beta_N}} b^{-\frac{1}{\beta_N^2}}, C^{-\frac{1}{\beta_1}} \right\rbrace,
\end{equation}
then
\begin{equation} \label{eq:Y_j}
Y_j \leq C^{-\frac{1}{\beta_N}} b^{-\frac{1}{\beta_N^2} - \frac{j}{\beta_N}} \quad\text{for all}~ j \geq 0,
\end{equation}
and, consequently, $Y_j \to 0$ as $j \to \infty$.
\end{lemma}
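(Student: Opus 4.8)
The plan is to run the standard De~Giorgi fast-decay induction, after first reducing the multi-exponent recursion to the classical single-exponent one. The crucial observation is that whenever $Y_j \le 1$, the ordering $\beta_1 \ge \beta_2 \ge \dots \ge \beta_N > 0$ gives $Y_j^{1+\beta_1} \le Y_j^{1+\beta_2} \le \dots \le Y_j^{1+\beta_N}$, so that $\max\{Y_j^{1+\beta_1}, \dots, Y_j^{1+\beta_N}\} = Y_j^{1+\beta_N}$ and the recursion collapses to $Y_{j+1} \le C b^j Y_j^{1+\beta_N}$; hence only the smallest exponent $\beta_N$ should govern the decay, which matches the claimed bound \eqref{eq:Y_j}. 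I would also note in passing that, since $C \ge 1$ and $b > 1$, one has $C^{-1/\beta_N} b^{-1/\beta_N^2} \le C^{-1/\beta_N} \le C^{-1/\beta_1}$, so that \eqref{eq:Y_0} is in fact just the requirement $Y_0 \le C^{-1/\beta_N} b^{-1/\beta_N^2}$.

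Next I would introduce the comparison sequence $A_j := C^{-1/\beta_N} b^{-1/\beta_N^2 - j/\beta_N}$ and record two elementary facts. First, $A_j \le A_0 = C^{-1/\beta_N} b^{-1/\beta_N^2} < 1$, because $C^{-1/\beta_N} \le 1$ and $b^{-1/\beta_N^2} < 1$. Second, a direct computation using the identity $\left( 1/\beta_N^2 + j/\beta_N \right)(1+\beta_N) = 1/\beta_N^2 + (j+1)/\beta_N + j$ shows that $C b^j A_j^{1+\beta_N} = A_{j+1}$.

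Then I would prove \eqref{eq:Y_j} by induction on $j$. The base case $j = 0$ is exactly hypothesis \eqref{eq:Y_0}. For the inductive step, assuming $Y_j \le A_j$, the bound $A_j < 1$ yields $Y_j \le 1$, so the collapse above together with monotonicity of $t \mapsto t^{1+\beta_N}$ gives
\[
Y_{j+1} \le C b^j Y_j^{1+\beta_N} \le C b^j A_j^{1+\beta_N} = A_{j+1},
\]
which closes the induction. Since $b > 1$ and $\beta_N > 0$, the exponent $-1/\beta_N^2 - j/\beta_N \to -\infty$ as $j \to \infty$, so $A_j \to 0$, and therefore $Y_j \to 0$.

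I do not anticipate a real obstacle: this is the classical geometric-convergence lemma once the maximum has been resolved into a single power. The only point demanding a little care is ensuring the iterates stay $\le 1$ at every step, so that the reduction to the exponent $1 + \beta_N$ is legitimate throughout — but this is automatic, since the induction itself delivers $Y_j \le A_j < 1$ for all $j$.
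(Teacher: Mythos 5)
Your proof is correct and follows essentially the same strategy as the paper's: run the induction after observing that $Y_j \le 1$ collapses the maximum to the single smallest power $Y_j^{1+\beta_N}$. The only cosmetic difference is that you compare against an explicit sequence $A_j$ and verify the one-step identity $C b^j A_j^{1+\beta_N} = A_{j+1}$, whereas the paper writes the bound out directly in terms of $Y_0$, namely $Y_j \le C^{\frac{(1+\beta_N)^j - 1}{\beta_N}} b^{\frac{(1+\beta_N)^j - 1}{\beta_N^2} - \frac{j}{\beta_N}} Y_0^{(1+\beta_N)^j}$, and then feeds in the smallness of $Y_0$; these are the same computation packaged differently. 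You also make a genuine (if minor) tidying observation that the paper does not: since $C \ge 1$ and $b > 1$, one has $C^{-1/\beta_N} b^{-1/\beta_N^2} \le C^{-1/\beta_N} \le C^{-1/\beta_1}$, so the minimum in \eqref{eq:Y_0} is always attained by the first term and in particular forces $Y_0 < 1$; the paper nonetheless treats a separate case $Y_0 \ge 1$ (for which it needs the second term of the minimum), but under the stated hypotheses that case never occurs. Your remark thus shows the second entry of the minimum in \eqref{eq:Y_0}, and the paper's second case, are both redundant as written.
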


\begin{proof}
When $Y_0 \leq 1$, one can easily prove by induction that
\begin{equation*}
Y_j \leq C^{\frac{(1+\beta_N)^j-1}{\beta_N}} b^{\frac{(1+\beta_N)^j-1}{\beta_N^2}-\frac{j}{\beta_N}} Y_0^{(1+\beta_N)^j} \leq 1, \quad \text{for all}~ j \geq 0,
\end{equation*}
under the assumption $Y_0 \leq C^{-\frac{1}{\beta_N}} b^{-\frac{1}{\beta_N^2}}$. This yields \eqref{eq:Y_j}.

When $Y_0 \geq 1$, a similar argument shows that
\begin{equation*}
Y_j \leq C^{\frac{(1+\beta_N)^j-1}{\beta_N}} b^{\frac{(1+\beta_N)^j-1}{\beta_N^2}-\frac{j}{\beta_N}} Y_0^{(1+\beta_1)(1+\beta_N)^{j-1}} \leq 1, \quad \text{for all}~ j \geq 1,
\end{equation*}
under the assumption \eqref{eq:Y_0}. This also proves \eqref{eq:Y_j}.
\end{proof}

We are now in a position to prove \Cref{thm:local_boundedness2} by using \Cref{thm:Caccioppoli}, \Cref{lem:embedding}, and \Cref{lem:iteration}.

\begin{proof} [Proof of \Cref{thm:local_boundedness2}]
Let $x_0 \in \Omega$ with $p(x_0,x_0) \leq n/s$. By the continuity of $p$, we can take $R \in (0,1/2)$ sufficiently small such that $B_{R}(x_0) \subset \Omega$ and
\begin{equation} \label{eq:p_close}
p_+ < p_-^{\ast} := \frac{np_-}{n-\sigma p_-},
\end{equation}
where $p_\pm = p_\pm(B_{R}(x_0) \times B_{R}(x_0))$. Note that $\sigma p_- < s p_- \leq n$.

We fix $k \in \R$ and $\tilde{k} \in \R^+$. In order to use the De Giorgi iteration, we set for each $j \in \mathbb{N} \cup \lbrace 0 \rbrace$
\begin{equation*}
r_j = \frac{1}{2}(1+2^{-j})R, \quad \tilde{r}_j = \frac{r_j+r_{j+1}}{2}, \quad k_j = k+(1-2^{-j}) \tilde{k}, \quad\text{and}\quad \tilde{k}_j = \frac{k_{j+1}+k_j}{2},
\end{equation*}
and define
\begin{equation*}
w_j = (u-k_j)_+ \quad\text{and}\quad \tilde{w}_j = (u-\tilde{k}_j)_+.
\end{equation*}
For simplicity, we write $B_j = B(x_0, r_j)$ and $\tilde{B}_j = B(x_0, \tilde{r}_j)$.

By \eqref{eq:p_close}, we can choose a constant $q$ such that
\begin{equation} \label{eq:pq_close}
\max\left\lbrace p_+, \frac{n}{n-\sigma} \right\rbrace < q < p_-^{\ast}.
\end{equation}
Then, $q = t^{\ast} = \frac{nt}{n-\sigma t}$ for some $1 < t < p_- \leq n/\sigma$. By applying \Cref{thm:sobolev} to $\tilde{w}_j$ in $\tilde{B}_j$, we have
\begin{equation*}
\|\tilde{w}_j\|_{L^{q}(\tilde{B}_j)} \leq C \|\tilde{w}_j\|_{W^{\sigma, t}(\tilde{B}_j)}
\end{equation*}
for some $C > 0$ depending on $\tilde{r}_j$. Since $\tilde{r}_j \in [R/2, R]$, we may assume that $C$ depends on $R$, but not on $j$, with a possibly larger constant $C$. Since the quantities $n$, $s$, $\sigma$, $p_+(B_R \times \mathbb{R}^n)$, $p_-(B_R \times B_R)$, $q$, and $R$ are not important for the iteration, we will absorb these quantities into constants $C$. Moreover, using \Cref{lem:embedding} and \Cref{lem:Holder}, we have
\begin{equation*}
\|\tilde{w}_j \|_{W^{\sigma, t}(\tilde{B}_j)} \leq C \|\tilde{w}_j \|_{W^{s,p(\cdot,\cdot)}(\tilde{B}_j)}
\end{equation*}
for some $C = C(n, s, \sigma, p_+, p_-, q, R) > 0$.
By \Cref{lem:modular},
\begin{equation} \label{eq:max}
\begin{split}
\|\tilde{w}_j\|_{W^{s,p(\cdot,\cdot)}(\tilde{B}_j)}
&\leq 2|\tilde{w}_j|_{W^{s,p(\cdot,\cdot)}(\tilde{B}_j)} \\
&\leq 2\max \left\lbrace \tilde{\varrho}_{W^{s, p(\cdot,\cdot)}(\tilde{B}_j)}(\tilde{w}_j)^{1/p_-}, \tilde{\varrho}_{W^{s, p(\cdot,\cdot)}(\tilde{B}_j)}(\tilde{w}_j)^{1/p_+}\right\rbrace.
\end{split}
\end{equation}
We set $A_{h, r}^+ = B_r \cap \lbrace u > h \rbrace$ and 
\begin{equation*}
Y_j = \fint_{B_j} w_j^{p_+}(x) \,\mathrm{d}x,
\end{equation*}
and estimate $\tilde{\varrho}_{W^{s, p(\cdot,\cdot)}(\tilde{B}_j)}(\tilde{w}_j)$ in terms of $j$ and $Y_j$.

By \Cref{thm:Caccioppoli}, we have
\begin{equation*}
\begin{split}
&\tilde{\varrho}_{W^{s, p(\cdot,\cdot)}(\tilde{B}_j)}(\tilde{w}_j) \\
&\leq C 2^{jp_+} \int_{B_j} \int_{B_j} \frac{\tilde{w}_j(x)^{p(x,y)}}{|x-y|^{n+sp(x,y)}} |x-y|^{p(x,y)} \,\mathrm{d}y \,\mathrm{d}x \\
&\quad + C \left( \sup_{x \in B(x_0,\frac{1}{2}(r_j+\tilde{r}_j))} \int_{\R^n \setminus B_j} \frac{\tilde{w}_j(y)^{p(x,y)-1}}{|y-x_0|^{n+sp(x,y)}} \left( \frac{4r_j}{r_j-r_{j+1}} \right)^{n+sp(x,y)} \, \d y \right) \int_{B_j} \tilde{w}_j(x) \,\mathrm{d}x \\
&=: I_1 + I_2.
\end{split}
\end{equation*}
Since $\tilde{w}_j = 0$ on $B_j \setminus A_{\tilde{k}_j, r_j}^+$, $\tilde{w}_j \leq w_j$, and $R < 1/2$, we estimate $I_1$ as follows:
\begin{equation*}
\begin{split}
I_1
&\leq C 2^{jp_+} \int_{A_{\tilde{k}_j, r_j}^+} \int_{B_j} (w_j(x)^{p_+}+1) \frac{|x-y|^{(1-s)p_-}}{|x-y|^n} \,\mathrm{d}y \,\mathrm{d}x \\
&\leq C 2^{jp_+} \left( \int_{A_{\tilde{k}_j, r_j}^+} w_j(x)^{p_+} \,\d x + |A_{\tilde{k}_j, r_j}^+| \right) \\
&\leq C 2^{jp_+} \left(Y_j + |A_{\tilde{k}_j, r_j}^+| \right).
\end{split}
\end{equation*}
For $I_2$, we use $p(x,y) \leq p_+(B_R \times \mathbb{R}^n)$ and $(\tilde{k}_j - k_j)^{p_+-1} \tilde{w}_j \leq w_j^{p_+}$. Then,
\begin{equation*}
\begin{split}
I_2
&\leq C 2^{j(n+sp_+(B_R\times\mathbb{R}^n))} \left( \sup_{x \in B_{R}} \int_{\R^n \setminus B_{R/2}} \frac{\tilde{w}_j(y)^{p(x,y)-1}}{|y-x_0|^{n+sp(x,y)}} \, \d y \right) \int_{B_j} \frac{w_j^{p_+}(x)}{(\tilde{k}_j-k_j)^{p_+ -1}} \,\mathrm{d}x \\
&\leq C 2^{j(n+sp_+(B_R \times \mathbb{R}^n))} \left( \sup_{x \in B_{R}} \int_{\R^n \setminus B_{R/2}} \frac{w_0(y)^{p(x,y)-1}}{|y-x_0|^{n+sp(x,y)}} \, \d y \right) \left( \frac{2^{j+2}}{\tilde{k}} \right)^{p_+-1} \int_{A_{k_j, r_j}^+} w_j^{p_+} \,\mathrm{d}x \\
&\leq C 2^{j(n+2p_+(B_R \times \mathbb{R}^n))} \frac{T}{\tilde{k}^{p_+-1}} Y_j,
\end{split}
\end{equation*}
where
\begin{equation*}
T = \sup_{x \in B_{R}} \int_{\R^n \setminus B_{R/2}} \frac{u_+(y)^{p(x,y)-1}}{|y-x_0|^{n+sp(x,y)}} \, \d y.
\end{equation*}
Combining the estimates above and using
\begin{equation*}
|A_{\tilde{k}_j, r_j}^+| \leq \frac{1}{(\tilde{k}_j - k_j)^{p_+}} \int_{A_{\tilde{k}_j, r_j}} w_j^{p_+} \,\d x \leq C \left( \frac{2^{j}}{\tilde{k}} \right)^{p_+} Y_j,
\end{equation*}
yield that
\begin{equation*}
\tilde{\varrho}_{W^{s, p(\cdot,\cdot)}(\tilde{B}_j)}(\tilde{w}_j) \leq C 2^{j(n+2p_+(B_R \times \mathbb{R}^n)-1)} \left( 1+ \frac{1}{\tilde{k}^{p_+}} + \frac{T}{\tilde{k}^{p_+-1}} \right) Y_j.
\end{equation*}
Assuming
\begin{equation} \label{eq:tilde_k}
\tilde{k} \geq T^{1/(p_+-1)} + 1,
\end{equation}
we arrive at
\begin{equation} \label{eq:modulus}
\tilde{\varrho}_{W^{s, p(\cdot,\cdot)}(\tilde{B}_j)}(\tilde{w}_j) \leq C 2^{j(n+2p_+(B_R \times \mathbb{R}^n))} Y_j.
\end{equation}

On the other hand, recalling that \eqref{eq:pq_close} holds, we have $\tilde{w}_j^{q} \geq (k_{j+1}-\tilde{k}_j)^{q-p_+} w_{j+1}^{p_+}$, and hence
\begin{equation} \label{eq:q_lower}
\begin{split}
\|\tilde{w}_j\|_{L^{q}(\tilde{B}_j)}^{q} \geq \|\tilde{w}_j\|_{L^{q}(B_{j+1})}^{q} \geq c(2^{-j}\tilde{k})^{q-p_+} Y_{j+1}.
\end{split}
\end{equation}
Therefore, from \eqref{eq:max}, \eqref{eq:modulus}, and \eqref{eq:q_lower}, we deduce 
\begin{equation*}
Y_{j+1} \leq C \tilde{k}^{p_+-q} b^j \max \left\lbrace Y_j^{1+\beta_1}, Y_j^{1+\beta_2} \right\rbrace,
\end{equation*}
where $\beta_1 = q/p_- -1 > 0$, $\beta_2 = q/p_+ - 1 > 0$, $b = 2^{q-p_+ + (n+2p_+(B_R \times \mathbb{R}^n))q/p_-}$, and $C > 0$ is a constant depending only on $n$, $s$, $\sigma$, $p_+$, $p_-$, $q$, and $R$. 

By \Cref{lem:iteration}, if
\begin{equation} \label{eq:J_0}
Y_0 \leq \min \left\lbrace (C\tilde{k}^{p_+-q})^{-\frac{1}{\beta_2}} b^{-\frac{1}{\beta_2^2}}, (C\tilde{k}^{p_+-q})^{-\frac{1}{\beta_1}} \right\rbrace,
\end{equation}
then $Y_j \to 0$ as $j \to \infty$. Thus, if we take
\begin{equation} \label{eq:tilde_k_max}
\tilde{k} \geq \max \left\lbrace \left( C^{\frac{1}{\beta_2}}b^{\frac{1}{\beta_2^2}} Y_0 \right)^{\frac{1}{p_+}}, \left( C^{\frac{1}{\beta_1}} Y_0 \right)^{\frac{1}{p_-} \frac{q-p_-}{q-p_+}} \right\rbrace,
\end{equation}
then \eqref{eq:J_0} is satisfied, and hence
\begin{equation*}
\sup_{B_{R/2}} u \leq k+\tilde{k}.
\end{equation*}
Note that the choice
\begin{equation*}
\tilde{k} = C_0 \max \left\lbrace \left( \fint_{B_{R}} w_0^{p_+}(x) \,\mathrm{d}x \right)^{\frac{1}{p_+}}, \left( \fint_{B_{R}} w_0^{p_+}(x) \,\mathrm{d}x \right)^{\frac{1}{p_-} \frac{q-p_-}{q-p_+}} \right\rbrace +T^{1/(p_+-1)}+1
\end{equation*}
with
\begin{equation*}
C_0 = \max \left\lbrace \left( C^{\frac{1}{\beta_2}}b^{\frac{1}{\beta_2^2}} \right)^{\frac{1}{p_+}}, \left( C^{\frac{1}{\beta_1}} \right)^{\frac{1}{p_-} \frac{q-p_-}{q-p_+}} \right\rbrace
\end{equation*}
is in accordance with \eqref{eq:tilde_k} and \eqref{eq:tilde_k_max}. The constant $C_0$ depends on $n$, $s$, $\sigma$, $p_+(B_R \times \mathbb{R}^n)$, $p_-(B_R\times B_R)$, $q$, and $R$. We finish the proof by choosing $k=0$.
\end{proof}

Let us conclude this section with the proof of \Cref{thm:local_boundedness} by using \Cref{thm:local_boundedness2} and \Cref{thm:morrey}.

\begin{proof} [Proof of \Cref{thm:local_boundedness}]
Suppose that $u \in W^{s, p(\cdot,\cdot)}(\mathbb{R}^n)$ is a weak subsolution to \eqref{eq:EL} in $\Omega$ satisfying \eqref{eq:tail}. Let us fix $x_0 \in \Omega$. If $p(x_0, x_0) \leq n/s$, then by \Cref{thm:local_boundedness2},
\begin{equation*}
\sup_{B_{R/2}(x_0)} u < +\infty
\end{equation*}
for some $R \in (0,1)$. If $p(x_0, x_0) > n/s$, then the fact that $u \in W^{s, p(\cdot,\cdot)}(\Omega)$ implies that $u$ is bounded in a neighborhood of $x_0$. Indeed, by the continuity of $p$, we can take $R > 0$ such that $\overline{B_{R}(x_0)} \subset \Omega$ and
\begin{equation*}
p_- := p_-(B_{R}(x_0) \times B_{R}(x_0)) > \frac{n}{\sigma}
\end{equation*}
for $\sigma \in (0, s)$ sufficiently close to $s$. Let $q \in (n/\sigma, p_-)$, then by \Cref{thm:morrey}
\begin{equation*}
\|u\|_{C^\alpha(\overline{B_{R}(x_0)})} \leq \|u\|_{W^{\sigma, q}(B_{R}(x_0))}.
\end{equation*}
Moreover, by \Cref{lem:embedding} and \Cref{lem:Holder}, we obtain
\begin{equation*}
\|u\|_{W^{\sigma, q}(B_{R}(x_0))} \leq C \|u\|_{W^{s, p(\cdot,\cdot)}(B_{R}(x_0))} < +\infty.
\end{equation*}
Therefore, 
\begin{equation*}
\|u\|_{L^\infty(B_{R}(x_0))} \leq \|u\|_{C^\alpha(\overline{B_{R}(x_0)})} <+\infty.
\end{equation*}

When $u \in W^{s, p(\cdot,\cdot)}(\mathbb{R}^n)$ is a weak supersolution, $-u$ is a weak subsolution and hence $\inf_{B_{R}(x_0)} u = -\sup_{B_{R}(x_0)} (-u) > -\infty$ for some $R > 0$.
\end{proof}

\section{H\"older estimate} \label{sec:Holder}
This section is devoted to the proof of the local H\"older regularity of weak solutions to \eqref{eq:EL}. In this part of the paper, the assumptions \eqref{eq:P1} and \eqref{eq:P2} on $p$ take an important role for our analysis. The key step in establishing the local H\"older regularity is a growth lemma, see \Cref{lem:growth}. We start with an auxiliary result that is needed in the proof of the growth lemma. 
\begin{lemma} \label{lemma:ellest}
Let $B_R = B_R(x_0) \subset \mathbb{R}^n$ with $R \in (0,1)$. Let $H > 0$, $\delta\in(0,1/8]$ and $0<\sigma < s <1$. Assume that $p$ satisfies \eqref{eq:P1} and \eqref{eq:P2} in $B_R$, $H^{p_+ - p_-} \leq 2$, and $p_+ < p_-^\ast$, where $p_\pm = p_\pm(B_R \times B_R)$ and $p_-^\ast = \frac{np_-}{n-\sigma p_-}$. Let $u \in W^{s, p(\cdot,\cdot)}(\mathbb{R}^n)$ be a weak supersolution to \eqref{eq:EL} in $B_R$ such that
\begin{equation} \label{eq:assumption1}
0 \leq u \leq 2H \quad\text{in}~ B_R \quad\text{and}\quad |B_{R/2} \cap \lbrace u \geq H \rbrace| \geq \gamma |B_{R/2}|
\end{equation}
for some $\gamma \in (0,1)$. Assume $R^s \leq \delta H$ and
\begin{equation} \label{eq:assumption_tail2}
\sup_{x \in B_{3R/4}} \int_{\mathbb{R}^n \setminus B_R} \frac{u_-(y)^{p(x,y)-1}}{|y-x_0|^{n+sp(x,y)}} \,\mathrm{d}y \leq R^{-sp_+} (\delta H)^{p_+-1} + R^{-sp_-} (\delta H)^{p_--1}.
\end{equation}
Let $1 \leq q < p_- $.
Then there is a constant $C=C(n, s, \sigma, p_+(B_R \times \mathbb{R}^n), p_-(B_R \times \mathbb{R}^n), q, L) > 0$ such that for any $\ell\in[2\delta H,  H]$
\begin{equation*}
[(u-\ell)_{-}]_{W^{\sigma,q}(B_{R/2})}^q \leq C \ell^q R^{-\sigma q} \max \left\lbrace |A_{\ell, R}^-|, |A_{\ell, R}^-|^{1+\frac{q}{p_-}-\frac{q}{p_+}}, |A_{\ell, R}^-|^{1+\frac{q}{p_+} - \frac{q}{p_-}} \right\rbrace,
\end{equation*}
where $A_{\ell, R}^- = B_R \cap \lbrace u < \ell \rbrace$.
\end{lemma}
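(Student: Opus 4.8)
The plan is to combine the improved Caccioppoli estimate (\Cref{thm:Caccioppoli}, in its supersolution form) with the embedding \Cref{lem:embedding} and the modular inequalities of \Cref{lem:modular}. Set $v:=(u-\ell)_-=(\ell-u)_+$; since $-u$ is a weak subsolution, applying the supersolution version of \eqref{eq:Caccioppoli} with $k=\ell$ on the pair $B_{R/2}\Subset B_R$ gives a bound for $\varrho_{W^{s,p(\cdot,\cdot)}(B_{R/2})}(v)$. Before estimating its right-hand side I would record the elementary facts used throughout: $0\le v\le \ell$ and $v\equiv 0$ outside $A^-_{\ell,R}$ on $B_R$; from $R^s\le\delta H$ and $\ell\ge 2\delta H$ one gets $\ell\ge 2R^s$, hence $\ell R^{-s}\ge 2$; and $p_\pm(B_{R/2}\times B_{R/2})$ lies between $p_-(B_R\times B_R)$ and $p_+(B_R\times B_R)$, so the constants produced by \Cref{thm:Caccioppoli} may be taken to depend only on $p_\pm(B_R\times\R^n)$.

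Next I would estimate the two terms on the right of \eqref{eq:Caccioppoli}. For the ``local'' term I absorb the factor $|x-y|^{(1-s)p(x,y)}$ into $|v(x)/(R/2)|^{p(x,y)}$, bound the resulting $\big(2v(x)|x-y|^{1-s}/R\big)^{p(x,y)}$ by its value with exponent $p_-$ plus its value with exponent $p_+$, integrate the remaining kernel $|x-y|^{-n}$ over $y\in B_R$ (convergent since the total exponent on $|x-y|$ is $-n+(1-s)p(x,y)>-n$), and use $R<1$; this produces $C\big(\ell^{p_-}R^{-sp_-}+\ell^{p_+}R^{-sp_+}\big)|A^-_{\ell,R}|$. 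For the ``tail'' term, $\big(2R/(R/2)\big)^{n+sp(x,y)}=4^{\,n+sp(x,y)}\le C$; splitting $v(y)\le \ell+u_-(y)$ for $y\notin B_R$, the $\ell$-part is controlled by $\max\{\ell^{p_+-1},\ell^{p_--1}\}\,\sup_x\int_{\R^n\setminus B_R}|y-x_0|^{-n-sp(x,y)}\,\d y\le CR^{-sp_+}\max\{\ell^{p_+-1},\ell^{p_--1}\}$, while the $u_-$-part is handled by \eqref{eq:assumption_tail2} together with $\delta H\le\ell/2$; multiplying by $\int_{B_R}v\le\ell|A^-_{\ell,R}|$ gives $CR^{-sp_+}\max\{\ell^{p_+},\ell^{p_-}\}|A^-_{\ell,R}|$. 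Now I invoke the hypotheses to unify the exponents: \eqref{eq:P1} gives $R^{-(p_+-p_-)}\le L$, hence $R^{-sp_+}\le L^sR^{-sp_-}$, and $H^{p_+-p_-}\le2$ with $\ell\le H$ gives $\ell^{p_+}\le2\ell^{p_-}$; combining yields
\begin{equation*}
\varrho_{W^{s,p(\cdot,\cdot)}(B_{R/2})}\big((u-\ell)_-\big)\le C\,\ell^{p_-}R^{-sp_-}\,|A^-_{\ell,R}|,
\end{equation*}
with $C=C(n,s,p_+(B_R\times\R^n),p_-(B_R\times\R^n),L)$.

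Finally I pass from this modular estimate to the $W^{\sigma,q}$-seminorm. Since $(u-\ell)_-$ vanishes on $B_{R/2}\setminus A^-_{\ell,R}$, symmetry gives $[(u-\ell)_-]^q_{W^{\sigma,q}(B_{R/2})}\le 2\int_{A^-_{\ell,R/2}}\int_{B_{R/2}}|(u-\ell)_-(x)-(u-\ell)_-(y)|^q|x-y|^{-n-\sigma q}\,\d y\,\d x$, and \Cref{lem:embedding} applied with $\Omega=B_{R/2}$, $\Omega'=A^-_{\ell,R/2}$ (so $|\Omega'|\le|A^-_{\ell,R}|$ and $d=R<1$) bounds this by $C\max\big\{(|A^-_{\ell,R}|R^{\mu})^{1-q/p_+},(|A^-_{\ell,R}|R^{\mu})^{1-q/p_-}\big\}\,[(u-\ell)_-]^q_{W^{s,p(\cdot,\cdot)}(B_{R/2})}$ with $\mu=(s-\sigma)\tfrac{p_+q}{p_+-q}$; \Cref{lem:modular} then turns the remaining seminorm into $\max\{\varrho^{q/p_-},\varrho^{q/p_+}\}$, into which I substitute the bound on $\varrho$ just obtained. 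Expanding the two maxima produces four products whose exponents of $|A^-_{\ell,R}|$ are $1+q/p_--q/p_+$, $1$, $1$ and $1+q/p_+-q/p_-$ — exactly the three appearing in the statement — and whose exponents of $\ell$ reduce to $q$ (using $\ell\ge 2R^s$ and \eqref{eq:P1} to absorb the power $p_-q/p_+$). The exponents of $R$ equal $-\sigma q$ on the nose except on the products coming from the $1-q/p_-$ branch of \Cref{lem:embedding}, which carry an extra $R^{-\beta}$ with $\beta>0$; here I use $|A^-_{\ell,R}|\le|B_R|\le C_nR^{n}$, i.e. $R^{-\beta}\le C_n^{\beta/n}|A^-_{\ell,R}|^{-\beta/n}$, to trade the residual negative power of $R$ for a decrease of the $|A^-_{\ell,R}|$-exponent; for the admissible ranges of $q<p_-$ and $\sigma<s$ (where the quantitative constraint $p_+<p_-^\ast$ is used) this exponent stays within $[\,1+q/p_+-q/p_-,\,1+q/p_--q/p_+\,]$ and is thus still controlled by the stated maximum, giving the claimed inequality.

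The main obstacle is precisely this bookkeeping of exponents. Because $p$ is non-constant, the Caccioppoli right-hand side, the modular–seminorm conversion of \Cref{lem:modular}, and the embedding constant of \Cref{lem:embedding} each split into a ``$p_-$'' and a ``$p_+$'' version, and the three-fold maximum over powers of $|A^-_{\ell,R}|$ is exactly the residue of multiplying these branches together; the assumptions $R^s\le\delta H$, $H^{p_+-p_-}\le2$, and especially \eqref{eq:P1}, are what is needed to collapse the competing powers of $R$ to $R^{-\sigma q}$ and of $\ell$ to $\ell^{q}$. I expect essentially all the effort to go into tracking these exponents through the three steps, the genuinely nonlocal input — the tail term and its interaction with \eqref{eq:assumption_tail2} — being comparatively routine once the normalisation $\ell\ge 2\delta H$ is exploited.
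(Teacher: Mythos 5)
Your overall architecture is the same as the paper's: reduce the $W^{\sigma,q}$-seminorm to the modular $\varrho_{W^{s,p(\cdot,\cdot)}(B_{R/2})}((u-\ell)_-)$ via \Cref{lem:embedding} and \Cref{lem:modular}, bound the modular by \Cref{thm:Caccioppoli} on $B_{R/2}\Subset B_R$, and then track the $p_\pm$-branches through the resulting products. Two steps, however, do not go through as written. First, in the tail term you factor the $\ell$-part of $(u(y)-\ell)_-\le \ell+u_-(y)$ as $\ell^{p(x,y)-1}\le\max\{\ell^{p_+-1},\ell^{p_--1}\}$ times $\int_{\R^n\setminus B_R}|y-x_0|^{-n-sp(x,y)}\,\d y\le CR^{-sp_+}$. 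For $y\notin B_R$ the exponent $p(x,y)$ is constrained only by \eqref{eq:bounds} and the one-sided inequalities of \eqref{eq:P2}; it may well be smaller than $p_-(B_R\times B_R)$, and then for $\ell<1$ the bound $\ell^{p(x,y)-1}\le\ell^{p_--1}$ is false. Moreover the kernel bound on the annulus $R\le|y-x_0|\le1$ already requires $p_+(B_R\times B_R^c)\le p_+(B_R\times B_R)$, i.e.\ \eqref{eq:P2}, which your argument never invokes. The repair is to keep the product together, writing $\ell^{p(x,y)-1}|y-x_0|^{-n-sp(x,y)}=\bigl(\ell|y-x_0|^{-s}\bigr)^{p(x,y)}\ell^{-1}|y-x_0|^{-n}$, bound the first factor by its $p_\pm(B_R\times B_R^c)$-versions, and then use $\ell R^{-s}\ge1$ together with \eqref{eq:P2} to pass to $p_\pm(B_R\times B_R)$; this is exactly the paper's inequality \eqref{eq:complementtoinside} and is the one place where \eqref{eq:P2} enters this lemma.

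Second, and more seriously, the residual negative power of $R$. The two products coming from the $(|A_{\ell,R}^-|R^{\mu})^{1-q/p_-}$ branch of \Cref{lem:embedding} each carry $R^{-\beta}$ with $\beta=(s-\sigma)\tfrac{q^2(p_+-p_-)}{(p_+-q)p_-}>0$. Your proposed trade $R^{-\beta}\le C|A_{\ell,R}^-|^{-\beta/n}$ \emph{lowers} the exponent of $|A_{\ell,R}^-|$ strictly below $1+q/p_+-q/p_-$, the smallest of the three exponents in the asserted maximum; since $|A_{\ell,R}^-|$ can be arbitrarily small, a strictly smaller exponent yields a strictly larger quantity, so the result is not dominated by the stated maximum, and the hypothesis $p_+<p_-^\ast$ does not rescue this (the paper uses that hypothesis only in the iteration of \Cref{lem:growth}). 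The correct observation is that $\beta$ is a positive multiple of $p_+-p_-$, so \eqref{eq:P1} gives
\begin{equation*}
R^{-\beta}=\bigl(R^{p_--p_+}\bigr)^{(s-\sigma)q^2/((p_+-q)p_-)}\le L^{(s-\sigma)q^2/((p_+-q)p_-)},
\end{equation*}
a constant of the admissible form --- the same mechanism you already use for $R^{-sp_+}\le L^sR^{-sp_-}$. With these two corrections your bookkeeping of the four products (exponents $1$, $1$, $1\pm(q/p_--q/p_+)$ of $|A_{\ell,R}^-|$, with $\ell^{p_-q/p_+}$ promoted to $\ell^q$ via $\ell\ge2R^s$ and \eqref{eq:P1}) matches the paper's four cases.
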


\begin{proof}
Let $\ell\in[2\delta H,  H]$. The idea of the proof is to estimate $[(u-\ell)_{-}]_{W^{\sigma,q}(B_{R/2})}^q$ using \Cref{lem:embedding}
and then applying the Caccioppoli-type inequality to estimate $\varrho_{W^{s,p(\cdot,\cdot)}(B_{R/2})}((u-\ell)_-) $. 
In the following $C>0$ denotes a constant depending on $n$, $s$, $\sigma$, $p_{+}(B_R\times\R^n)$, $p_{-}(B_R\times\R^n)$, $q$, and $L$ whose exact value is not important and might change from line to line.

Let $r=R/2$. First, by \Cref{lem:embedding}
\begin{align*}
[(u-\ell)_{-}]_{W^{\sigma,q}(B_{r})}^q &\leq C\int_{B_{r}}\int_{B_{r}} \frac{|(u(x)-\ell)_{-} - (u(y)-\ell)_{-}|^q}{|x-y|^{n+\sigma q}} \, \d x\, \d y \\
& \leq C\int_{A_{\ell,r}^{-}}\int_{B_{r}} \frac{|(u(x)-\ell)_{-} - (u(y)-\ell)_{-}|^q}{|x-y|^{n+\sigma q}} \, \d x\, \d y \\
& \leq C \max \left\lbrace \left( |A_{\ell,r}^{-}| R^{(s-\sigma) \frac{p_+q }{p_+-q}} \right)^{\frac{p_+-q}{p_+}}, \left( |A_{\ell,r}^{-}| R^{(s-\sigma) \frac{p_+q }{p_+-q}} \right)^{\frac{p_--q}{p_-}} \right \rbrace \\
&\qquad \times [(u-\ell)_{-}]_{W^{s,p(\cdot, \cdot)}(B_{r})}^q \\
& \leq C \max \left\lbrace  |A_{\ell,r}^{-}|^{\frac{p_+-q}{p_+}} R^{q(s-\sigma)} ,  |A_{\ell,r}^{-}|^{\frac{p_--q}{p_-}} R^{(s-\sigma) \frac{p_+ q(p_--q)}{(p_+-q)p_-}} \right \rbrace \\
& \qquad \times \max\left\{ \left(\varrho_{W^{s,p(\cdot,\cdot)}(B_{r})}((u-\ell)_{-})\right)^{\frac{q}{p_{+}}}, \left(\varrho_{W^{s,p(\cdot,\cdot)}(B_{r})}((u-\ell)_{-})\right)^{\frac{q}{p_{-}}} \right\}.
\end{align*}
By \Cref{thm:Caccioppoli}, we can estimate $\varrho_{W^{s,p(\cdot,\cdot)}(B_{r})}((u-\ell)_-)$ as follows:
\begin{align*}
&\varrho_{W^{s,p(\cdot,\cdot)}(B_{r})}((u-\ell)_-) + \int_{B_r} (u-\ell)_-(x) \int_{B_r} \frac{(u(y)-\ell)_+^{p(x,y)-1}}{|x-y|^{n+sp(x,y)}} \,\mathrm{d}y \,\mathrm{d}x \\
&\leq C \int_{B_R} \int_{B_R} \left| \frac{(u(x)-\ell)_-}{R-r} \right|^{p(x,y)} \frac{\mathrm{d}y \,\mathrm{d}x}{|x-y|^{n-(1-s)p(x,y)}} \\
&\quad + C \left( \sup_{x \in B_{\frac{R+r}{2}}} \int_{\R^n \setminus B_R} \frac{(u(y)-\ell)_-^{p(x,y)-1}}{|y-x_0|^{n+sp(x,y)}} \,\left( \frac{2R}{R-r} \right)^{n+sp(x,y)} \d y \right)\int_{B_R} (u(x)-\ell)_- \,\mathrm{d}x \\
& =: I_1 +  I_2.
\end{align*}
First, we consider $I_1$. By the nonnegativity of $u$ in $B_R$,
\begin{align*}
I_1 & = C \int_{ A_{\ell,R}^{-}} \int_{B_R} \left| \frac{(u(x)-\ell)_-}{R-r} \right|^{p(x,y)} \frac{\mathrm{d}y \,\mathrm{d}x}{|x-y|^{n-(1-s)p(x,y)}} \\
& \leq C |A_{\ell,R}^{-}| \left( \left|\frac{\ell}{R-r}\right|^{p_{+}} + \left|\frac{\ell}{R-r}\right|^{p_{-}}\right) \left( R^{(1-s)p_{+}} + R^{(1-s)p_{-}} \right) \\
&\leq C |A_{\ell,R}^{-}| \left(R^{-sp_{+}}\ell^{p_{+}} + R^{-sp_{-}}\ell^{p_{-}}\right),
\end{align*}
where we used \eqref{eq:P1} in the last inequality.
Next, we study $I_2$. Note that by the assumption $R^s\leq \delta H\leq \ell$ and \eqref{eq:P2}, we have 
\begin{equation}\label{eq:complementtoinside}
 R^{-s p_{\pm}(B_R\times B_R^c)} \ell^{p_{\pm}(B_R\times B_R^c)}   \leq R^{-s p_{\pm}} \ell^{p_{\pm}}.
\end{equation}
Using the nonnegativity of $u$ in $B_R$, the tail estimate \eqref{eq:assumption_tail2}, and \eqref{eq:complementtoinside}:
\begin{align*}
I_2&\leq C \sup_{x \in B_{\frac{R+r}{2}}} \int_{\R^n \setminus B_R} \frac{(u(y)-\ell)_-^{p(x,y)-1}}{|y-x_0|^{n+sp(x,y)}} \, \d y \int_{B_R} (u(x)-\ell)_- \,\mathrm{d}x\\
 & \leq C \sup_{x \in B_{\frac{R+r}{2}}}\left(\int_{\R^n \setminus B_R} \frac{u(y)_-^{p(x,y)-1}}{|y-x_0|^{n+sp(x,y)}} \, \d y 
+\int_{\R^n \setminus B_R} \left(\frac{\ell}{|y-x_0|^{s}}\right)^{p(x,y)} \frac{\ell^{-1}}{|y-x_0|^n} \, \d y\right) \ell |A_{\ell,R}^{-}|   \\
& \leq C\ell |A_{\ell,R}^{-}|\Bigg(\left( R^{-sp_+} (\delta H)^{p_+-1} + R^{-sp_-} (\delta H)^{p_--1} \right)\\
& \quad +\left( \ell^{p_{+}(B_R\times B_R^c)-1}R^{-sp_{+}(B_R\times B_R^c)} +\ell^{p_{-}(B_R\times B_R^c)-1}R^{-sp_{-}(B_R\times B_R^c)}  \right)\Bigg) \\
&\leq C|A_{\ell,R}^{-}|\left(R^{-sp_{+}}\ell^{p_{+}} + R^{-sp_{-}}\ell^{p_{-}}\right).
\end{align*}

Hence, since $R<1$,
\begin{equation} \label{eq:modular}
\begin{split}
&\varrho_{W^{s,p(\cdot,\cdot)}(B_{R/2})}((u-\ell)_-) + \int_{B_r} (u-\ell)_-(x) \int_{B_r} \frac{(u(y)-\ell)_+^{p(x,y)-1}}{|x-y|^{n+sp(x,y)}} \,\mathrm{d}y \,\mathrm{d}x \\
&\leq C |A_{\ell,R}^{-}| \left(R^{-sp_{+}}\ell^{p_{+}} + R^{-sp_{-}}\ell^{p_{-}}\right)\\
&\leq C |A_{\ell,R}^{-}| R^{-sp_{+}}\left(\ell^{p_{+}} + \ell^{p_{-}}\right).
\end{split}
\end{equation}
Combining the previous estimates, we get 
\begin{align*}
 & R^{\sigma q}[(u-\ell)_{-}]_{W^{\sigma,q}(B_{R/2})}^q \\
 & \leq C R^{\sigma q} \max \left\lbrace  |A_{\ell,r}^{-}|^{\frac{p_+-q}{p_+}} R^{q(s-\sigma)} ,  |A_{\ell,r}^{-}|^{\frac{p_--q}{p_-}} R^{(s-\sigma) \frac{p_+ q(p_--q)}{(p_+-q)p_-}} \right \rbrace \\
 & \quad \times \max\left\{ \left( |A_{\ell,R}^{-}| R^{-sp_{+}}\left(\ell^{p_{+}} + \ell^{p_{-}}\right)\right)^{q/p_{+}}, \left( |A_{\ell,R}^{-}| R^{-sp_{+}}\left(\ell^{p_{+}} + \ell^{p_{-}}\right)\right)^{q/p_{-}}\right\} \\
 & =:CR^{\sigma q}\max\{\Upsilon_1,\Upsilon_2\}\max\{\Phi_1,\Phi_2\}.
\end{align*}
We need to check the four possible cases for that inequality. Before doing that, 
note that since $\ell\in[2\delta H,  H]$ and $R^s\leq \delta H$, there is a constant $C>0$ such that
\begin{equation}\label{eq:logholdestell}
1+\ell^{p_{-}-p_{+}} \leq 1+R^{s(p_{-}-p_{+})} \leq 1+L^s \leq C,
\end{equation}
where we used \eqref{eq:P1}.
\begin{enumerate}[\text{Case} 1:]
\item We have by \eqref{eq:logholdestell}
\[ R^{\sigma q}\Upsilon_1 \Phi_1 =  |A_{\ell, R}^{-}| \left(\ell^{p_{+}}+\ell^{p_{-}}\right)^{\frac{q}{p_{+}}} \leq C |A_{\ell, R}^{-}| \ell^q.\]
\item By $\ell\in[2\delta H,  H]$ and the assumptions $H^{p_{+}-p_-}\leq 2$ and \eqref{eq:P1}, we get
\begin{align*}
 R^{\sigma q}\Upsilon_1 \Phi_2= |A_{\ell, R}^{-}|^{1+ \frac{q}{p_{-}} - \frac{q}{p_{+}}} R^{\frac{sq}{p_{-}}(p_{-}-p_{+})} \left(\ell^{p_{+}}+\ell^{p_{-}}\right)^{\frac{q}{p_{-}}} \leq C|A_{\ell, R}^{-}|^{1+ \frac{q}{p_{-}} - \frac{q}{p_{+}}} \ell^q.
 \end{align*}
\item 
Using \eqref{eq:logholdestell} together with \eqref{eq:P1}, we get
\begin{align*}
 R^{\sigma q}\Upsilon_2 \Phi_1 =  |A_{\ell, R}^{-}|^{1- \frac{q}{p_{-}} + \frac{q}{p_{+}}} R^{(s-\sigma)\frac{q^2(p_--p_+)}{(p_{+}-q)p_{-}}}  \left(\ell^{p_{+}}+\ell^{p_{-}}\right)^{\frac{q}{p_{+}}} \leq  C|A_{\ell, R}^{-}|^{1- \frac{q}{p_{-}} + \frac{q}{p_{+}}} \ell^q.
\end{align*} 
\item By $H^{p_{+}-p_-}\leq 2$, and \eqref{eq:P1}, we get
\begin{align*}
 R^{\sigma q}\Upsilon_2 \Phi_2 = |A_{\ell, R}^{-}| R^{\frac{(sp_{+}-\sigma q) q}{p_{-}(p_{+}-q)}(p_{-}-p_{+})} \left(\ell^{p_{+}} + \ell^{p_{-}}\right)^{\frac{q}{p_{-}}}  \leq C|A_{\ell, R}^{-}| \ell^q.
\end{align*} 
\end{enumerate}
Combining the estimates from the previous four cases, proves the assertion of the lemma.
\end{proof}

We are now in a position to prove the growth lemma. 
It is the main ingredient for the proof of the local H\"older regularity estimate.
 \begin{lemma} \label{lem:growth}
Let $B_R = B_R(x_0) \subset \mathbb{R}^n$ with $R \in (0,1)$. Let $H > 0$, and $0<\sigma < s <1$. Assume that $p$ satisfies \eqref{eq:P1} and \eqref{eq:P2} in $B_R$, $H^{p_+ - p_-} \leq 2$, and $p_+ < p_-^\ast$, where $p_\pm = p_\pm(B_R \times B_R)$ and $p_-^\ast = \frac{np_-}{n-\sigma p_-}$. Let $u \in W^{s, p(\cdot,\cdot)}(\mathbb{R}^n)$ be a weak supersolution to \eqref{eq:EL} in $B_R$ such that
\eqref{eq:assumption1} is satisfied for some $\gamma\in(0,1)$. Then there exists $\delta\in(0,1/8]$, such that, if $R^s\leq \delta H$ and \eqref{eq:assumption_tail2} is satisfied, then
\begin{equation}\label{eq:lemgrowth}
 u \geq \delta H \qquad \text{in } B_{R/4}.
 \end{equation}
 The constant $\delta$ depends on $n$, $s$, $\sigma$, $p_+(B_R \times \mathbb{R}^n)$, $p_-(B_R \times \mathbb{R}^n)$, $q$, and $L$.
\end{lemma}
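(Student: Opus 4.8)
The plan is to run a De Giorgi iteration on the sequence of super-level sets for $(u-\ell)_-$, where the truncation levels $\ell_j$ interpolate between $H$ (at $j=0$) and $2\delta H$ (in the limit), and simultaneously the radii $\rho_j$ shrink from $R/2$ to $R/4$. The central quantity to iterate is $a_j := |A_{\ell_j,\rho_j}^-|/|B_{\rho_j}|$, the normalized measure of the set where $u<\ell_j$ inside $B_{\rho_j}$. The assumption \eqref{eq:assumption1} gives, at the initial step, that $u\geq H$ on a fixed fraction $\gamma$ of $B_{R/2}$, i.e. the \emph{complement} $|B_{R/2}\setminus A_{H,R/2}^-|\geq \gamma|B_{R/2}|$; this is exactly what \Cref{thm:homo_sobolev} needs, since $(u-\ell_j)_-$ vanishes on the set $\{u\geq \ell_j\}\supset\{u\geq H\}$, which has measure at least $\gamma|B_{R/2}|$. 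So the Sobolev inequality I would use is the homogeneous one, \Cref{thm:homo_sobolev}, with exponents $\sigma$ and $q$, giving $\|(u-\ell_j)_-\|_{L^{q^\ast}(B_{\rho_j})}\leq C[(u-\ell_j)_-]_{W^{\sigma,q}(B_{\rho_j})}$ where $q^\ast = nq/(n-\sigma q)$.

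Next I would combine this with \Cref{lemma:ellest}: for each admissible $\ell=\ell_j$ it bounds the right-hand side seminorm by $C\ell^q R^{-\sigma q}\max\{|A_{\ell,R}^-|, |A_{\ell,R}^-|^{1+q/p_--q/p_+}, |A_{\ell,R}^-|^{1+q/p_+-q/p_-}\}$. The left-hand side is bounded below in the usual De Giorgi way: on $A_{\ell_{j+1},\rho_{j+1}}^-$ one has $(u-\ell_j)_-\geq \ell_j-\ell_{j+1}$, and since the gaps $\ell_j-\ell_{j+1}$ are comparable to $2^{-j}\delta H$, Chebyshev gives $|A_{\ell_{j+1},\rho_{j+1}}^-|(\ell_j-\ell_{j+1})^{q^\ast}\leq \|(u-\ell_j)_-\|_{L^{q^\ast}(B_{\rho_j})}^{q^\ast}$. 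Putting the two together and dividing by $|B_{\rho_j}|$, one obtains a recursion of the form
\begin{equation*}
a_{j+1}\leq C\,b^{j}\,\max\bigl\{a_j^{1+\beta_1},\,a_j^{1+\beta_2},\,a_j^{1+\beta_3}\bigr\}
\end{equation*}
for suitable $b>1$ and exponents $\beta_i>0$ determined by $n,\sigma,q,p_\pm$ (the three exponents coming from the three terms in the $\max$ in \Cref{lemma:ellest}, after passing through the $q^\ast/q$ power from Sobolev). Here the scaling in $R$ and in $\ell\sim\delta H$ has to be tracked carefully: the factors $\ell^q R^{-\sigma q}$ from \Cref{lemma:ellest}, the $R^{-\sigma q}$ implicit in renormalizing by $|B_{\rho_j}|$ against the $\rho_j^{-\sigma q}$ in the seminorm, and the $(\ell_j-\ell_{j+1})^{-q^\ast}\sim (\delta H)^{-q^\ast}2^{jq^\ast}$ from Chebyshev must all cancel against the $q^\ast/q$-th power of $\ell^q\sim(\delta H)^q$ — this cancellation is where one needs $q^\ast/q\cdot q = q^\ast$ to match the Chebyshev power, and it is exactly this bookkeeping that forces the specific form of the exponents and shows the final constant is scale-invariant, so that the resulting $C,b$ depend only on the listed parameters and not on $R$ or $H$.

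Then I would invoke \Cref{lem:iteration}: it guarantees $a_j\to 0$ provided the smallness condition \eqref{eq:Y_0} on $a_0$ holds, namely $a_0\leq \min\{C^{-1/\beta_N}b^{-1/\beta_N^2},\,C^{-1/\beta_1}\}=:\nu$, a constant depending only on $n,s,\sigma,p_\pm(B_R\times\R^n),q,L$. But a priori $a_0$ need not be small — it is only bounded by $1-\gamma<1$. The fix is the standard one: choose $\delta$ small so that the constant $C$ multiplying the recursion, which picks up a power of $\delta$ from somewhere in the scaling (or, alternatively, iterate from a slightly smaller starting radius using the fact that we have freedom between $R/2$ and $R/4$ to run finitely many preliminary steps) absorbs the gap between $1-\gamma$ and $\nu$. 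Concretely I expect that the scaling produces an extra factor like $\delta^{c}$ (with $c>0$) or $\delta^{-c}$ in the constant, and one tunes $\delta=\delta(n,s,\sigma,p_\pm,q,L,\gamma)$ accordingly; alternatively, since $\gamma$ is fixed, one runs the iteration between levels $H$ and $(1+2^{-j})\delta H$ with enough intermediate steps so that, after the first finitely many, the level-set density has dropped below $\nu$ and the tail of the iteration converges. Once $a_j\to 0$, we get $|A_{2\delta H,R/4}^-\cap B_{R/4}|=0$, i.e. $u\geq 2\delta H\geq \delta H$ a.e.\ in $B_{R/4}$, which is \eqref{eq:lemgrowth}.

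The main obstacle, as in every De Giorgi growth lemma, is precisely this interplay between the a priori lower bound $|B_{R/2}\cap\{u\geq H\}|\geq\gamma|B_{R/2}|$ (which is a \emph{fixed} positive density, not a smallness hypothesis) and the smallness requirement \eqref{eq:Y_0} of \Cref{lem:iteration} on $a_0$. In the constant-exponent setting one usually handles this by an extra layer of iteration over the level (the "De Giorgi oscillation" step); here the situation is complicated by the fact that the recursion has \emph{three} competing powers $a_j^{1+\beta_i}$ rather than one, so one must make sure the smallest of the $\beta_i$ (call it $\beta_N$) controls the convergence and that the $\delta$-dependence of $C$ is favorable for \emph{all three} terms simultaneously. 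A secondary but real difficulty is verifying throughout the iteration that the hypotheses of \Cref{lemma:ellest} remain valid at every level $\ell_j\in[2\delta H,H]$ — in particular the tail bound \eqref{eq:assumption_tail2} and $R^s\leq\delta H$ are assumed once and must be shown to propagate, which they do because $\ell_j\leq H$ and the tail quantity is monotone in the truncation level, but this needs to be stated. Everything else — the algebraic manipulation of exponents, the use of \eqref{eq:P1} to replace $R^{p_--p_+}$ by the constant $L$, and the reduction modular $\leftrightarrow$ norm via \Cref{lem:modular} — is routine given the lemmas already in hand.
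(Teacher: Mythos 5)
Your outline of the iteration itself — \Cref{thm:homo_sobolev} combined with \Cref{lemma:ellest} to get a recursion $a_{j+1}\leq Cb^j\max\{a_j^{1+\beta_1},a_j^{1+\beta_2},a_j^{1+\beta_3}\}$, closed by \Cref{lem:iteration}, with \eqref{eq:P1} absorbing the mismatched powers of $R$ — is exactly the first half of the paper's proof. The genuine gap is in how you pass from the fixed density hypothesis \eqref{eq:assumption1} to the smallness of $a_0$ that \Cref{lem:iteration} requires. Neither of your two proposed fixes works. First, the recursion constant does \emph{not} pick up a favorable power of $\delta$: in \Cref{lemma:ellest} the factor is $\ell^q$ with $\ell\sim\delta H$, and this cancels exactly against the Chebyshev gaps $(\ell_j-\ell_{j+1})\sim 2^{-j}\delta H$, so $C$ and $b$ are $\delta$-independent and there is nothing to tune. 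Second, "running finitely many preliminary steps" cannot reduce the density either: a recursion of the form $a_{j+1}\leq Cb^ja_j^{1+\beta}$ with $a_0\approx 1-\gamma$ of order one gives no decay whatsoever — the upper bound simply grows — so the level-set density never "drops below $\nu$" from the recursion alone.

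What is actually needed is the second ingredient the paper advertises in the introduction: a fractional De Giorgi isoperimetric-type inequality (\cite[Proposition 5.1]{Coz17}). The paper isolates the precondition $|B_{R/2}\cap\{u<2\delta H\}|\leq\tau|B_{R/2}|$ (its \eqref{eq:vol}) and proves it by contradiction. Assuming the contrary, one applies the isoperimetric inequality to $(u-k_{i-1})_-$ at each dyadic level $k_i=2^{-i}H$, $i=1,\dots,m-2$, where $2^{-m-1}\leq\delta<2^{-m}$: the sets $\{u\geq k_i\}$ and $\{u\leq k_{i+1}\}$ both have measure bounded below (by $\gamma|B_{R/2}|$ and $\tau|B_{R/2}|$ respectively), so the transition band $\{k_{i+1}<u\leq k_i\}$ must carry a definite amount of measure; summing over the $\sim|\log\delta|$ disjoint bands forces $|B_{R/2}\cap\{u<2\delta H\}|\lesssim|\log\delta|^{-\theta}|B_{R/2}|$, contradicting the assumption once $\delta$ is small. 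This is where the dependence of $\delta$ on $\gamma$ (through $\tau$) really enters. Moreover, since Cozzi's isoperimetric inequality is only available for $s\geq\bar s$, the paper needs a separate argument for $s\in(0,\bar s)$, which uses the cross term $\int w_-\!\int w_+^{p(x,y)-1}|x-y|^{-n-sp(x,y)}$ on the left-hand side of the Caccioppoli estimate \eqref{eq:Caccioppoli} to derive the same contradiction directly. Your proposal omits both of these steps, and the "De Giorgi oscillation" layer you allude to is not a substitute for them.
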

\begin{proof}
The proof follows the ideas of \cite[Proof of Lemma 6.3]{Coz17}. 
Let $0<\delta<1/8$ and $0<\tau\leq 2^{-n-1}$ to be specified later. We first suppose 
\begin{equation}\label{eq:vol}
|B_{R/2} \cap \lbrace u < 2\delta H \rbrace| \leq \tau |B_{R/2}|
\end{equation}
and prove the assertion of the lemma under this additional assumption. Afterwards we prove that this precondition \eqref{eq:vol} is indeed a consequence of the given assumptions of the lemma.

We use $C>0$ for a constant depending on $n$, $s$, $\sigma$, $p_+(B_R \times \mathbb{R}^n)$, $p_-(B_R \times \mathbb{R}^n)$, $q$, and $L$ whose exact value is not important and that might change from line to line.

The idea to prove the assertion of the lemma is by iteration and the use of \Cref{lem:iteration}. For this purpose, we need to establish some auxiliary results. 
Let $\delta H\leq h < k \leq 2\delta H$ and $\frac{R}{4} \leq \rho < r \leq \frac{R}{2}$ . 
Note that by \eqref{eq:vol}, 
\begin{equation}\label{eq:vol12}
\begin{aligned}
|B_{\rho} \cap \{ (u-k)_{-} = 0 \} | &= |B_{\rho}\setminus \{u<k\}| \geq |B_{\rho}| - |B_{R/2} \cap\{u<2\delta H\}| \\
& \geq |B_{\rho}| - \tau |B_{R/2}|  = \left(1-\tau \left(\frac{R/2}{\rho}\right)^n\right)|B_{\rho}| \geq \left( 1-2^n\tau \right)|B_{\rho}| \\
& \geq \frac{1}{2}|B_{\rho}|.
\end{aligned}
\end{equation}
Using \eqref{eq:vol12}, \Cref{thm:homo_sobolev} and \Cref{lemma:ellest}, we have
\begin{equation}
\begin{aligned} \label{eq:tointerate}
(k-h) |A_{h,\rho}^{-}|^{\frac{n-\sigma}{n}}  &\leq \left(\int_{A_{h,\rho}^{-}} (k-u(x))^{\frac{n}{n-\sigma}} \, \d x  \right)^{\frac{n-\sigma}{n}} \\
& \leq \left(\int_{B_{\rho}} (u(x)-k)_{-}^{\frac{n}{n-\sigma}} \, \d x  \right)^{\frac{n-\sigma}{n}} \\
& \leq C\int_{B_{\rho}}\int_{B_{\rho}} \frac{|(u(x)-k)_{-} - (u(y)-k)_{-}|}{|x-y|^{n+\sigma}} \, \d x\, \d y \\
&\leq C k r^{-\sigma} \max \left\lbrace |A_{k, r}^-|, |A_{k, r}^-|^{1+\frac{1}{p_-}-\frac{1}{p_+}}, |A_{k,r}^-|^{1+\frac{1}{p_+} - \frac{1}{p_-}} \right\rbrace,
\end{aligned}
\end{equation}
where $A_{k, r}^- = B_r \cap \lbrace u < r \rbrace$. In the proceeding, we use \eqref{eq:tointerate} to prove the assertion of the lemma by iteration.
We define for $j\in \N\cup\{0\}$
\[ r_j = \frac{1}{4}(1+2^{-j})R, \quad k_j=(1+2^{-j})\delta H, \quad \text{and} \, \,   y_j = \frac{|A_{k_{j}, r_{j}}^{-}|}{|B_{r_j}|}. \]
Then $r_j \in (\frac{1}{4}R, \frac{1}{2}R]$ and $k_j\in (\delta H, 2\delta H]$. Choosing $k=k_{j}$, $h=k_{j+1}$, $\rho=r_{j+1}$, and $r=r_{j}$, we get from \eqref{eq:tointerate}
\[
\frac{\delta H}{2^{j+1}} \left(y_{j+1}|B_{r_{j+1}}|\right)^{\frac{n-\sigma}{n}} 
\leq C (\delta H) r_j^{-\sigma}\max\left\{ r_j^n y_j, (r_j^{n}y_j)^{1+\frac{p_{+}-p_{-}}{p_{+}p_{-}}}, (r_j^{n}y_j)^{1+\frac{p_{-}-p_{+}}{p_{+}p_{-}}} \right\},
\]
which leads to
\begin{equation}\label{eq:yj+1yj}
y_{j+1} \leq C \left( 2^j r_j^{-n}\max\left\{ r_j^n y_j, (r_j^{n}y_j)^{1+\frac{p_{+}-p_{-}}{p_{+}p_{-}}}, (r_j^{n}y_j)^{1+\frac{p_{-}-p_{+}}{p_{+}p_{-}}} \right\}\right)^{\frac{n}{n-\sigma}}. 
\end{equation} 
If we prove that there are $\beta_1, \beta_2,\beta_3 >0$ such that 
\begin{equation}\label{eq:aimineqbeta}
 y_{j+1} \leq C 2^{\frac{n}{n-\sigma}j} \max\left\{ y_j^{1+\beta_1},y_j^{1+\beta_2},y_j^{1+\beta_3}\right\}
\end{equation}
and $y_0$ is sufficiently small, then we can apply \Cref{lem:iteration} which would prove \eqref{eq:lemgrowth}.
We have three cases for the maximum in \eqref{eq:yj+1yj}:
\begin{enumerate}[\text{Case} 1:]
\item In the first case, we have
\[ y_{j+1} \leq C2^{\frac{n}{n-\sigma}j} y_{j}^{\frac{n}{n-\sigma}}. \]
Since $\frac{n}{n-\sigma}>1$, this proves the assertion in the first case.
\item In the second case, using $r_j\leq 1$ and the fact that its exponent is positive,
\[ y_{j+1} \leq C2^{\frac{n}{n-\sigma}j} r_j^{\frac{p_{+}-p_{-}}{p_{+}p_{-}}\frac{n}{n-\sigma}} y_{j}^{\frac{n}{n-\sigma} + \frac{p_{+}-p_{-}}{p_{+}p_{-}}\frac{n}{n-\sigma}} \leq C2^{\frac{n}{n-\sigma}j} y_{j}^{\frac{n}{n-\sigma} + \frac{p_{+}-p_{-}}{p_{+}p_{-}}\frac{n}{n-\sigma}}. \]
Since ${\frac{n}{n-\sigma} + \frac{p_{+}-p_{-}}{p_{+}p_{-}}\frac{n}{n-\sigma}} > {\frac{n}{n-\sigma}} >1$, we have proven the assertion in the second case.
\item In the third case, using \eqref{eq:P1}, we have
\[ y_{j+1} \leq C2^{\frac{n}{n-\sigma}j} r_j^{\frac{n}{(n-\sigma)p_{+}p_{-}}(p_{-}-p_{+})} y_{j}^{\frac{n}{n-\sigma}\left( 1 + \frac{1}{p_{+}} - \frac{1}{p_{-}}\right)} \leq  C2^{\frac{n}{n-\sigma}j} y_{j}^{\frac{n}{n-\sigma}\left( 1 + \frac{1}{p_{+}} - \frac{1}{p_{-}}\right)}.  \]
Note that by assumption $p_+ < p_-^\ast$, where $p_-^\ast = \frac{np_-}{n-\sigma p_-}$, which is equivalent to
\[ p_+ < p_-^\ast \iff \frac{\sigma}{n}>\frac{1}{p_{-}} - \frac{1}{p_{+}} \iff \frac{n}{n-\sigma}\left( 1 + \frac{1}{p_{+}} - \frac{1}{p_{-}}\right) >1. \]
This completes the proof in this case.
\end{enumerate}
Hence, we have shown \eqref{eq:aimineqbeta}. Note that by \eqref{eq:vol}
\[ y_0 = \frac{|A_{2\delta H, \frac{R}{2}}^{-}|}{|B_{\frac{R}{2}}|}\leq \tau. \]
Choosing $\tau$ sufficiently small, allows us to apply \Cref{lem:iteration} which yields $y_j\to 0$ as $j\to\infty$ and proves
\eqref{eq:lemgrowth}.

In the remainder of the proof, we show \eqref{eq:vol}. We prove this assertion by contradiction.
Hence, suppose that \eqref{eq:vol} is not true, i.e.
\begin{equation}\label{eq:contrad}
|B_{R/2} \cap \lbrace u < 2\delta H \rbrace| > \tau |B_{R/2}|.
\end{equation} 
We split the proof into two cases for $s\in(0,1)$. First, when $s$ is sufficiently large, we prove the assertion using an isoperimetric-type inequality by Cozzi \cite[Proposition 5.1]{Coz17}. Second, in the case of small $s$, the assertion follows by a direct calculation.

Let $\bar{s}$ be the constant coming from the isoperimetric-type inequality \cite[Proposition 5.1]{Coz17} (applied for the constant exponent case $q$ and for $\sigma$) and let $s\in[\bar{s},1)$. For given $\delta$, there is a unique $m\in\N$ such that
\[ 2^{-m-1}\leq \delta < 2^{-m}. \]
We define for $i = 0,\dots,m-1$, $k_i= 2^{-i}H.$ Note that by definition $k_i\in (2\delta H, H]$.
In the following, we check the conditions to apply \cite[Proposition 5.1]{Coz17}. By \eqref{eq:assumption1} and \eqref{eq:contrad}, we get
\[ |B_{R/2} \cap \lbrace (u-k_{i-1})_{-} \leq 2^{-i}H \rbrace| = |B_{R/2} \cap \lbrace u \geq k_i \rbrace| \geq |B_{R/2} \cap \lbrace u \geq H \rbrace|  \geq \gamma |B_{R/2}|   \]
and
\[ |B_{R/2} \cap \lbrace (u-k_{i-1})_{-} \geq 3\cdot 2^{-i-1}H \rbrace| = |B_{R/2} \cap \lbrace u \leq k_{i+1} \rbrace| \geq |B_{R/2} \cap \lbrace u < 2\delta H \rbrace|  \geq \tau |B_{R/2}|  \]
for $i=1,\dots, m-2$.
In order to apply \cite[Proposition 5.1]{Coz17}, it remains to prove that there is a constant $C>0$ such that
\begin{equation}\label{eq:condisoper}
\|(u-k_{i-1})_{-}\|_{L^q(B_{R/2})}^q + R^{\sigma q} [(u-k_{i-1})_{-}]_{W^{\sigma,q}(B_{R/2})}^q \leq C(k_i-k_{i+1})^q R^n.
\end{equation} 
Using the nonnegativity of $u$ in $B_{R}$, we get 
\[ \|(u-k_{i-1})_{-}\|_{L^q(B_{R/2})}^q \leq C k_{i-1}^q R^n. \]
Combining this estimate together with \Cref{lemma:ellest} for $\ell=k_{i-1}$,
\begin{equation} \label{eq:k-k}
\begin{split}
&\|(u-k_{i-1})_{-}\|_{L^q(B_{R/2})}^q +  R^{\sigma q}[(u-k_{i-1})_{-}]_{W^{\sigma,q}(B_{R/2})}^q \\
&\leq Ck_{i-1}^q \max \left\lbrace |A_{k_{i-1}, R}^-|, |A_{k_{i-1}, R}^-|^{1+\frac{q}{p_-}-\frac{q}{p_+}}, |A_{k_{i-1}, R}^-|^{1+\frac{q}{p_+} - \frac{q}{p_-}} \right\rbrace \\
&\leq C(k_{i} - k_{i+1})^q R^n
\end{split}
\end{equation}
for some constant $C=C(n, s, \sigma, p_+(B_R \times \mathbb{R}^n), p_-(B_R \times \mathbb{R}^n), q, L) > 0$, where we used \eqref{eq:P1} in the last inequality. This proves \eqref{eq:condisoper} and therefore, we can apply \cite[Proposition 5.1]{Coz17} with $h=k_{i-1}-k_i$, $k=k_{i-1}-k_{i+1}$, and the function $(u-k_{i-1})_-$, that yields
\begin{equation}\label{eq:aplProp5.1Coz}
\begin{aligned} 
(k_{i}-k_{i+1})& \left[ |B_{R/2}\cap \{ u \geq k_i\}||B_{R/2}\cap \{ u \leq k_{i+1}\}| \right]^{\frac{n-1}{n}} \\
&\leq CR^{n-2+\sigma} [(u-k_{i-1})_{-}]_{W^{\sigma,q}(B_{R/2})} \left|B_{R/2}\cap \{k_{i+1} < u \leq k_i\}\right|^{\frac{q-1}{q}}.
\end{aligned}
\end{equation}
In the following, we show that this inequality leads to a contradiction.
On the one hand, the left-hand side can be estimated with \eqref{eq:assumption1} by
\[ (k_{i}-k_{i+1}) \left[ |B_{R/2}\cap \{ u \geq k_i\}||B_{R/2}\cap \{ u \leq k_{i+1}\}| \right]^{\frac{n-1}{n}} \geq Ck_{i+1}\left[R^n|B_{R/2}\cap \{ u < 2\delta H\}|\right]^{\frac{n-1}{n}}.\]
On the other hand, we can estimate the right-hand side, using \eqref{eq:k-k}, by
\begin{align*}
 R^{n-2+\sigma} [(u-k_{i-1})_{-}]_{W^{\sigma,q}(B_{R/2})}& \left|B_{R/2}\cap \{k_{i+1} < u \leq k_i\}\right|^{\frac{q-1}{q}} \\
 & \leq  CR^{n-2+\frac{n}{q}} k_{i+1} \left|B_{R/2}\cap \{k_{i+1} < u \leq k_i\}\right|^{\frac{q-1}{q}}.
 \end{align*}
Hence, we get from \eqref{eq:aplProp5.1Coz}
\[ |B_{R/2}\cap \{ u < 2\delta H\}|^{\frac{q(n-1)}{(q-1)n}}\leq CR^{\frac{n-q}{q-1}}\left|B_{R/2}\cap \{k_{i+1} < u \leq k_i\}\right|. \]
Summing up this inequality over $i=1,\dots,m-2$ gives us
\[ (m-2)\left[|B_{R/2}\cap \{ u < 2\delta H\}|\right]^{\frac{q(n-1)}{(q-1)n}}\leq CR^{\frac{n-q}{q-1}}\left|B_{R/2}\right| = CR^{\frac{q(n-1)}{q-1}}, \]
which leads to
\[ |B_{R/2}\cap \{ u < 2\delta H\}|\leq CR^{n}m^{-\frac{n(q-1)}{(n-1)q}} \leq C|B_{R/2}| |\log \delta|^{-\frac{n(q-1)}{(n-1)q}}. \]
Estimating the left-hand side by \eqref{eq:contrad}, we get
\[ |\log \delta|^{-\frac{n(q-1)}{(n-1)q}} \geq C. \]
Hence, choosing $\delta$ sufficiently small results in a contradiction and finishes the proof for the case $s\in [\bar{s},1)$. 

Now let $s\in (0,\bar{s})$. In this case, we get by \eqref{eq:modular}, \eqref{eq:assumption1}, and \eqref{eq:contrad}
\begin{align*}
&\left((4\delta H)^{p_+} + (4\delta H)^{p_{-}}\right) R^{n-sp_{+}} \geq C\int_{B_{R/2}}\int_{B_{R/2}} \frac{(u(x)-4\delta H)_{+}^{p(x,y)-1} (u(y)-4\delta H)_{-}}{|x-y|^{n+sp(x,y)}}\, \d y\, \d x \\
& \geq \frac{C}{R^{n+sp_{-}}} \int_{B_{R/2}\cap \{u\geq H\}} (u(x)-4\delta H)^{p(x,y)-1} \, \d x \int_{B_{R/2}\cap \{u<2\delta H\}} (4\delta H-u(y)) \d y \\
& \geq \frac{C}{R^{n+sp_{-}}} |B_{R/2}\cap \{u\geq H\}| \min\left\lbrace \left( \frac{H}{2}\right)^{p_+-1}, \left( \frac{H}{2}\right)^{p_--1} \right\rbrace 2\delta H |B_{R/2}\cap \{u<2\delta H\}| \\
& \geq C R^{n-sp_{-}} \delta \min\left\{ H^{p_+}, H^{p_-}\right\}.
\end{align*}
Hence, since by $R^{sp_{-}-sp_{+}}\leq L^s$ by \eqref{eq:P1}, we get
\[ \left((4\delta H)^{p_+} + (4\delta H)^{p_{-}}\right) \geq C \delta \min\left\{ H^{p_+}, H^{p_-}\right\}.\]
Choosing $\delta$ sufficiently small leads to a contradiction in this inequality and finishes the proof of the lemma.
\end{proof}
We would like to emphasize that we made use of \Cref{lem:embedding} in the foregoing proof. For this reason we were able to prove the growth lemma without using the Sobolev inequality for variable exponents. It was sufficient to make use the fractional Sobolev inequality for constant exponents. 

\begin{proof} [Proof of \Cref{thm:Holder}]
Let $x_0\in\R^n$. If $p(x_0, x_0) > n/s$, then we can find $R > 0$ and $\alpha \in (0,1)$ such that $B_R(x_0) \Subset \Omega$ and $u \in C^\alpha(\overline{B_{R}(x_0)})$ as in the proof of \Cref{thm:local_boundedness}. Thus, let us assume $p(x_0, x_0) \leq n/s$ in the rest of the proof. In this case, for given $\sigma \in (0,s)$ we can find $R \in (0,1)$ such that $B_R(x_0) \Subset \Omega$ and $p_+(B_R(x_0)\times B_R(x_0)) < p_-^{\ast}(B_R(x_0)\times B_R(x_0))$, where $p_-^{\ast}(B_R(x_0)\times B_R(x_0))= \frac{np_-(B_R(x_0)\times B_R(x_0))}{n-\sigma p_-(B_R(x_0)\times B_R(x_0))}$. By \Cref{thm:local_boundedness}, $u\in L^{\infty}(B_R(x_0))$.

Let $\delta\in(0,1)$ be the constant from \Cref{lem:growth} and let 
\begin{equation}\label{eq:alphabounds}
0<\alpha < \min\left\{ s,\log_4\left(\frac{2}{2-\delta}\right), \frac{sp_{+}(\Omega\times\R^n)}{2(p_{+}(\Omega\times\R^n)-1)}\right\}
\end{equation}
be chosen such that the following is satisfied:
\begin{equation}\label{eq:propalpha1}
\int^{\infty}_1 \frac{((4t)^{\alpha} - 1)^{p_{+}(\Omega\times \R^n)-1}}{t^{1+sp_{+}(\Omega\times\R^n)}}\, \d t + \int^{\infty}_1 \frac{((4t)^{\alpha} - 1)^{p_{-}(\Omega\times \R^n)-1}}{t^{1+sp_{-}(\Omega\times\R^n)}}\, \d t \leq \frac{\delta^{p_{+}(\Omega\times\R^n)-1}}{2^{p_{+}(\Omega\times\R^n)}n\omega_n},
\end{equation}
where $\omega_n$ denotes the volume the $n$-dimensional Euclidean unit ball.
We define $j_0\in\N$ to be the smallest natural number satisfying
\begin{equation}\label{eq:j0}
\begin{aligned}
j_0\geq \max\Bigg\{ &\frac{sp_{+}(\lbrace x_0 \rbrace \times B_R^c)}{2}\left|\log_4\left(\frac{\delta^{p_{+}(\lbrace x_0 \rbrace \times B_R^c)-1}}{2C_0}\right) \right|, \\
&\frac{sp_{-}(\Omega \times \R^n)}{2}\left|\log_4\left(\frac{\delta^{p_{-}(\Omega \times \R^n)-1}}{2C_0}\right) \Bigg|, \frac{|\log_4 (\frac{\delta}{2})|}{s-\alpha} \right\},
\end{aligned}
\end{equation}
where $C_0:=\max\{1,2^{p_{+}(\Omega\times\R^n)}\}\left(\frac{n\omega_n}{sp_{-}(\Omega\times\R^n)}+1\right)$.

In the following we show that there is a non-increasing sequence $(M_j)$ and a non-decreasing sequence $(m_j)$ in $\R$, such that for all $j\in\N\cup\{0\}$ 
\begin{equation}\label{eq:sequencesMjmj}
m_j \leq u \leq M_j \quad \text{in } B_{4^{-j}R}(x_0) \qquad \text{and} \qquad M_j - m_j = Z4^{-\alpha j},
\end{equation}
where 
\begin{align*}
Z&:= 2\cdot 4^{\alpha j_0} \|u\|_{L^{\infty}(B_R(x_0))} + R^s + 1 \\
& \quad + \left(R^{sp_{+}(\{x_0\}\times B_R(x_0)^c)} \sup_{x \in B_{3R/4}(x_0)} \int_{\mathbb{R}^n \setminus B_R(x_0)} \frac{|u(y)|^{p(x,y)-1}}{|y-x_0|^{n+sp(x,y)}} \,\mathrm{d}y\right)^{\frac{1}{p_{+}(\{x_0\}\times B_R(x_0)^c)-1}} . 
\end{align*}
For $j\in \{0,\dots,j_0\}$, we define $M_j:=4^{-\alpha j} Z/2$ and $m_j:=-4^{-\alpha j} Z/2$. Then \eqref{eq:sequencesMjmj} is clearly satisfied for all $j\in \{0,\dots,j_0\}$. It remains to prove the assertion \eqref{eq:sequencesMjmj} for $j > j_0$. 
The proof of the assertion follows by induction. Let us fix $j\geq j_0$ and assume that \eqref{eq:sequencesMjmj} is true for all $i\in \{0,\dots,j\}$. We now construct the elements $m_{j+1}$ and $M_{j+1}$ of the sequences. We distinguish between two cases.  

First, we assume 
\begin{equation}\label{eq:Hoeldcase1}
 \left| B_{\frac{4^{-j}R}{2}}(x_0) \cap \left\{ u\geq m_{j} + \frac{M_j-m_j}{2} \right\} \right| \geq \frac{1}{2}\left| B_{\frac{4^{-j}R}{2}}(x_0) \right|. 
\end{equation}
In this case, we define $v:=u-m_j$, $H:=\frac{M_j-m_j}{2}$ and $\widetilde{R}:=4^{-j}R$. \\
The main idea for constructing $m_{j+1}$ and $M_{j+1}$ is to apply \Cref{lem:growth} for the function $v$ and the radius $\widetilde{R}$. Hence, we need to verify the requirements of the lemma.
Note that by assumption we have $0\leq v\leq 2H$ in $B_{\widetilde{R}}(x_0)$ and  $| B_{\widetilde{R}/2}(x_0) \cap \left\{ v\geq H \right\} | \geq \frac{1}{2}| B_{\widetilde{R}/2}(x_0)|.$ It remains to prove $\widetilde{R}^s\leq \delta H$ and \eqref{eq:assumption_tail2}. First we show $\widetilde{R}^s\leq \delta H$. Note that $2H = M_j - m_j = Z4^{-\alpha j} \geq R^s 4^{-\alpha j}$. Since $j\geq j_0$, we can use \eqref{eq:j0}, which leads to
\[ \widetilde{R}^s = 4^{-js} R^s \leq  4^{j(\alpha-s)}2H \leq \delta H.\]
It remains to prove \eqref{eq:assumption_tail2}. We split $\R^n\setminus B_{\widetilde{R}}(x_0)$ as follows:
\[ \R^n\setminus B_{\widetilde{R}}(x_0) = \left(\R^n\setminus B_R(x_0)\right) \cup \left(\bigcup_{l=0}^{j-1} B_{4^{-l}R}(x_0) \setminus B_{4^{-(l+1)}R}(x_0)\right). \]
If $x\in B_{4^{-l}R}(x_0) \setminus B_{4^{-(l+1)}R}(x_0)$, then $|x-x_0|\geq 4^{-l-1}R$ and therefore
\begin{align*}
v(x) &= u(x)-m_j \geq m_l-M_l + 2H  = 2H(-4^{(-l+j)\alpha}+1) \\
& \geq -2H\left(\left( \frac{4|x-x_0|}{\widetilde{R}} \right)^{\alpha}-1\right). 
\end{align*}
On the other hand, if $x\in \R^n\setminus B_R(x_0)$, we have $v(x) \geq -|u(x)|-Z/2$.
 Now we are in a position to finalize the verification of \eqref{eq:assumption_tail2}. By the previous estimates on $v$ in $\R^n\setminus B_{\widetilde{R}}(x_0)$, we have
\begin{align*}
&\sup_{x \in B_{3\widetilde{R}/4}(x_0)} \int_{\mathbb{R}^n \setminus B_{\widetilde{R}}(x_0)} \frac{v_-(y)^{p(x,y)-1}}{|y-x_0|^{n+sp(x,y)}} \,\mathrm{d}y \\
&\leq \sup_{x \in B_{3\widetilde{R}/4}(x_0)} \int_{\R^n \setminus B_{\widetilde{R}}(x_0)} \frac{\left(2H\left(\left( \frac{4|y-x_0|}{\widetilde{R}} \right)^{\alpha}-1\right)\right)^{p(x,y)-1} }{|y-x_0|^{n+sp(x,y)}} \,\mathrm{d}y  \\
&\qquad +\max\{1,2^{p_+(B_R(x_0)\times\R^n)-1}\}\sup_{x \in B_{3\widetilde{R}/4}(x_0)} \int_{\mathbb{R}^n \setminus B_{R}(x_0)} \frac{|u(y)|^{p(x,y)-1}+Z^{p(x,y)-1}}{|y-x_0|^{n+sp(x,y)}} \,\mathrm{d}y\\
&  =:J_1+J_2.
\end{align*}
First note, that we can estimate $J_1$ as follows:
\begin{align*}
J_1&\leq \int_{\mathbb{R}^n \setminus B_{\widetilde{R}}(x_0)} \,\frac{\left(2H\left(\left( \frac{4|y-x_0|}{\widetilde{R}} \right)^{\alpha}-1\right)\right)^{p_+(B_{\widetilde{R}}(x_0) \times B_{\widetilde{R}}(x_0)^c)-1}}{|y-x_0|^{n+sp_+(B_{\widetilde{R}}(x_0) \times B_{\widetilde{R}}(x_0)^c)}} \, \d y\\
&\quad + \int_{\mathbb{R}^n \setminus B_{\widetilde{R}}(x_0)} \frac{\left(2H\left(\left( \frac{4|y-x_0|}{\widetilde{R}} \right)^{\alpha}-1\right)\right)^{p_-(B_{\widetilde{R}}(x_0) \times B_{\widetilde{R}}(x_0)^c)-1}}{|y-x_0|^{n+sp_-(B_{\widetilde{R}}(x_0) \times B_{\widetilde{R}}(x_0)^c)}}\, \d y\\
& = n\omega_n (2H)^{p_+(B_{\widetilde{R}}(x_0) \times B_{\widetilde{R}}(x_0)^c)-1} \widetilde{R}^{-sp_+(B_{\widetilde{R}}(x_0) \times B_{\widetilde{R}}(x_0)^c)}\int^{\infty}_1 \frac{((4t)^{\alpha} - 1)^{p_{+}(B_{\widetilde{R}}(x_0)\times B_{\widetilde{R}}(x_0)^c)-1}}{t^{1+sp_{+}(B_{\widetilde{R}}(x_0)\times B_{\widetilde{R}}(x_0)^c)}}\, \d t \\
& \quad + n\omega_n (2H)^{p_-(B_{\widetilde{R}}(x_0) \times B_{\widetilde{R}}(x_0)^c)-1} \widetilde{R}^{-sp_-(B_{\widetilde{R}}(x_0) \times B_{\widetilde{R}}(x_0)^c)}\int^{\infty}_1 \frac{((4t)^{\alpha} - 1)^{p_{-}(B_{\widetilde{R}}(x_0)\times B_{\widetilde{R}}(x_0)^c)-1}}{t^{1+sp_{-}(B_{\widetilde{R}}(x_0)\times B_{\widetilde{R}}(x_0)^c)}}\, \d t.
\end{align*}
Using $p_{-}(\Omega\times\R^n)\leq p_{\pm}(B_{\widetilde{R}}(x_0)\times B_{\widetilde{R}}(x_0)^c) \leq p_{+}(\Omega\times\R^n)$ and \eqref{eq:propalpha1}, we get 
\begin{align*}
\int^{\infty}_1& \frac{((4t)^{\alpha} - 1)^{p_{\pm}(B_{\widetilde{R}}(x_0)\times B_{\widetilde{R}}(x_0)^c)-1}}{t^{1+sp_{\pm}(B_{\widetilde{R}}(x_0)\times B_{\widetilde{R}}(x_0)^c)}}\, \d t \\
&\leq \int^{\infty}_1 \frac{((4t)^{\alpha} - 1)^{p_{+}(\Omega\times \R^n)-1}}{t^{1+sp_{+}(\Omega\times \R^n)}}\, \d t
+ \int^{\infty}_1 \frac{((4t)^{\alpha} - 1)^{p_{-}(\Omega\times \R^n)-1}}{t^{1+sp_{-}(\Omega\times \R^n)}}\, \d t \\
& \leq \frac{\delta^{p_{+}(\Omega\times\R^n)-1}}{2^{p_{+}(\Omega\times\R^n)}n\omega_n}.
\end{align*}
Combing the previous two estimates, we arrive at 
\begin{align*}
J_1 & \leq  \frac12 H^{p_+(B_{\widetilde{R}}(x_0) \times B_{\widetilde{R}}(x_0)^c)-1} \widetilde{R}^{-sp_+(B_{\widetilde{R}}(x_0) \times B_{\widetilde{R}}(x_0)^c)} \delta^{p_{+}(\Omega\times\R^n)-1} \\
& \quad +\frac12 H^{p_-(B_{\widetilde{R}}(x_0) \times B_{\widetilde{R}}(x_0)^c)-1} \widetilde{R}^{-sp_-(B_{\widetilde{R}}(x_0) \times B_{\widetilde{R}}(x_0)^c)}\delta^{p_{+}(\Omega\times\R^n)-1} \\
& \leq \frac12 (\delta H)^{p_+(B_{\widetilde{R}}(x_0) \times B_{\widetilde{R}}(x_0)^c)-1} \widetilde{R}^{-sp_+(B_{\widetilde{R}}(x_0) \times B_{\widetilde{R}}(x_0)^c)}  \\
& \quad +\frac12 (\delta H)^{p_-(B_{\widetilde{R}}(x_0) \times B_{\widetilde{R}}(x_0)^c)-1} \widetilde{R}^{-sp_-(B_{\widetilde{R}}(x_0) \times B_{\widetilde{R}}(x_0)^c)} \\
& \leq \frac12 (\delta H)^{p_+(B_{\widetilde{R}}(x_0) \times B_{\widetilde{R}}(x_0))-1} \widetilde{R}^{-sp_+(B_{\widetilde{R}}(x_0) \times B_{\widetilde{R}}(x_0))}  \\
& \quad +\frac12 (\delta H)^{p_-(B_{\widetilde{R}}(x_0) \times B_{\widetilde{R}}(x_0))-1} \widetilde{R}^{-sp_-(B_{\widetilde{R}}(x_0) \times B_{\widetilde{R}}(x_0))}.
\end{align*}
In the last inequality we used that by \eqref{eq:P2} and $\widetilde{R}^s\leq \delta H$,
\[  R^{-s p_{\pm}(B_R(x_0)\times B_R(x_0)^c)} (\delta H)^{p_{\pm}(B_R(x_0)\times B_R(x_0)^c)}   \leq R^{-s p_{\pm}(B_R(x_0)\times B_R(x_0))} (\delta H)^{p_{\pm}(B_R(x_0)\times B_R(x_0))}. \]
Next, we estimate $J_2$ as follows:
\begin{equation}\label{eq:estJ2}
\begin{aligned}
J_2 &\leq \max\{1,2^{p_+(\Omega\times\R^n)-1}\} R^{-sp_{+}(\{x_0\}\times B_R(x_0)^c)}Z^{p_{+}(\{x_0\}\times B_R(x_0)^c)-1} \\
&\quad + \max\{1,2^{p_+(\Omega\times\R^n)-1}\}  \\
&\qquad \times \int_{\mathbb{R}^n \setminus B_{R}(x_0)} \left(\frac{Z^{p_{+}(B_{\widetilde{R}}(x_0)\times B_{\widetilde{R}}(x_0)^c)-1}}{|y-x_0|^{n+sp_{+}(B_{\widetilde{R}}(x_0)\times B_{\widetilde{R}}(x_0)^c)}}  + \frac{Z^{p_{-}(B_{\widetilde{R}}(x_0)\times B_{\widetilde{R}}(x_0)^c)-1}}{|y-x_0|^{n+sp_{-}(B_{\widetilde{R}}(x_0)\times B_{\widetilde{R}}(x_0)^c)}} \right)\,\mathrm{d}y \\
& \leq C_0R^{-sp_{+}(B_{\widetilde{R}}(x_0)\times B_{\widetilde{R}}(x_0)^c)}Z^{p_{+}(B_{\widetilde{R}}(x_0)\times B_{\widetilde{R}}(x_0)^c)-1} \\
& \quad + C_0R^{-sp_{-}(B_{\widetilde{R}}(x_0)\times B_{\widetilde{R}}(x_0)^c)}Z^{p_{-}(B_{\widetilde{R}}(x_0)\times B_{\widetilde{R}}(x_0)^c)-1} \\
& =C_0(4^{j}\widetilde{R})^{-sp_{+}(B_{\widetilde{R}}(x_0)\times B_{\widetilde{R}}(x_0)^c)}(4^{\alpha j}2H)^{p_{+}(B_{\widetilde{R}}(x_0)\times B_{\widetilde{R}}(x_0)^c)-1} \\
& \quad + C_0(4^{j}\widetilde{R})^{-sp_{-}(B_{\widetilde{R}}(x_0)\times B_{\widetilde{R}}(x_0)^c)}(4^{\alpha j}2H)^{p_{-}(B_{\widetilde{R}}(x_0)\times B_{\widetilde{R}}(x_0)^c)-1} \\
& \leq C_04^{\frac{-sp_{+}(B_{\widetilde{R}}(x_0)\times B_{\widetilde{R}}(x_0)^c)}{2}j_0}H^{p_{+}(B_{\widetilde{R}}(x_0)\times B_{\widetilde{R}}(x_0)^c)-1}\widetilde{R}^{-sp_{+}(B_{\widetilde{R}}(x_0)\times B_{\widetilde{R}}(x_0)^c)} \\
& \quad +C_04^{\frac{-sp_{-}(B_{\widetilde{R}}(x_0)\times B_{\widetilde{R}}(x_0)^c)}{2}j_0}H^{p_{-}(B_{\widetilde{R}}(x_0)\times B_{\widetilde{R}}(x_0)^c)-1}\widetilde{R}^{-sp_{-}(B_{\widetilde{R}}(x_0)\times B_{\widetilde{R}}(x_0)^c)} \\
& \leq \frac{1}{2}(\delta H)^{p_+(B_{\widetilde{R}}(x_0)\times B_{\widetilde{R}}(x_0)^c) - 1} \widetilde{R}^{-sp_+(B_{\widetilde{R}}(x_0)\times B_{\widetilde{R}}(x_0)^c)}\\
&\quad + \frac{1}{2}(\delta H)^{p_-(B_{\widetilde{R}}(x_0)\times B_{\widetilde{R}}(x_0)^c)-1} \widetilde{R}^{-sp_-(B_{\widetilde{R}}(x_0)\times B_{\widetilde{R}}(x_0)^c)} \\
& \leq \frac{1}{2}(\delta H)^{p_+(B_{\widetilde{R}}(x_0)\times B_{\widetilde{R}}(x_0)) - 1} \widetilde{R}^{-sp_+(B_{\widetilde{R}}(x_0)\times B_{\widetilde{R}}(x_0))} \\
&\quad +\frac{1}{2}(\delta H)^{p_-(B_{\widetilde{R}}(x_0)\times B_{\widetilde{R}}(x_0))-1} \widetilde{R}^{-sp_-(B_{\widetilde{R}}(x_0)\times B_{\widetilde{R}}(x_0))},
\end{aligned}
\end{equation}
where we used the definition of $Z$, \eqref{eq:P2}, \eqref{eq:alphabounds} and \eqref{eq:j0}. 
Note that the constant $C_0$ comes from \eqref{eq:j0}. Combining the estimates of $J_1$ and $J_2$, proves \eqref{eq:assumption_tail2}.

Hence, we can apply \Cref{lem:growth}, which leads to
\[ u\geq m_j+\delta H = m_j+\delta \frac{M_{j}-m_j}{2} = m_j + \frac{\delta 4^{-\alpha j}Z}{2} > m_j+4^{-\alpha j}(1-4^{-\alpha})Z \quad \text{in } B_{\widetilde{R}/4}(x_0), \]
where we used \eqref{eq:alphabounds} in the last inequality. 
Hence, choosing $M_{j+1} = M_j$ and $m_{j+1}=m_j+4^{-\alpha j}(1-4^{-\alpha})Z$ proves \eqref{eq:sequencesMjmj} for the case \eqref{eq:Hoeldcase1}.

In the second case
\[ \left| B_{\frac{4^{-j}R}{2}}(x_0) \cap \left\{ u\geq m_{j} + \frac{M_j-m_j}{2} \right\} \right| < \frac{1}{2}\left| B_{\frac{4^{-j}R}{2}}(x_0) \right|, \]
we can proceed similarly and consider the function $v:=M_j-u$. In this case, we can choose the members of the sequences to be of the form $M_{j+1} = M_j-4^{-\alpha j}(1-4^{-\alpha})Z$ and $m_{j+1}=m_j$. This completes the construction of the sequences $(M_j)$ and $(m_j)$ and completes the proof of \eqref{eq:sequencesMjmj}. Now the local Hölder regularity follows in a standard way.
\end{proof}


\begin{thebibliography}{10}

\bibitem{ABF03}
E.~Acerbi, G.~Bouchitt\'{e}, and I.~Fonseca.
\newblock Relaxation of convex functionals: the gap problem.
\newblock {\em Ann. Inst. H. Poincar\'{e} Anal. Non Lin\'{e}aire},
  20(3):359--390, 2003.

\bibitem{AF94}
E.~Acerbi and N.~Fusco.
\newblock A transmission problem in the calculus of variations.
\newblock {\em Calc. Var. Partial Differential Equations}, 2(1):1--16, 1994.

\bibitem{AM01}
E.~Acerbi and G.~Mingione.
\newblock Regularity results for a class of functionals with non-standard
  growth.
\newblock {\em Arch. Ration. Mech. Anal.}, 156(2):121--140, 2001.

\bibitem{AHKC19}
K.~B. Ali, M.~Hsini, K.~Kefi, and N.~T. Chung.
\newblock On a nonlocal fractional {$p(\cdot,\cdot)$}-{L}aplacian problem with
  competing nonlinearities.
\newblock {\em Complex Anal. Oper. Theory}, 13(3):1377--1399, 2019.

\bibitem{Alk97}
Y.~A. Alkhutov.
\newblock The {H}arnack inequality and the {H}\"{o}lder property of solutions
  of nonlinear elliptic equations with a nonstandard growth condition.
\newblock {\em Differ. Uravn.}, 33(12):1651--1660, 1726, 1997.

\bibitem{ASSA21}
R.~Ayazoglu, Y.~Sara\c{c}, S.~\c{S}. \c{S}ener, and G.~Alisoy.
\newblock Existence and multiplicity of solutions for a
  {S}chr\"{o}dinger-{K}irchhoff type equation involving the fractional
  {$p(.,.)$}-{L}aplacian operator in {$\Bbb R^N$}.
\newblock {\em Collect. Math.}, 72(1):129--156, 2021.

\bibitem{ABS19}
E.~Azroul, A.~Benkirane, and M.~Shimi.
\newblock Eigenvalue problems involving the fractional {$p(x)$}-{L}aplacian
  operator.
\newblock {\em Adv. Oper. Theory}, 4(2):539--555, 2019.

\bibitem{ABS20}
E.~Azroul, A.~Benkirane, and M.~Shimi.
\newblock General fractional {S}obolev space with variable exponent and
  applications to nonlocal problems.
\newblock {\em Adv. Oper. Theory}, 5(4):1512--1540, 2020.

\bibitem{ABSS21}
E.~Azroul, A.~Benkirane, M.~Shimi, and M.~Srati.
\newblock On a class of fractional {$p(x)$}-{K}irchhoff type problems.
\newblock {\em Appl. Anal.}, 100(2):383--402, 2021.

\bibitem{BB19}
A.~Baalal and M.~Berghout.
\newblock Density properties for fractional {S}obolev spaces with variable
  exponents.
\newblock {\em Ann. Funct. Anal.}, 10(3):308--324, 2019.

\bibitem{Bah18}
A.~Bahrouni.
\newblock Comparison and sub-supersolution principles for the fractional
  {$p(x)$}-{L}aplacian.
\newblock {\em J. Math. Anal. Appl.}, 458(2):1363--1372, 2018.

\bibitem{BR18}
A.~Bahrouni and V.~D. R\u{a}dulescu.
\newblock On a new fractional {S}obolev space and applications to nonlocal
  variational problems with variable exponent.
\newblock {\em Discrete Contin. Dyn. Syst. Ser. S}, 11(3):379--389, 2018.

\bibitem{BRW20}
A.~Bahrouni, V.~D. R\u{a}dulescu, and P.~Winkert.
\newblock Robin fractional problems with symmetric variable growth.
\newblock {\em J. Math. Phys.}, 61(10):101503, 14, 2020.

\bibitem{BT21}
R.~Biswas and S.~Tiwari.
\newblock Variable order nonlocal {C}hoquard problem with variable exponents.
\newblock {\em Complex Var. Elliptic Equ.}, 66(5):853--875, 2021.

\bibitem{BA20}
A.~Boumazourh and E.~Azroul.
\newblock On a class of fractional systems with nonstandard growth conditions.
\newblock {\em J. Pseudo-Differ. Oper. Appl.}, 11(2):805--820, 2020.

\bibitem{BP16}
L.~Brasco and E.~Parini.
\newblock The second eigenvalue of the fractional {$p$}-{L}aplacian.
\newblock {\em Adv. Calc. Var.}, 9(4):323--355, 2016.

\bibitem{Bre12}
D.~Breit.
\newblock New regularity theorems for non-autonomous variational integrals with
  {$(p,q)$}-growth.
\newblock {\em Calc. Var. Partial Differential Equations}, 44(1-2):101--129,
  2012.

\bibitem{CCV11}
L.~Caffarelli, C.~H. Chan, and A.~Vasseur.
\newblock Regularity theory for parabolic nonlinear integral operators.
\newblock {\em J. Amer. Math. Soc.}, 24(3):849--869, 2011.

\bibitem{CGA20}
Y.~Cheng, B.~Ge, and R.~P. Agarwal.
\newblock Variable-order fractional {S}obolev spaces and nonlinear elliptic
  equations with variable exponents.
\newblock {\em J. Math. Phys.}, 61(7):071507, 12, 2020.

\bibitem{CPC97}
V.~Chiad\`o~Piat and A.~Coscia.
\newblock H\"{o}lder continuity of minimizers of functionals with variable
  growth exponent.
\newblock {\em Manuscripta Math.}, 93(3):283--299, 1997.

\bibitem{CM99}
A.~Coscia and G.~Mingione.
\newblock H\"{o}lder continuity of the gradient of {$p(x)$}-harmonic mappings.
\newblock {\em C. R. Acad. Sci. Paris S\'{e}r. I Math.}, 328(4):363--368, 1999.

\bibitem{Coz17}
M.~Cozzi.
\newblock Regularity results and {H}arnack inequalities for minimizers and
  solutions of nonlocal problems: a unified approach via fractional {D}e
  {G}iorgi classes.
\newblock {\em J. Funct. Anal.}, 272(11):4762--4837, 2017.

\bibitem{Coz19}
M.~Cozzi.
\newblock Fractional {D}e {G}iorgi classes and applications to nonlocal
  regularity theory.
\newblock In {\em Contemporary research in elliptic {PDE}s and related topics},
  volume~33 of {\em Springer INdAM Ser.}, pages 277--299. Springer, Cham, 2019.

\bibitem{CUF13}
D.~V. Cruz-Uribe and A.~Fiorenza.
\newblock {\em Variable {L}ebesgue spaces}.
\newblock Applied and Numerical Harmonic Analysis. Birkh\"{a}user/Springer,
  Heidelberg, 2013.
\newblock Foundations and harmonic analysis.

\bibitem{DPR17}
L.~M. Del~Pezzo and J.~D. Rossi.
\newblock Traces for fractional {S}obolev spaces with variable exponents.
\newblock {\em Adv. Oper. Theory}, 2(4):435--446, 2017.

\bibitem{DCKP14}
A.~Di~Castro, T.~Kuusi, and G.~Palatucci.
\newblock Nonlocal {H}arnack inequalities.
\newblock {\em J. Funct. Anal.}, 267(6):1807--1836, 2014.

\bibitem{DCKP16}
A.~Di~Castro, T.~Kuusi, and G.~Palatucci.
\newblock Local behavior of fractional {$p$}-minimizers.
\newblock {\em Ann. Inst. H. Poincar\'{e} Anal. Non Lin\'{e}aire},
  33(5):1279--1299, 2016.

\bibitem{DNPV12}
E.~Di~Nezza, G.~Palatucci, and E.~Valdinoci.
\newblock Hitchhiker's guide to the fractional {S}obolev spaces.
\newblock {\em Bull. Sci. Math.}, 136(5):521--573, 2012.

\bibitem{Die04}
L.~Diening.
\newblock Maximal function on generalized {L}ebesgue spaces {$L^{p(\cdot)}$}.
\newblock {\em Math. Inequal. Appl.}, 7(2):245--253, 2004.

\bibitem{DHHR11}
L.~Diening, P.~Harjulehto, P.~H\"{a}st\"{o}, and M.~R{\r u}\v{z}i\v{c}ka.
\newblock {\em Lebesgue and {S}obolev spaces with variable exponents}, volume
  2017 of {\em Lecture Notes in Mathematics}.
\newblock Springer, Heidelberg, 2011.

\bibitem{DHR09}
L.~Diening, P.~H\"{a}st\"{o}, and S.~Roudenko.
\newblock Function spaces of variable smoothness and integrability.
\newblock {\em J. Funct. Anal.}, 256(6):1731--1768, 2009.

\bibitem{DZZ20}
M.~Ding, C.~Zhang, and S.~Zhou.
\newblock On optimal {$C^{1,\alpha}$} estimates for {$p(x)$}-{L}aplace type
  equations.
\newblock {\em Nonlinear Anal.}, 200:112030, 14, 2020.

\bibitem{DK20}
B.~Dyda and M.~Kassmann.
\newblock Regularity estimates for elliptic nonlocal operators.
\newblock {\em Anal. PDE}, 13(2):317--370, 2020.

\bibitem{FZ99}
X.~Fan and D.~Zhao.
\newblock A class of {D}e {G}iorgi type and {H}\"{o}lder continuity.
\newblock {\em Nonlinear Anal.}, 36(3, Ser. A: Theory Methods):295--318, 1999.

\bibitem{FZ01}
X.~Fan and D.~Zhao.
\newblock On the spaces {$L^{p(x)}(\Omega)$} and {$W^{m,p(x)}(\Omega)$}.
\newblock {\em J. Math. Anal. Appl.}, 263(2):424--446, 2001.

\bibitem{FK13}
M.~Felsinger and M.~Kassmann.
\newblock Local regularity for parabolic nonlocal operators.
\newblock {\em Comm. Partial Differential Equations}, 38(9):1539--1573, 2013.

\bibitem{HHLN10}
P.~Harjulehto, P.~H\"{a}st\"{o}, U.~V. L\^{e}, and M.~Nuortio.
\newblock Overview of differential equations with non-standard growth.
\newblock {\em Nonlinear Anal.}, 72(12):4551--4574, 2010.

\bibitem{HKLMP08}
P.~Harjulehto, T.~Kuusi, T.~Lukkari, N.~Marola, and M.~Parviainen.
\newblock Harnack's inequality for quasiminimizers with nonstandard growth
  conditions.
\newblock {\em J. Math. Anal. Appl.}, 344(1):504--520, 2008.

\bibitem{HK19}
K.~Ho and Y.-H. Kim.
\newblock A-priori bounds and multiplicity of solutions for nonlinear elliptic
  problems involving the fractional {$p(\cdot)$}-{L}aplacian.
\newblock {\em Nonlinear Anal.}, 188:179--201, 2019.

\bibitem{HK21}
K.~Ho and Y.-H. Kim.
\newblock The concentration-compactness principles for
  {$W^{s,p(\cdot,\cdot)}(\Bbb R^N)$} and application.
\newblock {\em Adv. Nonlinear Anal.}, 10(1):816--848, 2021.

\bibitem{Kas09}
M.~Kassmann.
\newblock A priori estimates for integro-differential operators with measurable
  kernels.
\newblock {\em Calc. Var. Partial Differential Equations}, 34(1):1--21, 2009.

\bibitem{KS14}
M.~Kassmann and R.~W. Schwab.
\newblock Regularity results for nonlocal parabolic equations.
\newblock {\em Riv. Math. Univ. Parma (N.S.)}, 5(1):183--212, 2014.

\bibitem{KRV17}
U.~Kaufmann, J.~D. Rossi, and R.~Vidal.
\newblock Fractional {S}obolev spaces with variable exponents and fractional
  {$p(x)$}-{L}aplacians.
\newblock {\em Electron. J. Qual. Theory Differ. Equ.}, pages Paper No. 76, 10,
  2017.

\bibitem{KR91}
O.~Kov\'{a}\v{c}ik and J.~R\'{a}kosn\'{\i}k.
\newblock On spaces {$L^{p(x)}$} and {$W^{k,p(x)}$}.
\newblock {\em Czechoslovak Math. J.}, 41(116)(4):592--618, 1991.

\bibitem{KMS15}
T.~Kuusi, G.~Mingione, and Y.~Sire.
\newblock Nonlocal self-improving properties.
\newblock {\em Anal. PDE}, 8(1):57--114, 2015.

\bibitem{Mar89}
P.~Marcellini.
\newblock Regularity of minimizers of integrals of the calculus of variations
  with nonstandard growth conditions.
\newblock {\em Arch. Rational Mech. Anal.}, 105(3):267--284, 1989.

\bibitem{Mar91}
P.~Marcellini.
\newblock Regularity and existence of solutions of elliptic equations with
  {$p,q$}-growth conditions.
\newblock {\em J. Differential Equations}, 90(1):1--30, 1991.

\bibitem{Ok18}
J.~Ok.
\newblock Harnack inequality for a class of functionals with non-standard
  growth via {D}e {G}iorgi's method.
\newblock {\em Adv. Nonlinear Anal.}, 7(2):167--182, 2018.

\bibitem{Ok21}
J.~Ok.
\newblock Local {H}\"older regularity for nonlocal equations with
  variable powers.
\newblock {\em arXiv preprint arXiv:2107.06611}, 2021.

\bibitem{ZZ20}
C.~Zhang and X.~Zhang.
\newblock Renormalized solutions for the fractional {$p(x)$}-{L}aplacian
  equation with {$L^1$} data.
\newblock {\em Nonlinear Anal.}, 190:111610, 15, 2020.

\bibitem{Zhi93}
V.~Zhikov.
\newblock Lavrentiev phenomenon and homogenization for some variational
  problems.
\newblock {\em C. R. Acad. Sci. Paris S\'{e}r. I Math.}, 316(5):435--439, 1993.

\bibitem{Zhi86}
V.~V. Zhikov.
\newblock Averaging of functionals of the calculus of variations and elasticity
  theory.
\newblock {\em Izv. Akad. Nauk SSSR Ser. Mat.}, 50(4):675--710, 877, 1986.

\bibitem{Zhi95}
V.~V. Zhikov.
\newblock On {L}avrentiev's phenomenon.
\newblock {\em Russian J. Math. Phys.}, 3(2):249--269, 1995.

\bibitem{Zhi97}
V.~V. Zhikov.
\newblock On some variational problems.
\newblock {\em Russian J. Math. Phys.}, 5(1):105--116 (1998), 1997.

\bibitem{ZAF21}
J.~Zuo, T.~An, and A.~Fiscella.
\newblock A critical {K}irchhoff-type problem driven by a
  {$p(\cdot)$}-fractional {L}aplace operator with variable {$s(\cdot)$}-order.
\newblock {\em Math. Methods Appl. Sci.}, 44(1):1071--1085, 2021.

\end{thebibliography}

\end{document}